\newtheorem{theorem}{Theorem}[section]
\newtheorem{lemma}[theorem]{Lemma}
\newtheorem{proposition}[theorem]{Proposition}
\newtheorem{corollary}[theorem]{Corollary}
\newenvironment{proof}{\normalsize {\sc Proof}:}{{\hfill $\Box$}}
\newenvironment{proofof}[1]{\normalsize {\sc Proof of #1}:}{{\hfill $\Box$}}
\def\margin_comment#1{\marginpar{\sffamily{\small #1\par}\normalfont}}
\def \N {\mathbb{N}}
\def \lexle{<_{\rm lex}}
\def \slexle{<_{\rm slex}}
\def \slexlee{\le_{\rm slex}}
\def \p#1{{\rm pre}[#1]}
\def \s#1{{\rm suf}[#1]}
\def \f#1{{\rm f}[#1]}
\def \l#1{{\rm l}[#1]}
\def \S {\mathcal{S}}
\def \DA{\mathrm{DA}}
\newenvironment{mylist}{\begin{list}{}{
\setlength{\parskip}{0mm}
\setlength{\topsep}{1mm}
\setlength{\parsep}{0mm}
\setlength{\itemsep}{0.5mm}
\setlength{\labelwidth}{7mm}
\setlength{\labelsep}{3mm}
\setlength{\itemindent}{0mm}
\setlength{\leftmargin}{12mm}
\setlength{\listparindent}{6mm}
}}{\end{list}}
\title{Artin groups of large type are shortlex automatic with regular geodesics}
\author{Derek Holt and Sarah Rees}
\date{30th March 2010}
\begin{document}
\maketitle

\begin{abstract}We prove that any Artin group of large type is shortlex automatic
with respect to its standard generating set, and that the set of all geodesic
words over the same generating set satisfies the Falsification by
Fellow-Traveller Property (FFTP) and hence is regular.\end{abstract}

\section{Introduction}

In this article we consider Artin groups of large type,
in their standard presentations.
The standard presentation for an Artin group over its
standard generating set $X =\{a_1,\ldots, a_n\}$ is as
\[ \langle a_1,\ldots ,a_n \mid {}_{m_{ij}}(a_i,a_j)=
{}_{m_{ji}}(a_j,a_i)\quad\hbox{\rm for each}\quad i \neq j \rangle, \]
where the integers $m_{ij}$ are the entries in a Coxeter matrix
(a symmetric $n \times n$ matrix $(m_{ij})$ with entries in
$\N \cup \{\infty\}$, $m_{ii}=1, m_{ij} \geq 2$, $\forall
i \neq j$), and where 
for generators $a,a'$ and $m \in \N$ we define ${}_m(a,a')$ to be the
word that is the product of $m$ alternating $a$'s and $a'$'s that
starts with $a$.
Adding the relations $a_i^2=1$ to those for the Artin group defines the associated Coxeter group,
which is more commonly presented as
\[ \langle a_1,\ldots ,a_n \mid (a_ia_j)^{m_{ij}}=1\quad\hbox{\rm for each}\quad i,j \rangle. \]

An Artin group is said to be of {\em spherical} or {\em finite} type if the associated
Coxeter group is finite, of {\em dihedral type} if the associated
Coxeter group is dihedral (or, equivalently, the standard generator set has two elements),
of {\em large type} if $m_{ij} \geq 3$ for all $i \neq j$,
and of {\em extra-large type} if $m_{ij} \geq 4$ for all $i \neq j$. 

The aim of this paper is to prove that Artin groups of large type are
shortlex automatic over the standard generating set $X$, for any ordering of
$A := X \cup X^{-1}$.
We shall show also that the set of all geodesic words over $A$ satisfies
the Falsification by Fellow-Traveller Property (FFTP)
(see~\cite{NeumannShapiro}), and hence is a regular set.
These two main results appear as Theorem~\ref{Wslred} and Theorem~\ref{fftp}.

We remind the reader that
a group $G=\langle X \rangle$ is defined to be shortlex automatic if the set
of minimal representatives in $G$ of words under the shortlex ordering,
with respect to some ordering of $X \cup X^{-1}$, is a regular language $L$,
and for some constant $k$, any two words
$w,v \in L$ with $|w^{-1}v|_G \leq 1$ `$k$-fellow travel'. Here we use $|u|_G$
to denote the word length of the minimal representative of $u$ in $G$;
words $w,v$ are defined to $k$-fellow travel if,
where $w(i),v(i)$ denote the prefixes of $w,v$ of length $i$, we have
$|w(i)^{-1}v(i)| \leq K$ for each $i=1,\ldots,\max\{|w|,|v|\}$.
An additional fellow traveller property could make the group biautomatic.
We do not attempt to give a complete introduction to this topic, but
refer the reader to \cite{ECHLPT} as a basic reference on automatic groups.

If, for an Artin group, $m_{ij} = \infty$ for all $i \ne j$, then the
group is free.
Since free groups are well understood and are known to be biautomatic,
we shall assume that this is not the case,
and define $M$ to be $2\max\{ m_{ij} \mid m_{ij} \ne \infty \}$.
This will be our fellow traveller constant for automaticity proofs.

It is known that Artin groups of spherical type  
\cite{Charney}, extra-large type \cite{Peifer},
large type with at most three generators \cite{BradyMcCammond}, or
right angled type \cite{HermillerMeier,VanWyk} are
biautomatic. The first two results were each proved by direct construction of
an appropriate regular language,
while the third result was proved
via the verification of appropriate small cancellation conditions on the groups.

Artin groups of spherical type are also known to be Garside,
and the language of geodesics in a Garside group with respect to the
Garside (rather than standard) generators was studied by Charney and Meier (\cite{CharneyMeier}).
The geodesics for 2-generator Artin groups
over the standard generating set were subsequently described by
Mairesse and Math\'eus in \cite{MairesseMatheus}.

The remainder of this paper is divided into three sections.
Section~\ref{dihedral_sec} discusses 2-generator Artin groups, the
structure of their geodesics, and the process of reduction
to them, and proves
Theorems~\ref{artin2} and \ref{2gen_fftp}. These are the
2-generator analogues of Theorems~\ref{Wslred} and \ref{fftp}, but hold 
for all 2-generator Artin groups, without requiring the groups to be of large
type; they are vital components of the higher rank results.
In the final two sections we consider Artin groups of large type.
Section~\ref{shortlex_sec} considers the process that
rewrites a word to shortlex normal form, and proves Theorem~\ref{Wslred},
while Section~\ref{geodesics_sec} is dedicated to the proof of
Theorem~\ref{fftp}. 

{\it Notational Conventions}: We use $a,b$, or $a_1,a_2,\ldots,a_n$ for
the fixed generators of an Artin group,
$X=\{a_1,\ldots,a_n\}$, $A=X\cup X^{-1}$.
We use the shortlex ordering $\slexle$ on $A^*$  relative to some
fixed but arbitrary ordering of $A$; $u \slexle v$ if either $u$ is shorter than $v$ or $u$ and $v$ have the same length but $u$ precedes $v$ lexicographically.
We call elements of $X$ {\em generators}, and elements of the larger set $A$
{\em letters};
a letter is {\it positive} if it is a generator, {\it negative} otherwise.
We define the {\em name} of the letters $a_i$ and $a_i^{-1}$ to be $a_i$.
We say that a word $w \in A^*$ {\em involves} the generator $a_i$ if
$w$ contains a letter with name $a_i$, and we call $w$ a
{\em 2-generator} word if it involves exactly two of the generators.
We shall generally use $x,y,z,t$ for generators in $X$ and
$g,h$ for letters in $A$.
Words in $A^*$ will be denoted by $u,v,w$ (possibly with subscripts) or
$\alpha,\beta,\gamma,\eta, \xi$. (Roughly speaking, the difference is that $u,v,w$
will be used for interesting subwords of a specified word, and the Greek
letters for subwords in which we are not interested.)
A {\it positive word} is one in $X^*$ and a {\it negative word} one in $(X^{-1})^*$; otherwise it is {\it unsigned}.
For $u,v \in A^*$, $u=v$ denotes equality as words, whereas $u=_Gv$ denotes
equality within the Artin group. The length of the word $w$ is denoted by $|w|$,
while as above $|w|_G$ denotes the length of a geodesic representative.

For any distinct letters $x$ and $y$ and a positive integer $r$, we define
alternating products ${}_r(x,y)$ and $(y,x)_r$. The product
${}_r(x,y)$, is defined, as it was earlier, to be the
word of length $r$ of alternating $x$ and $y$ starting
with $x$, while
$(y,x)_r$ is defined to be the word of length $r$ of alternating $x$ and $y$ ending
with $x$.
For example, $_6(x,y) = xyxyxy = (x,y)_6$, $_5(x,y) = xyxyx = 
(y,x)_5$.
We define both $_0(x,y)$ and $(y,x)_0$ to be the empty word.
For any nonempty word $w$, we define $\f{w} $ and $\l{w} $ to be
respectively the first and last letter of $w$, and $\p{w}$  and $\s{w}$ to be
the maximal proper prefix and suffix of $w$. So $w=\p{w} \l{w} =\f{w} \s{w} $.

\section{2-generator Artin groups}
\label{dihedral_sec}
The 2-generator subwords of words over the standard generators of an
Artin group of large type will play a significant role, so we first study
certain aspects of the 2-generator case.

Let \[ \DA_m = \langle a,b \mid {}_m(a,b) = {}_m(b,a) \rangle \] be a
2-generator (dihedral) Artin group with $m \ge 2$.
The element \[ \Delta:={}_m(a,b)=_{\DA_{m}}{}_m(b,a)\]
is called the {\em Garside element}.
If $m$ is even then $\Delta$ is central,
while if $m$ is odd then $a^{\Delta} = b$ and $\Delta^2$ is central.
Conjugation by $\Delta$ induces a permutation $\delta$ of order 2 or 1 on the
letters in $A$, and hence an automorphism $\delta$ of order 2 or 1 of the
free monoid $A^*$.

Let $w$ be a freely reduced word over $A=\{a,b,a^{-1},b^{-1}\}$.
Then we define $p(w)$ to be the minimum of $m$ and the length of
the longest subword of $w$ of alternating $a$'s and $b$'s (that is the
length of the longest subword of $w$ of the form $_r(a,b)$ or $_r(b,a)$).
Similarly, we define $n(w)$ to be the minimum of $m$ and the length of
the longest subword of $w$ of alternating $a^{-1}$'s and $b^{-1}$'s.
It is proved in \cite{MairesseMatheus} that $w$ is geodesic in $\DA_m$ if and only if
$p(w) +n(w) \leq m$.
If $p(w)+n(w) < m$, then $w$ is the unique geodesic representative of the group
element it defines, but if $p(w)+n(w)=m$ then there are other representatives.

For example, consider the case $m=3$ in which
\[ \DA_m= \langle a,b \mid aba=bab \rangle.\]
In this case $aba$ and $bab$ are two geodesic representatives of the
same element with $p(aba)=p(bab)=3,n(aba)=n(bab)=0$.
Less trivially, let $w=ab^2a^{-1}$. 
Then $p(w)=2$, $n(w)=1$, and so $w$ is geodesic. Since $b^{-1}\Delta =_{\DA_m} ab =_{\DA_m} \Delta a^{-1}$ and $\Delta b=_{DA_m} a\Delta$, we see that
\[ w =ab^2a^{-1}= _{\DA_m} b^{-1}\Delta ba^{-1} =_{\DA_m} b^{-1}a\Delta a^{-1} =_{\DA_m} b^{-1}a^2b \]
Based on what we have observed in these two pairs of
geodesic words, we shall identify a set of geodesic words which we shall call
{\em critical} words, and define an involution $\tau$ acting on that
set. The recognition of critical subwords of a word and their replacement by
their images under $\tau$ will turn out to be crucial to the recognition
of words in shortlex normal form, and to the rewriting of
words to that form, both for the dihedral Artin groups that we consider now and
for higher
rank Artin groups of large type.
Critical words $w$ in $\DA_m$
will be non-unique geodesic words (hence freely reduced with $p(w)+n(w)=m$).
From our definition we shall verify the following.

\begin{proposition}
\label{critical_properties}
For any critical word $w$:
\begin{mylist}
\item[(1)] $\tau(w)$ is also critical, it represents the same
element of $\DA_m$ as $w$, and $\tau(\tau(w))=w$.
\item[(2)] $p(\tau(w))=p(w)$ and $n(\tau(w)) = n(w)$.
\item[(3)] The names of the first letters of $w$ and $\tau(w)$ are
distinct, as are the names of the last letters of $w$ and $\tau(w)$.
\item[(4)] The first letters of $w$ and $\tau(w)$ have the same sign if $w$ is
positive or negative, but different signs if $w$ is unsigned; the same is true
of the last letters of $w$ and $\tau(w)$.
\item[(5)] $w$ and $\tau(w)$ $2m$-fellow travel.
\end{mylist}
Furthermore, any freely reduced word $w$ satisfying $p(w)+n(w)\geq m$ must 
contain at least one critical subword. 
\end{proposition}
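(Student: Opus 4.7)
The plan is to first extract from the upcoming definition a concrete description of a critical word and the involution $\tau$: guided by the examples ${}_m(x,y) \mapsto {}_m(y,x)$ and $ab^2a^{-1} \mapsto b^{-1}a^2b$, I expect a critical word to be characterised by a boundary at which a maximal alternating positive block of length $p$ meets a maximal alternating negative block of length $n$ with $p+n=m$, and I expect $\tau$ to act by a single application of the Garside relation, implemented by inserting $\Delta\Delta^{-1}$ at this boundary and commuting $\Delta^{\pm 1}$ outward using $\Delta g = \delta(g)\Delta$ for letters $g \in A$. With this explicit form of $\tau$ in hand, items (1)--(4) reduce to routine verifications.

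For (1), the equality $\tau(w) =_{\DA_m} w$ is immediate since every step in the definition of $\tau$ is either a defining relation of $\DA_m$ or a free cancellation; criticality of $\tau(w)$ is a direct structural check on the new word; and $\tau(\tau(w)) = w$ follows from the symmetry of the Garside relation, unwinding the inserted $\Delta\Delta^{-1}$ in the opposite direction. For (2), the rewrite swaps the names of letters within the two alternating blocks without changing their lengths, so $p$ and $n$ are preserved. For (3), ${}_m(a,b)$ and ${}_m(b,a)$ begin and end with different letters, and this propagates to the outermost letters of $w$ and $\tau(w)$ because those letters belong to the alternating blocks. Item (4) needs a short case split into positive, negative, and unsigned critical words; in each case the signs of the outermost letters of $\tau(w)$ are read off directly from the explicit form, the unsigned case being precisely the one in which the rewrite crosses between the positive and negative blocks and so flips signs at both ends.

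For (5), I would bound the fellow-traveller distance by exhibiting a short chain of intermediate words between $w$ and $\tau(w)$, each obtained by commuting $\Delta^{\pm 1}$ one letter further through $w$, and then observing that the disagreement between $w$ and $\tau(w)$ is confined to a window of length at most $m+1$, so that each product $w(i)^{-1}\tau(w)(i)$ is represented by a word of length at most $2m$ (essentially by $\Delta$ times a short prefix or suffix).

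The main obstacle is the final existence statement: given a freely reduced $w$ with $p(w)+n(w) \ge m$, the maximal alternating positive and negative subwords witnessing $p(w)$ and $n(w)$ need not sit flush against one another. My plan is to scan $w$ from left to right, tracking for each position the length of the longest alternating positive subword ending there and the length of the longest alternating negative subword starting there, and to show by a pigeonhole-style argument that at some boundary these two lengths must sum to at least $m$. A careful analysis at the junction --- which letter names occur on each side, and what constraints free reducedness imposes --- should then allow one to trim the surrounding runs to the exact lengths required and so extract a genuine critical subword. I expect this junction analysis to be the technically fiddly step, since the definitions of $p(w)$ and $n(w)$ only record lengths of the longest alternating subwords globally, whereas the notion of critical subword will demand a precise local configuration of positive and negative runs.
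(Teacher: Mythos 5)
Your outline for items (1)--(5) is essentially the paper's argument: $\tau$ is indeed implemented by trading one alternating block for the other via the Garside element and migrating $\Delta$ through the middle using $\Delta\xi =_{\DA_m}\delta(\xi)\Delta$, and (1)--(4) then reduce to inspection of the explicit formulas, with (5) obtained by comparing prefixes through $\Delta$. Two small inaccuracies there: $w$ and $\tau(w)$ differ along their \emph{entire} length (prefix, middle and suffix all change), so the disagreement is not confined to a window of size $m+1$; the right bound comes from the fact that corresponding prefixes differ in the group by $\Delta$ plus a length offset $|p-n|\le m$. And for a signed critical word ${}_m(x,y)\xi$ with $\xi$ nonempty, one end of the word lies in $\xi$ rather than in an alternating block, so (3) at that end needs the uniqueness of the maximal alternating subword (which forces $\f{\xi}=\l{{}_m(x,y)}$ and hence $\f{\delta(\xi)}\ne \f{w}$).

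The genuine gap is in the final existence statement, and it comes from a wrong guess at the definition of a critical word. An unsigned critical word is ${}_p(x,y)\xi(z^{-1},t^{-1})_n$ (or its mirror image): the positive block of length $p$ is a \emph{prefix} and the negative block of length $n$ is a \emph{suffix}, separated by an arbitrary (geodesically constrained) word $\xi$. The blocks need not meet at a junction, and your pigeonhole claim --- that some boundary of $w$ carries a positive alternating subword ending at it and a negative one starting at it with lengths summing to at least $m$ --- is false. Take $m=4$ and $w=abaab^{-1}$: here $p(w)=3$ and $n(w)=1$, so $p(w)+n(w)=m$, yet at every boundary the two abutting alternating blocks sum to at most $3$. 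Nevertheless $w$ \emph{is} itself critical, namely ${}_3(a,b)\,\xi\,(b^{-1},a^{-1})_1$ with $\xi=a$. The correct argument is much softer than a junction analysis: pass to a subword $w'$ with $p(w')+n(w')=m$ exactly, locate inside it a positive alternating block of length $p(w')$ and a negative one of length $n(w')$ (these are disjoint, having opposite signs), and take the subword of $w'$ running from the start of the leftmost of the two blocks to the end of the rightmost; it begins and ends with the required blocks and is critical by definition, everything between them being absorbed into $\xi$. Your plan to ``trim the runs at the junction'' has, in general, no junction to work with.
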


A freely reduced, unsigned, geodesic word $w$ with $p(w)+n(w)=m$ is defined to
be {\em critical} if it is has either of the forms
\[ {}_p(x,y)\xi (z^{-1},t^{-1})_n \quad{\rm or}\quad
 {}_n(x^{-1},y^{-1})\xi (z,t)_p. \]
where $\{x,y\} = \{z,t\}=\{a,b\}$.
(Obviously these conditions put some restrictions on the subword $\xi $.)

We define a positive geodesic word $w$ to be critical if it has either
of the forms ${}_m(x,y) \xi$ or $\xi (x,y)_m$, and only the one positive alternating
subword of length $m$.
Similarly we define a negative geodesic word $w$ to be critical it is has either
of the forms ${}_m(x^{-1},y^{-1}) \xi$ or $\xi (x^{-1},y^{-1})_m$, and only
the one negative alternating
subword of length $m$.
In either case the uniqueness condition on the maximal alternating subword
ensures that a maximal alternating subword is either on the left side or the
right side but not both (unless $\xi$ is empty), and so the decomposition of
the word is uniquely defined.

The involution $\tau$ is defined in terms of the automorphism
$\delta$ of $A^*$ that we defined earlier.
Note that, for any word $w$, $\delta(w)$ is a word representing the
element $w^\Delta=_{\DA_m} \Delta^{-1}w\Delta=_{\DA_m} \Delta w \Delta^{-1}$.

For unsigned critical words, we define $\tau$ by
\begin{eqnarray*}
\tau({}_p(x,y)\xi (z^{-1},t^{-1})_n)
&:=& {}_n(y^{-1},x^{-1})\delta(\xi )(t,z)_p,\\
\tau({}_n(x^{-1},y^{-1})\xi (z,t)_p) 
&:=& {}_p(y,x)\delta(\xi )(t^{-1},z^{-1})_n.
\end{eqnarray*}

For positive and negative geodesic words, we define $\tau$ as
follows, where $\xi$ is non-empty in the final four equations.
\begin{eqnarray*}
\tau({}_m(x,y))&:=& {}_m(y,x),\\
\tau({}_m(x^{-1},y^{-1}))&:=& {}_m(y^{-1},x^{-1})\\
\tau({}_m(x,y) \xi) &:=& \delta(\xi) (z,t)_m,\quad\hbox{\rm where}\quad z=\l{\xi},\\ 
\tau(\xi (x,y)_m) &:=& {}_m(t,z)\delta(\xi),\quad\hbox{\rm where}\quad z=\f{\xi},\\ 
\tau({}_m(x^{-1},y^{-1}) \xi) &:=& \delta(\xi) (z^{-1},t^{-1})_m,\quad\hbox{\rm where}\quad z=\l{\xi}^{-1},\\ 
\tau(\xi (x^{-1},y^{-1})_m) &:=& {}_m(t^{-1},z^{-1})\delta(\xi),\quad\hbox{\rm where}\quad z=\f{\xi}^{-1}.
\end{eqnarray*}

\begin{proofof}{Proposition~\ref{critical_properties}}
Most of (1) is immediate from the definitions of critical words $w$, and of
their images under $\tau$.
To verify that $w$ and $\tau(w)$
represent the same group element, we observe that,
whenever $p+n=m$,
\[ {}_p(x,y)=_{\DA_m} {}_n(y^{-1},x^{-1})\Delta
\hbox{ and } \Delta(z^{-1},t^{-1})_n =_{\DA_m}(t,z)_p,\]
and so
\begin{eqnarray*}
 p(x,y)\xi (z^{-1},t^{-1})_n
&=_{\DA_m}& {}_n(y^{-1},x^{-1})\Delta \xi (z^{-1},t^{-1})_n\\
&=_{\DA_m}& 
{}_n(y^{-1},x^{-1})\delta(\xi )\Delta (z^{-1},t^{-1})_n\\
&=_{\DA_m}& {}_n(y^{-1},x^{-1})\delta(\xi )(t,z)_p.
\end{eqnarray*}
That $\tau(\tau(w))=w$  is clear for unsigned words $w$; for positive and negative
words it will follow from (3).

(2) is immediate from the definitions.

It is immediate from the definition that Property (3) holds
for an unsigned critical word.
A short calculation verifies that it also holds for
critical positive and negative words.
For example, for a critical positive word $w$ of the form ${}_m(x,y)$, 
the definition of $\tau$ clearly ensures that the
names of the last letters of $w$ and $\tau(w)$ are different.
If $\xi$ is non-empty, 
the fact that $w$ has a unique positive alternating subword of
length $m$ ensures, both when $m$ is odd and even, that
$\f{\xi}=\l{{}_m(x,y)}$, and so that $\f{\delta(\xi)} = y= \f{\tau(w)} \neq x=\f{w}$.

(4) is immediate from the descriptions of $w$
and $\tau(w)$. 

The fellow traveller property (5) follows from the observation that, 
for any prefix $\eta$ of $\xi$, we have
$\delta(\eta^{-1})\, {}_n(y^{-1},x^{-1})^{-1}\, {}_p(x,y)\eta =_{\DA_m}
\delta(\eta^{-1}) \Delta \eta =_{\DA_m} \Delta$, which has length at most $m$.
Note that the words ${}_(x,y)\eta$ and ${}_n(y^{-1},x^{-1})\delta(\eta)$ may not have the same length, but their length differs by $|p-m|\leq m$.
Hence the words fellow travel at distance at most $2m$.

Finally we observe that any word $w$ satisfying $p(w)+n(w) \geq m$ must
have a subword $w'$ with $p(w') + n(w')=m$. If $w'$ is unsigned, it must
either contain a subword
that begins with a positive alternating word of length $p(w')$ and
ends with a negative alternating word of length $n'(w)$ or 
contain a subword that begins with such
a negative alternating word and ends with such a positive alternating word.
Such a subword is critical. If  $w'$ is positive or negative, certainly any 
maximal alternating subword is critical. (There could also be other critical
subwords containing these.)
\end{proofof}

We define $T$ to be the set of all critical words.
We call $w$ {\em upper critical} if $\tau(w)\lexle w$ and {\em lower critical}
if $w \lexle \tau(w)$.
Note that Proposition~\ref{critical_properties} (3) and (4) ensure that
whether $w$ is upper or lower critical is determined by the first
letter of $w$ together with the fact of whether $w$ is positive,
negative or unsigned.

We easily deduce the following from Proposition~\ref{critical_properties}, which we record here since it is useful later on.
\begin{corollary}
\label{critical_subword}
Suppose that $w$ is critical. If $w_1$ is a prefix of $w$ that is also critical, then $\tau(w_1)$ begins with the same letter as $\tau(w)$. If $w_2$ is a suffix
of $w$ that is also critical, then $\tau(w_2)$ ends with the same letter as 
$\tau(w)$.
\end{corollary}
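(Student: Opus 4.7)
The plan is to deduce the corollary directly from properties~(3) and~(4) of Proposition~\ref{critical_properties}, together with the simple fact that, being a prefix of $w$, the word $w_1$ shares its first letter with $w$. I would prove only the prefix statement in detail; the suffix statement follows by a symmetric argument, invoking the analogous clauses of (3) and (4) for last letters.

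The first step is to show that $w$ and $w_1$ have the same sign type (both unsigned, both positive, or both negative). The positive and negative cases are automatic: any prefix of a positive (resp.\ negative) word is itself positive (resp.\ negative). The unsigned case carries the real content: if $w$ is unsigned then $p(w), n(w) \ge 1$ and hence $p(w), n(w) \le m-1$. Since any alternating subword of $w_1$ is also an alternating subword of $w$, we get $p(w_1) \le p(w) < m$ and $n(w_1) \le n(w) < m$. Thus $w_1$ contains neither a positive nor a negative alternating subword of length $m$, which rules out $w_1$ being positive critical or negative critical; being critical by hypothesis, $w_1$ must therefore be unsigned.

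Once the sign types of $w$ and $w_1$ agree, the conclusion follows quickly in the two-generator setting. By Proposition~\ref{critical_properties}~(3), for any critical word $v$, the first letter of $\tau(v)$ has a name different from $\f{v}$; since $\f{w_1} = \f{w}$ and there are only two generator names available, the first letters of $\tau(w)$ and $\tau(w_1)$ must share the same name. By Proposition~\ref{critical_properties}~(4), the sign of the first letter of $\tau(v)$ is determined by the sign of $\f{v}$ together with the sign type of $v$; since $w$ and $w_1$ agree on both, the first letters of $\tau(w)$ and $\tau(w_1)$ share the same sign, and hence coincide. I expect the main obstacle to be the sign-type step: one has to exploit the strict bound $p(w), n(w) < m$ to exclude the possibility that a critical prefix of an unsigned critical word is itself positive or negative critical, after which properties~(3) and~(4) do all the remaining work.
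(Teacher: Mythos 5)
Your proof is correct and follows exactly the route the paper intends: the paper gives no explicit argument, merely noting that the corollary is "easily deduced" from Proposition~\ref{critical_properties}, and the preceding remark that parts (3) and (4) determine the first letter of $\tau(w)$ from $\f{w}$ and the sign type of $w$ is precisely your argument. Your careful justification that a critical prefix of an unsigned critical word must itself be unsigned (via $p(w),n(w)\le m-1$) is a detail the paper leaves implicit, and it is right.
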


We already observed that any non-geodesic or even non-unique geodesic
must contain a critical subword. In fact we can use the critical
subwords within non-geodesics to reduce to geodesic form.

\begin{lemma}\label{geodlem}
Suppose that $w \in A^*$ is geodesic and $g \in A$.

If $wg$ is non-geodesic, then either $\l{w}=g^{-1}$
or $w$ has a critical suffix $v$ such that $\l{\tau(v)}=g^{-1}$.
Similarly, if $gw$ is non-geodesic, then either $\f{w}=g^{-1}$ or $w$ has
a critical prefix $v$ such that  $\f{\tau(v)}=g^{-1}$.
\end{lemma}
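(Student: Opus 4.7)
The plan is to use the Mairesse--Math\'eus characterization ($w$ is geodesic iff $p(w)+n(w)\le m$) together with the constraint imposed by free reduction, and then read a critical suffix directly off the resulting structural picture. Since the $gw$ statement follows from the $wg$ statement by reversing words (reversal preserves $p,n$ and the defining relation of $\DA_m$ is symmetric under reversal), I treat only the $wg$ case. Assume $\l{w}\ne g^{-1}$, so that $wg$ is freely reduced; by interchanging positive and negative letters I may further assume that $g$ is positive.

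Appending the positive letter $g$ leaves $n(wg)=n(w)$ and can raise $p$ by at most one, so $w$ being geodesic and $wg$ not forces $p(w)+n(w)=m$ and $p(wg)=p(w)+1$. Write $p:=p(w)$ and $n:=n(w)$. The new alternating positive subword of length $p+1$ in $wg$ has to terminate at $g$, so $g$ must alternate with $\l{w}$ (in particular their names differ); and when $p\ge 1$ the last $p$ letters of $w$ form the alternating positive block $(z,t)_p$ with $t=\l{w}$ and $z=g$.

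In the typical case $p,n\ge 1$, choose any alternating negative subword $u$ of $w$ of length $n$; being negative, it lies entirely to the left of the positive suffix. Let $v$ be the suffix of $w$ starting at the first letter of $u$. Since any subword of $w$ still satisfies $p+n\le m$, $v$ is geodesic; its first $n$ letters form ${}_n(x^{-1},y^{-1})$, its last $p$ letters form $(z,t)_p$, and both $p(v)=p$ and $n(v)=n$ by sandwiching between the explicit blocks and the $w$-level bounds. Hence $v$ matches the unsigned critical template ${}_n(x^{-1},y^{-1})\xi(z,t)_p$, and reading off $\tau(v)={}_p(y,x)\delta(\xi)(t^{-1},z^{-1})_n$ gives $\l{\tau(v)}=z^{-1}=g^{-1}$, as required.

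The remaining edge case is $p=0$, when $w$ is negative with $n(w)=m$ (the case $n=0$ cannot arise, for then $wg$ is a positive word and hence geodesic). I would take $v$ to be the suffix of $w$ starting at the first letter of the \emph{rightmost} alternating negative subword of length $m$ in $w$. The rightmost choice forces $v$ to contain exactly one length-$m$ alternating negative subword (sitting at its beginning), so $v$ is negative critical of the form ${}_m(x^{-1},y^{-1})\xi$ (with $\xi$ possibly empty); the $\tau$-formula for that form, combined with the already-established fact that $g$'s name differs from $\l{w}$'s name, yields $\l{\tau(v)}=g^{-1}$. The main obstacle throughout is really just verifying the uniqueness condition on length-$m$ alternating subwords of $v$ in this edge case; the unsigned case is a direct reading of the definitions.
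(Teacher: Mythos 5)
Your proposal is correct and follows essentially the same route as the paper's proof: use the Mairesse--Math\'eus criterion to force $p(w)+n(w)=m$ and $p(wg)=p(w)+1$, read off the alternating positive suffix of $wg$ and a negative alternating block of length $n$ inside $w$, and assemble the critical suffix ${}_n(x^{-1},y^{-1})\xi(z,t)_p$ whose $\tau$-image ends in $g^{-1}$. You supply somewhat more detail than the paper on the $p=0$ edge case (which the paper dismisses with a parenthetical remark) and you use word reversal rather than inversion for the $gw$ half, but these are cosmetic differences.
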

\begin{proof}
Let $p=p(w), n=n(w)$. Suppose that $wg$ is non-geodesic and that $w$ does not
end with $g^{-1}$, so $wg$ is freely reduced. Then $p(wg)+n(wg)>m$,
and since $w$ is geodesic,
we must have $p(w)+n(w)=m$ and $p(wg)+n(wg)=m+1$. If $g = z \in X$, then
$p(wg)=p+1$, and so $wg$ must end with an alternating positive
subword of length $p+1$. Then $wg$ (and hence $w$) also contains a
negative alternating subword of length $n$, and hence
$w$ has a critical suffix $v ={}_n(x^{-1},y^{-1})\xi(z,t)_p$
for which $\l{\tau(v)}=z^{-1}=g^{-1}$. (This is true even when $p=0$.)
Similarly, if $g=z^{-1}$ with $z \in X$ then $n(wg)=n+1$
and $w$ has a critical suffix $v ={}_p(x,y)\xi(z^{-1},t^{-1})_n$ with
$\l{\tau(v)} = z=g^{-1}$. 

We can deduce the second result by applying the first result to $w^{-1}$.
\end{proof}

In this article we are specifically interested in shortlex normal form.
We shall see that whenever $w$ is a freely reduced word that is not minimal
under the shortlex ordering then $w$ has a factorisation as $w_1w_2w_3$,
where $w_2$ is critical and
either $w_1\tau(w_2)w_3 \lexle w$ or $w_1\tau(w_2)w_3$ is not freely
reduced. In that case, we call the substitution of $\tau(w_2)$ for $w_2$ within $w$ together with any subsequent free reduction within $w_1\tau(w_2)w_3$ a
{\rm critical reduction} of $w$.

Where a critical reduction as above reduces $w$ lexicographically,
the first letter of $\tau(w_2)$ must precede the first letter of $w_2$
lexicographically. Where a critical reduction is length reducing there could be free cancellation at either end of $\tau(w_2)$; however we shall see that 
we can always select reductions in such a way that free cancellation is
at the right hand end of the critical subword. With this in mind we define
$W$ to be the set of freely reduced words that have
no factorisation as $w_1w_2w_3$ with $w_2$ critical that gives
either $\f{\tau(w_2)}\lexle \f{w_2}$ or
free cancellation between $\l{\tau(w_2)}$ and $\f{w_3}$.

\begin{theorem}\label{artin2}
The set $W$ is the set of shortlex minimal representatives for the 2-generator Artin group $\DA_m$.
\end{theorem}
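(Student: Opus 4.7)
The plan is to prove the two containments between $W$ and the set of shortlex-minimal representatives for $\DA_m$ separately. The direction \emph{shortlex-minimal} $\Rightarrow W$ is straightforward: a shortlex-minimal word $w$ is freely reduced, and any forbidden factorisation $w = w_1 w_2 w_3$ with $w_2$ critical would yield, via Proposition~\ref{critical_properties}(1), an equivalent word $w_1 \tau(w_2) w_3$ that is either strictly lex-smaller at position $|w_1|+1$ (when $\f{\tau(w_2)} \lexle \f{w_2}$) or strictly shorter after right-boundary free cancellation --- contradicting shortlex-minimality.

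For $W \Rightarrow$ \emph{shortlex-minimal}, take $w \in W$. First I would verify that $w$ is geodesic: if not, let $\alpha g$ be the shortest non-geodesic prefix of $w$, so $\alpha$ is geodesic, and since $w$ is freely reduced we have $\l{\alpha} \neq g^{-1}$. Lemma~\ref{geodlem} then produces a critical suffix $v$ of $\alpha$ with $\l{\tau(v)} = g^{-1}$; writing $\alpha = w_1 v$ and $w = w_1 v g \beta$ exhibits a critical factorisation of $w$ with free cancellation between $\l{\tau(v)}$ and $\f{g\beta} = g$, violating $w \in W$.

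The main step, and the principal obstacle, is then to show that a geodesic $w \in W$ is lex-minimal among geodesic representatives of its element. The strategy is to establish --- building on the description of geodesics from \cite{MairesseMatheus} --- that any two geodesic representatives of a common element of $\DA_m$ are linked by a sequence of non-cancelling $\tau$-moves on critical subwords, with every intermediate word geodesic. Given a hypothetical geodesic $w' \lexle w$ with $w' =_{\DA_m} w$, consider such a sequence $w = u_0, u_1, \ldots, u_k = w'$ and the first index $i$ with $u_i \lexle w$; the move $u_{i-1} \to u_i$ realises a lex-decrease via $\tau$ on some critical subword of $u_{i-1}$, while the earlier moves are lex-non-decreasing. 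Using Proposition~\ref{critical_properties}(3) and Corollary~\ref{critical_subword} to track how first letters of critical subwords behave under composed $\tau$-substitutions, one would pull this lex-decrease back through the preceding moves to exhibit a critical factorisation of $w$ itself with $\f{\tau(w_2)} \lexle \f{w_2}$, contradicting $w \in W$. The delicate part is executing this pull-back faithfully and arranging the sequence so that the intermediate moves are well-behaved.
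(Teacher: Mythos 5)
Your first containment and your verification that words in $W$ are geodesic are both correct and essentially match the paper. The problem is the main step, which is where the content of the theorem lives. You propose to (i) show that any two geodesic representatives of an element of $\DA_m$ are linked by a chain of non-cancelling $\tau$-moves with geodesic intermediates, and (ii) pull a lex-decrease back through such a chain to exhibit a forbidden critical factorisation of $w$. Neither part is executed, and you flag the pull-back yourself as ``the delicate part.'' Worse, (i) is not available to you at this stage: in the paper the connectivity of geodesic representatives under $\tau$-moves is deduced \emph{from} Theorem~\ref{artin2} (the proof of Corollary~\ref{2gen_fftp} begins ``It follows immediately from Theorem~\ref{artin2} that $w$ and $v$ are linked by a sequence of $\tau$-moves''), so assuming it either begs the question or requires a separate nontrivial derivation from the Mairesse--Math\'eus criterion that you have not supplied. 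And (ii) --- tracking how first letters of critical subwords transform through several overlapping $\tau$-substitutions --- is exactly the bookkeeping the paper only undertakes in the higher-rank setting via leftward lex reducing sequences; in the 2-generator case it is avoidable.

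The paper's actual argument for this step is short. Induct on $|w|$, so that every proper subword of $w \in W$ may be assumed shortlex minimal. If $w$ is geodesic but not shortlex minimal, let $v$ be its shortlex normal form; minimality of $\s{w}$ forces $g := \f{v} \lexle \f{w}$, in particular $g \ne \f{w}$, so $g^{-1}w$ is freely reduced and represents the same element as $\s{v}$, which has length $|w|-1$; hence $g^{-1}w$ is not geodesic. The second half of Lemma~\ref{geodlem} then produces a critical \emph{prefix} $w'$ of $w$ with $\f{\tau(w')} = g \lexle \f{w'}$, which is precisely a forbidden factorisation, contradicting $w \in W$. No chain linking arbitrary geodesics is needed; a single application of Lemma~\ref{geodlem} to the prepended letter $g^{-1}$ does all the work.
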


\begin{proof}
Since both free and critical reductions to a word produce a word less than
it in the shortlex order, a shortlex minimal word must certainly be in $W$.

So now suppose that $w \in W$, but that $w$ is not shortlex minimal.
We may assume by induction that every subword of $w$ is shortlex minimal.

First suppose that $w$ is not geodesic.
Then, since $\p{w}$ is geodesic, Lemma~\ref{geodlem} implies that
$\p{w}$ has a critical suffix $w'$ such that $\l{\tau(w')} = \l{w}^{-1}$.
This contradicts $w \in W$.

So suppose that $w$ is geodesic but not shortlex minimal. Then $p+n=m$,
with $p=p(w), n=n(w)$. 
Let $v$ be the shortlex minimal representative of $w$.
Then, since every subword of $w$ is shortlex minimal, we
must have $\f{v} \lexle \f{w}$. Let $g=\f{v}$. 
Then $g^{-1}w$ represents the
same element as $\s{v}$, and hence is not geodesic. 
So by Lemma~\ref{geodlem}, $w$ has a critical prefix $w'$
with $\f{\tau(w')} = g$. But then $g \lexle \f{w}$ implies $\tau(w') \lexle w'$,
again contradicting $w \in W$.  
\end{proof}

This completes our proof of Theorem~\ref{artin2},  which is an essential
component of Theorem~\ref{Wslred}. We finish this section with some further
technical results on geodesics, which will be used in Section~\ref{geodesics_sec}.

\begin{lemma}
\label{2gen_geos}
Suppose that $w$ and $v$ are distinct geodesics in $\DA_m$
such that one can be obtained from the other by a single $\tau$-move,
and suppose that $\l{w}$ has name $a$.
Let $p=p(w)$, $n=n(w)$, and suppose that $p$ and $n$ are both non-zero.
Let $\sigma$ be the longest alternating suffix of $w$.
\begin{mylist}
\item[(1)] If $\sigma=(b,a)_p$, then $v$ has either $\sigma$ or
$(a^{-1},b^{-1})_n$ as a suffix.
\item[(2)] If $\sigma=(b^{-1},a^{-1})_n$, then $v$ has either $\sigma$ or
$(a,b)_p$ as a suffix.
\item[(3)] Otherwise $\sigma$ is also the longest alternating suffix of $v$.
\end{mylist}
\end{lemma}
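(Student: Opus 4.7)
The plan is to write $w=w_1w_2w_3$ and $v=w_1\tau(w_2)w_3$ with $w_2$ a critical subword (WLOG; the opposite orientation of the move is symmetric since $\tau^2=\mathrm{id}$). Because $p,n>0$, $w_2$ is unsigned critical, and from $p(w_2)+n(w_2)=m=p+n$ together with $p(w_2)\le p$ and $n(w_2)\le n$, one forces $p(w_2)=p$ and $n(w_2)=n$, so the maximal positive and negative alternating subwords of $w$ both lie inside $w_2$. Throughout I will use that, by parts~(3) and~(4) of Proposition~\ref{critical_properties}, $\l{w_2}$ and $\l{\tau(w_2)}$ have different names and opposite signs.

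First I would dispose of the case $w_3=\epsilon$. Then $\l{w}=\l{w_2}$, so the form of $w_2$ determines the sign of the last letter: form~1 ($w_2={}_p(x,y)\xi(z^{-1},t^{-1})_n$) gives $\l{w}=t^{-1}$, and the constraint that $\l{w}$ has name $a$ forces $t=a$, $z=b$. Reading off the alternating suffixes then gives $\sigma=(b^{-1},a^{-1})_n$, and from the definition of $\tau$ the word $v$ ends with $(a,b)_p$, which is the conclusion of~(2). Form~2 is completely analogous and yields~(1).

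For $w_3\ne\epsilon$, $\l{v}=\l{w_3}=\l{w}$ has name $a$; by a sign symmetry I may assume $\l{w}=a$, so $\sigma=(b,a)_k$ for some $1\le k\le p$. The central observation is that $|w_3|\ge k$: otherwise $\sigma$ would extend strictly across the $w_2$/$w_3$ boundary, which requires $\l{w_2}$ positive and of a different name from $\f{w_3}$. This puts $w_2$ in form~2 with $\l{w_2}=t$ and $\f{w_3}=z$ positive, whence $\l{\tau(w_2)}=z^{-1}$ and $\f{w_3}=z$ would be adjacent in $v$, contradicting the fact that $v$ is freely reduced. Once $|w_3|\ge k$, $\sigma$ is a suffix of $w_3$ and hence of $v$, which is already the conclusion of~(1) when $k=p$.

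For case~(3), with $k<p$, one further has to check that $\sigma$ is actually the \emph{longest} alternating suffix of $v$. If $|w_3|>k$ the blocker of alternation lies inside $w_3$ and survives unchanged. The delicate sub-case is $|w_3|=k$, where the blocker is $\l{w_2}$ in $w$ and $\l{\tau(w_2)}$ in $v$; a short split on the form of $w_2$ shows the blocking mode flips but still blocks. In form~1, $\l{w_2}=t^{-1}$ blocks by sign, while freely-reducedness of $w$ forces $\f{w_3}$ to have name $z$, so $\l{\tau(w_2)}=z$ is positive with the same name as $\f{w_3}$ and blocks by name coincidence. In form~2, $\l{w_2}=t$ blocks by sharing its name with $\f{w_3}$, and $\l{\tau(w_2)}=z^{-1}$ blocks by sign. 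The case $\l{w}=a^{-1}$ is entirely parallel. I expect the main obstacle to be this $|w_3|=k$ sub-case of~(3), where the sign/name bookkeeping needs to be tracked carefully across the two critical forms.
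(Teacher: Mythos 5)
Your proposal is correct and follows essentially the same route as the paper: you identify the critical subword $w_2$ involved in the move, use free reducedness of $v$ to rule out $w_2$ ending strictly inside $\sigma$, and then check the effect of $\tau$ on the letter adjacent to $\sigma$ (the paper phrases the latter as a short contradiction -- if the new boundary letter extended the alternation then $w$ would not have been freely reduced -- where you do an explicit split on the two unsigned critical forms). Your treatment of cases (1) and (2) via the dichotomy $w_3=\epsilon$ versus $|w_3|\ge k$ is just a spelled-out version of the paper's observation that any critical subword meeting $\sigma$ must contain it.
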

\begin{proof}
In cases (1) and (2), there are critical suffices containing $\sigma$ and any
critical subword intersecting $\sigma$ must contain it. The result follows
immediately by looking at the effect of $\tau$ on such a subword.

In case (3), without loss of generality we may assume that $\sigma=(b,a)_k$,
with $k<p$, and
we may assume that $v$ is obtained from $w$ by applying a single $\tau$ move that
involves a critical subword $w'$ of $w$ immediately preceding $\sigma$; note
that $\sigma$ itself cannot intersect a critical subword. We suppose that $v$
contains a longer alternating suffix.
Then $\l{\tau(w')}$ must be whichever element
of $\{a,b\}$ is not the first letter of $\sigma$. But in
that case $\l{(w')}= \f{\sigma}^{-1}$, and hence $w$ is not freely
reduced, and cannot be geodesic. 
We have a contradiction, and so deduce that $\sigma$ is a longest alternating
suffix of $v$.
\end{proof}

\begin{corollary} \label{2gen_fftp}
Suppose that $w =_{\DA_m} v$ with $w,v$ both geodesic, and
$\l{w} \ne \l{v}$. Then a single $\tau$-move on a critical suffix of $w$
transforms $w$ to a geodesic word $v'$ that $2m$-fellow travels with $w$,
such that $v=_{\DA_m}v'$ and $\l{v'} = \l{v}$.
\end{corollary}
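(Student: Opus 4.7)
The plan is to produce the desired $v'$ by applying Lemma~\ref{geodlem} to the non-geodesic word $w\l{v}^{-1}$; the critical suffix that lemma identifies will be exactly the one on which to perform the $\tau$-move.

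First I would set $g := \l{v}^{-1}$ and observe that $wg =_{\DA_m} v\l{v}^{-1} = \p{v}$, which has geodesic length $|v|-1$; since $|wg|=|w|+1=|v|+1$, the word $wg$ is non-geodesic. The hypothesis $\l{w}\ne\l{v}$ translates to $\l{w} \ne g^{-1}$, so $wg$ is also freely reduced. Lemma~\ref{geodlem} then guarantees a critical suffix $w'$ of $w$ with $\l{\tau(w')} = g^{-1} = \l{v}$.

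Writing $w = w_1 w'$, I would define $v' := w_1 \tau(w')$ and verify its three required properties. That $v' =_{\DA_m} v$ follows from Proposition~\ref{critical_properties}(1). Each clause in the definition of $\tau$ visibly preserves length, so $|v'|=|w|=|v|$ and hence $v'$ is geodesic. The last letter of $v'$ is $\l{\tau(w')} = \l{v}$ by construction. Finally, $w$ and $v'$ agree on the common prefix $w_1$, so the fellow-travelling bound between them reduces to the $2m$-fellow-travelling of $w'$ and $\tau(w')$ already guaranteed by Proposition~\ref{critical_properties}(5).

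There is essentially no substantive obstacle here: the whole argument reduces to the observation that $\l{v}^{-1}$ is precisely the letter to append to $w$ so that Lemma~\ref{geodlem} supplies the required critical suffix; the rest is bookkeeping from properties (1) and (5) of Proposition~\ref{critical_properties}.
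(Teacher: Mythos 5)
Your proof is correct, but it takes a genuinely different route from the paper's. The paper first invokes Theorem~\ref{artin2} to see that $w$ and $v$ are linked by a chain of $\tau$-moves, then uses Lemma~\ref{2gen_geos} to classify the possible maximal alternating suffixes, and finally locates the critical suffix by a case analysis (both words positive, both negative, or unsigned). You instead observe that $w\l{v}^{-1}$ is freely reduced (by the hypothesis $\l{w}\ne\l{v}$) and non-geodesic (since it equals $\p{v}$ in $\DA_m$), so Lemma~\ref{geodlem} hands you a critical suffix $w'$ with $\l{\tau(w')}=\l{v}$ directly; geodesity of $v'$, the identity $v'=_{\DA_m}v$, and the $2m$-fellow-travelling then all fall out of Proposition~\ref{critical_properties} exactly as you say. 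Your argument is shorter, bypasses Theorem~\ref{artin2} and Lemma~\ref{2gen_geos} entirely, and pins down the exact last letter of $v'$ in one step (the paper's unsigned case only explicitly checks that the names of $\l{v'}$ and $\l{v}$ agree). What the paper's route buys is the finer structural information in Lemma~\ref{2gen_geos} about how $\tau$-moves interact with alternating suffixes, which is of independent interest; but for the corollary itself your approach is the more economical one.
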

\begin{proof}
It follows immediately from Theorem~\ref{artin2}
that $w$ and $v$ are linked by a sequence of $\tau$-moves. 
Then $w$ and $v$ are either both positive, or both negative,
or by Lemma~\ref{2gen_geos}
one ends with a positive alternating word 
$(b,a)_p$ and the other with $(a^{-1},b^{-1})_n$,
where $p=p(w)$, $n=n(w)$.

When both words are positive, we may (without loss of generality) suppose that
$w$ has a minimal critical suffix $w'$ of the form
${}_m(a,b)\xi$ for some possibly empty word $\xi$.
We let $v'$ be the word derived from $w$ by applying a $\tau$-move to
$w'$. Then $v'$ $2m$-fellow travels with $w$, by Proposition~\ref{critical_properties}.
It follows from the definition of $\tau$ that $\tau(w')$ has 
its last letter distinct from $w'$, and hence this must be
the last symbol of $v$.
The argument is analogous when both words are negative.

So now we suppose that $p(w)$ and $n(w)$ are both non-zero.
Assuming that the name of $\l{w}$ is $a$ (and hence
the name of $\l{v}$ is $b$) we see that
$w$ has a critical suffix $w'$ that ends either with $(b,a)_p$,
or with $(b^{-1},a^{-1})_n$. 
Again we let $v'$ be the word derived from $w$ by applying a $\tau$-move to
$w'$.
Then $\tau(w')$ ends either with $b^{-1}$ or with $b$, and so
$\l{v'}$ has name $b$, the same as $v$.
\end{proof}

We can also deduce the following,
as is explained in Section~\ref{geodesics_sec} just before
Proposition~\ref{fftp}:
\begin{corollary}
For any $m$, the dihedral Artin group $\DA_m$ defined over its standard
generating set satisfies FFTP, and hence the set of all geodesics over that
generating set is regular.
\end{corollary}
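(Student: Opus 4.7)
The plan is to verify FFTP for $\DA_m$ directly with fellow-traveller constant $2m$, and then invoke the theorem of Neumann and Shapiro~\cite{NeumannShapiro} that in any finitely generated group, FFTP forces the language of geodesic words to be regular.

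Given a non-geodesic word $w \in A^*$, I would first locate where non-geodesicity first arises: let $u$ be the longest geodesic prefix of $w$ and write $w = u g v$ with $g \in A$, so that $u g$ is non-geodesic. Lemma~\ref{geodlem} then offers two cases. In the first, $\l{u} = g^{-1}$, and I take $w' = \p{u} \cdot v$; then $|w'| = |w| - 2$, $w' =_{\DA_m} w$, and the prefixes of $w$ and $w'$ of any common length differ by at most two letters, so they $2$-fellow travel. In the second, $u$ has a critical suffix $s$ with $\l{\tau(s)} = g^{-1}$; writing $u = u_1 s$ and letting $s_0$ denote $\tau(s)$ with its last letter removed, I set $w' := u_1 s_0 v$. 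Since $|\tau(s)| = |s|$ (immediate from the definition of $\tau$) we have $|w'| = |w| - 2$, and since $\tau(s) =_{\DA_m} s$ by Proposition~\ref{critical_properties}(1), the deleted letter $g^{-1}$ cancels in the group with the following $g$, giving $w' =_{\DA_m} w$. The fellow-travel bound then follows from Proposition~\ref{critical_properties}(5): on the middle segment, prefixes of $w$ and $w'$ amount to comparing prefixes of $s$ with prefixes of $\tau(s)$, so differ by at most $2m$ in the group; past this segment a short calculation using $s_0 =_{\DA_m} s g$ shows $|w(i)^{-1} w'(i)|_{\DA_m} \le 2$.

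The main obstacle is this last resync bookkeeping, where $w'$ is offset from $w$ by two letters past the critical substitution; the computation is entirely routine once Proposition~\ref{critical_properties}(5) is in hand, and the remaining edge cases (say $v$ empty, or the critical suffix coinciding with all of $u$) are handled by the same estimates. Everything else is a repackaging of Lemma~\ref{geodlem}, and the regularity of the set of geodesics then follows immediately from the Neumann--Shapiro theorem.
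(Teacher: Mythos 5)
Your proposal is correct, and the core mechanism is the one the paper uses: a single $\tau$-move applied to a critical suffix of the longest geodesic prefix, followed by a free cancellation, with the $2m$ bound coming from Proposition~\ref{critical_properties}(5) and a two-letter offset computation afterwards. The route by which you obtain that critical suffix is different, though, and leaner. The paper deduces the corollary from Corollary~\ref{2gen_fftp}: writing the minimal non-geodesic prefix as $ug$, it observes that $u$ equals in $\DA_m$ a geodesic ending in $g^{-1}$, and then invokes Corollary~\ref{2gen_fftp} to produce a geodesic $v'=_{\DA_m}u$ with $\l{v'}=g^{-1}$; but Corollary~\ref{2gen_fftp} itself rests on Theorem~\ref{artin2} (to know that any two geodesic representatives are linked by $\tau$-moves) and on Lemma~\ref{2gen_geos}. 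You instead go straight to Lemma~\ref{geodlem}, which hands you a critical suffix $s$ of $u$ with $\l{\tau(s)}=g^{-1}$ directly from the non-geodesicity of $ug$, bypassing the shortlex machinery entirely. What your version buys is logical economy — FFTP for $\DA_m$ then depends only on Proposition~\ref{critical_properties} and Lemma~\ref{geodlem}, not on Theorem~\ref{artin2}; what the paper's version buys is reuse, since Corollary~\ref{2gen_fftp} is needed anyway as the base case of the higher-rank FFTP argument in Section~\ref{geodesics_sec}. Your bookkeeping for the offset segments is right as sketched: past the substitution the prefixes of $w$ and $w'$ differ in the group by a subword of $v$ of length $2$, so the overall constant is $\max\{2m,2\}=2m$.
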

Note that the regularity of this set of geodesics was already known,
\cite{MairesseMatheus}.

\begin{lemma}
\label{gju_lemma}
Suppose that for some letter $g$ and some $j\geq 1$, a $\tau$-move transforms
a geodesic word $g^ju$ in $\DA_m$ to a word $v$. Then there is a $\tau$-move
that transforms $gu$ to a word $v'$ with $\l{v'}=\l{v}$.
\end{lemma}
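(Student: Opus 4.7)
The plan is to case-split on where the critical subword $w_2$ of the given $\tau$-move sits within $g^j u$. Write $g^j u = A w_2 B$. If $w_2$ lies entirely inside $u$, or if $w_2 = g u_1$ starts at the very last $g$ of the $g^j$-prefix (with $u = u_1 u_2$ and $B = u_2$), then the identical $\tau$-move applied inside $gu$ yields a word $v'$ with $\l{v'} = \l{v}$. The real content is the case $w_2 = g^k u_1$ with $k \ge 2$.

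When $k \ge 2$, the word $w_2$ starts with at least two consecutive copies of $g$, which constrains its critical form. A critical word begins with an alternating block of length $p$, $n$, or $m$, and such a block of length $\ge 2$ necessarily uses two distinct letters. Hence the admissible forms for $w_2$ are: unsigned of type ${}_p(x,y)\xi(z^{-1},t^{-1})_n$ with $x = g$ and $p = 1$ (if $g$ is positive), or ${}_n(x^{-1},y^{-1})\xi(z,t)_p$ with $x^{-1} = g$ and $n = 1$ (if $g$ is negative); or positive critical of type $\xi(x,y)_m$; or negative critical of type $\xi(x^{-1},y^{-1})_m$. The left-sided variants ${}_m(x,y)\xi$ and ${}_m(x^{-1},y^{-1})\xi$ begin with an alternating block of length $m \ge 2$ and are excluded.

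Define $w_2' := g u_1$, the suffix of $w_2$ of length $|w_2| - (k-1)$ obtained by deleting the first $k-1$ copies of $g$ from the front. I would show $w_2'$ is critical: it is a contiguous substring of the geodesic $w_2$, hence geodesic; any alternating subword of $w_2$ can use at most one of the leading $g$'s (consecutive letters of an alternating word are distinct), so every alternating subword of $w_2$ already lies inside $w_2'$, giving $p(w_2') = p(w_2)$ and $n(w_2') = n(w_2)$ with sum $m$; and the uniqueness condition in the positive and negative cases descends from $w_2$ to its substring. A direct inspection of the $\tau$-formulas then confirms $\l{\tau(w_2')} = \l{\tau(w_2)}$: in the unsigned cases both equal the fixed endpoint $z$ (respectively $z^{-1}$) of the trailing alternating block, unchanged under trimming; in the positive and negative cases both equal $\delta(\l{\xi'}) = \delta(\l{\xi})$, since trimming only removes leading $g$'s from $\xi$, with a short separate verification for the edge case where the trimming empties $\xi'$ and $w_2'$ degenerates to a purely alternating word ${}_m(x',y')$. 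Applying the $\tau$-move to $w_2'$ inside $gu = w_2' u_2$ then produces $v' = \tau(w_2') u_2$, whose last letter matches $\l{v}$ (equal to $\l{u_2}$ if $u_2$ is nonempty, else the common value $\l{\tau(w_2)} = \l{\tau(w_2')}$).

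The main obstacle is the case $k \ge 2$: enumerating the admissible critical forms of $w_2$ using that an initial alternating block cannot begin $gg$ unless it has length $1$, and then checking in each form that the trimmed word $w_2'$ remains critical and that $\l{\tau(w_2')} = \l{\tau(w_2)}$, including the edge case where $\xi'$ becomes empty and the critical decomposition of $w_2'$ switches sides.
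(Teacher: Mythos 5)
Your proposal is correct and follows the same route as the paper: the only non-immediate case is when the critical subword absorbs two or more of the leading $g$'s, and there one trims it to a single leading $g$, checks the trimmed word is still critical with the same $p$ and $n$, and verifies that the last letter of its $\tau$-image is unchanged. The paper dismisses this verification as clear from the definitions, whereas you spell out the admissible critical forms and the edge cases; the content is the same.
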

\begin{proof}
The given $\tau$-move transforms a critical subword $w$ of $g^ju$.
The result is immediate except when $w=g^{j'}u$ for some $j' \ge 1$.
It is clear from the definition of critical words that in this
case $gu'$ is also critical and that $\l{\tau(w)} = \l{\tau(gu')}$,
and the result follows.
\end{proof}

\section{Shortlex reduction in Artin groups of large type}
\label{shortlex_sec}

We assume from now on that $G=\langle X\rangle$ is an Artin group of large type
defined by a matrix $(m_{ij})$ with each $m_{ij} \ge 3$
and not all $m_{ij}$ infinite.

For any distinct pair of generators $a_i,a_j$, where $i<j$, we let $G(a_i,a_j)$ be the subgroup of $G$
generated by $a_i$ and $a_j$. It is clear that $G(a_i,a_j)$ is  a quotient of
the 2-generator Artin group $\DA_{m_{ij}}$, so that all equations between
words in the $\DA_{m_{ij}}$ also hold in $G(a_i,a_j)$; in fact it will follow from
Theorem~\ref{Wslred} that the two groups are isomorphic.

Now if $w$ is a 2-generator word in $a_i,a_j$,
we define $p(w)$ and $n(w)$ just as we did for words of $\DA_{m_{ij}}$
in Section~\ref{dihedral_sec}, we call $w$ critical if it satisfies the
definition of criticality of that section, and then we define $\tau(w)$ just 
as in that section. From Proposition ~\ref{critical_properties} we have $w =_G \tau(w)$.
We also define $\delta(\xi)$ for any subword $\xi$ of $w$, just as in
Section ~\ref{dihedral_sec}. We denote by $T_{ij}$ the set of critical words over $a_i,a_j$.

Of course we can define critical 2-generator words for any pair of
generators; we denote by $T$ the set of all such critical words (that is the
union of all $T_{ij}$).
The bijection $\tau$ from Section~\ref{dihedral_sec} is well defined on that set,
and the integer valued maps $p,n$ are well defined on the set of 2-generator
words.
We can also use the notation $\delta(\xi)$ without ambiguity,
for subwords $\xi$ of 2-generator words; even when $\xi$ itself involves only
one generator, it will always be clear which two generators are involved.

We shall say that a 2-generator word $w$ involving $a_i,a_j$ is
{\it 2-geodesic} if it is geodesic as a word in the
2-generator Artin group $\DA_{m_{ij}}$.
We know from the previous section that this is
the case if and only if $p(w) + n(w) \le m_{ij}$.
We do not know at this stage that such words are geodesics as elements of $G$,
but this will follow from Theorem~\ref{Wslred}.

Now suppose that $w$ is a freely reduced word over the Artin generators
and that $w= \alpha_1 u_1 \beta_1$ where $u_1 \in T_{i_1j_1}$ for
some $i_1,j_1$.
Then $\alpha_1 \tau(u_1)\beta_1$ may contain a critical
subword $u_2$ in a set $T_{i_2j_2}$ for which
$|\{i_1,j_1\} \cap \{i_2,j_2\}|=1$, where $u_2$ and $\tau(u_1)$
overlap in a single generator.
If $u_2$ overlaps the left hand end of $\tau(u_1)$ and, in
addition, the name of  $\l{\alpha_1}$ is not in $\{a_{i_1},a_{j_1}\}$
then we have a {\em critical left overlap}.
If $u_2$ overlaps the right hand end of $\tau(u_1)$ and, in
addition, the name of  $\f{\beta_1}$ is not in $\{a_{i_1},a_{j_1}\}$
then we have a {\em critical right overlap}.

We shall consider sequences
\begin{eqnarray*}&&\alpha_1u_1\beta_1,\\
\alpha_1\tau(u_1)\beta_1&=&\alpha_2u_2\beta_2,\\
\alpha_2 \tau(u_2) \beta_2&=& \alpha_3u_3\beta_3,\\&\ldots\\
\alpha_k \tau(u_k) \beta_k.
\end{eqnarray*}
of words that are all equal in the group, and
where either we have a critical left overlap at every step or a critical
right overlap at every step.

We call such a sequence a {\em leftward or rightward critical sequence}
of length $k$ for $w$.

For example, with $m_{12},m_{13},m_{23} = 3,4,5$ and writing $a,b,c$ for
$a_1,a_2,a_3$:
\begin{eqnarray*}
\alpha ca^2cab^{-1}c^{-1}b^2c(a^{-1}b^{-2}a) \beta,\\
\alpha ca^2ca(b^{-1}c^{-1}b^2cb)a^{-2}b^{-1} \beta,\\
\alpha (ca^2cac)bc^2b^{-1}c^{-1} a^{-2}b^{-1} \beta,\\
\alpha acac^2abc^2b^{-1}c^{-1} a^{-2}b^{-1} \beta
\end{eqnarray*}
is a leftward critical sequence of length 3 in which the words $u_1$, $u_2$, $u_3$ (defined
above) are
bracketed.

The following result, which we shall use in the proof of Theorem~\ref{Wslred},
is an easy consequence of Proposition~\ref{critical_properties}~(5).
We recall that $M = 2\max\{ m_{ij} \mid m_{ij} \ne \infty \}$.
\begin{lemma}\label{fellowtravel}
Suppose that $w'$ is derived from $w$ by the application of
a critical sequence. Then $w$ and $w'$ $M$-fellow travel.
\end{lemma}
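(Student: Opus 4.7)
My plan is to reduce the lemma to essentially a single application of Proposition~\ref{critical_properties}(5), by exploiting the fact that consecutive critical words in the sequence overlap in only one letter position. The key claim will be that, for every prefix length $i$, at most one step $w_{j-1}\to w_j$ of the sequence contributes a non-trivial $G$-distance between the two $i$-prefixes, so that the cumulative fellow-traveller distance never exceeds the single-step bound $2m_{ij}\le M$ given by Proposition~\ref{critical_properties}(5).

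I would first treat a rightward critical sequence (the leftward case is mirror-symmetric). Writing each step as $w_{j-1}=\alpha_j u_j\beta_j\to w_j=\alpha_j\tau(u_j)\beta_j$ and setting $L_j=|\alpha_j|+1$, $R_j=|\alpha_j|+|u_j|$, the word $u_j$ occupies positions $L_j,\dots,R_j$ of $w_{j-1}$. The single-generator right-overlap condition says the first letter of $u_{j+1}$ (in $w_j$) is the last letter of $\tau(u_j)$; thus $L_{j+1}=R_j$. Consequently the ``straddling intervals'' $S_j:=\{L_j,\dots,R_j-1\}$ are pairwise disjoint (in fact they tile $\{L_1,\dots,R_k-1\}$), and every prefix length $i$ belongs to at most one $S_j$.

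Now I fix $i$ and analyse $w_{j-1}(i)$ versus $w_j(i)$ step by step via a trichotomy. If $i\le L_j-1$, both prefixes are the length-$i$ prefix of $\alpha_j$, so they coincide as words. If $i\ge R_j$, both prefixes extend past $u_j$ respectively $\tau(u_j)$, and since $u_j=_G\tau(u_j)$ by Proposition~\ref{critical_properties}(1) they represent the same element of $G$. If $i\in S_j$, Proposition~\ref{critical_properties}(5) applied to $u_j$ yields $|w_{j-1}(i)^{-1}w_j(i)|_G\le 2m_{i_j j_j}\le M$. Since at most one step falls into the straddling case while every other step contributes either literal-word or $G$-equality, telescoping over $j$ immediately gives $|w_0(i)^{-1}w_k(i)|_G\le M$.

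The essential (and only) subtle point is the disjointness of the intervals $S_j$, which is exactly what the single-generator overlap buys us and explains why the author advertises the result as an ``easy consequence'' of Proposition~\ref{critical_properties}(5). A triangle inequality applied blindly would give only the weaker bound $kM$; it is the non-accumulation across disjoint straddling intervals that keeps the total distance bounded by $M$. Leftward sequences are handled identically, replacing the relation $L_{j+1}=R_j$ by $R_{j+1}=L_j$.
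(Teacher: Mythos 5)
Your argument is correct, and it supplies exactly the details the paper leaves implicit when it calls the lemma ``an easy consequence of Proposition~\ref{critical_properties}~(5)'': since $\tau$ preserves length and consecutive critical factors overlap in a single letter position, the straddling intervals are pairwise disjoint, so for each prefix length only one step of the sequence contributes a nonzero distance (bounded by $2m_{i_lj_l}\le M$), all other steps giving prefixes equal as words or equal in $G$. This is the intended argument, and your observation that a naive triangle inequality would only give $kM$ correctly identifies why the single-letter overlap condition is the essential point.
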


We call a critical sequence a {\it reducing sequence} if 
$\alpha_k\tau(u_k) \beta_k$
is either not freely reduced or is less
than $\alpha_1 u_1\beta_1$ lexicographically,
and in the first case call it a
{\it length reducing sequence}, in the second a
{\it lex reducing sequence}.
In general, a reducing sequence of either type might be either
leftward or rightward, and a lex reducing sequence might be
either leftward or rightward; but 
in this article, we shall reduce words to shortlex normal
form using a combination of
rightward length reducing sequences that spark off free reductions at
the right hand ends of subwords $\tau(u_k)$,
and leftward lex reducing sequences for which $\tau(u_k) \lexle u_k$.

Now we define $W$ to be the set of all freely reduced words $w$ that
admit no rightward length reducing sequence
or leftward lex reducing of any length $k \ge 1$.
Note that this agrees with the definition of $W$ in the 2-generator case
in Section~\ref{dihedral_sec}. We call the words in $W$ {\em critically reduced}.

The following is the first of our two main results: 

\begin{theorem}\label{Wslred}
Let $G$ be an Artin group of large type, defined over its standard generating
set, and let $W$ be the set of words just defined. Then
$W$ is the set of shortlex minimal representatives of the elements of $G$,
and $G$ is shortlex automatic.
\end{theorem}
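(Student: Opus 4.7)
The plan splits into three parts: (a) every shortlex-minimal representative lies in $W$; (b) every word in $W$ is shortlex-minimal; and (c) $W$ is regular and satisfies the fellow-traveller condition needed for shortlex automaticity. Part (a) is immediate from Lemma~\ref{fellowtravel}: if $w$ admitted a rightward length-reducing or leftward lex-reducing sequence, then the endpoint of that sequence would equal $w$ in $G$ but would be either shorter or lexicographically smaller than $w$, contradicting shortlex-minimality.

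For part (b) I would argue by induction on $|w|$, assuming every proper subword of $w$ is already known to be shortlex-minimal (and hence in $W$). Take $w\in W$ not shortlex-minimal. If $w$ is not geodesic, write $w=\alpha g$ with $\alpha$ geodesic. Then within the 2-generator subgroup containing the name of $g$ and the maximal 2-generator suffix of $\alpha$ ending involving $g$, Lemma~\ref{geodlem} supplies a critical suffix $u_1$ whose $\tau$-image ends with $g^{-1}$; this is the first step of a candidate rightward length-reducing sequence. If instead $w$ is geodesic but not shortlex-minimal, comparing $w$ letter by letter with a shortlex-smaller representative $v$ and applying the inductive hypothesis forces $w$ and $v$ to differ already at the first letter, and then Theorem~\ref{artin2} together with Lemma~\ref{geodlem} locate a critical prefix $u_1$ whose $\tau$-image begins with a lex-smaller letter, starting a candidate leftward lex-reducing sequence.

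The main obstacle will be that this initial $u_1$ is in general not a true suffix (or prefix) of $w$: letters involving generators different from those of $u_1$ may intervene between $u_1$ and the end of $w$, so a single $\tau$-move on $u_1$ need not yield the required reduction. One must propagate the reduction across those intervening letters via critical right or left overlaps, building a rightward or leftward critical sequence step by step. At each stage one has to verify, using $w\in W$ together with the inductive hypothesis on subwords of $w$, that a new critical $u_{k+1}$ must appear overlapping $\tau(u_k)$ in exactly the form of a critical right (respectively left) overlap, and that the process eventually reaches the boundary of $w$ and produces an actual free cancellation or actual lex decrease. Verifying this uses Corollary~\ref{critical_subword}, the behaviour of $\delta$ on 2-generator words, and a case analysis of how the maximal alternating subwords in successive 2-generator pairs must align. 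This is the technical heart of the argument, and I expect it will require most of the section.

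For part (c), regularity of $W$ follows because the defining conditions are detectable by a finite-state automaton: a critical subword $w_2$ of $w$ is 2-generator, and the data relevant to whether $\f{\tau(w_2)}\lexle\f{w_2}$ or whether $\l{\tau(w_2)}$ cancels with $\f{w_3}$ depends on a bounded amount of information at each end of $w_2$, while the middle portion is recognisable by tracking the current 2-generator pair and the associated $p,n$ counts; the same local analysis extends to longer reducing sequences, since each step involves bounded-size overlaps in neighbouring 2-generator pairs. The fellow-traveller condition for shortlex automaticity then follows from Lemma~\ref{fellowtravel}: given $w,v\in W$ with $v=_Gwh$, $h\in A$, the reduction from $wh$ to $v$ is effected by a single critical sequence (possibly together with one free reduction at the right), so $w$ and $v$ fellow travel at distance at most $M+1$. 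Combined with regularity of $W$, this establishes shortlex automaticity.
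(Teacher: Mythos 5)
Your part (a) and the overall three-part skeleton are fine, but part (b) contains a genuine gap, and it is exactly the point you flag as ``the technical heart'': you never supply the argument, and the route you sketch is not the one that works. Lemma~\ref{geodlem} is a purely 2-generator statement, so when $w\in W$ is non-geodesic (or geodesic but not lex-minimal) it only hands you a critical prefix or suffix of some 2-generator subword of $w$; the assertion that ``a new critical $u_{k+1}$ must appear overlapping $\tau(u_k)$ in exactly the form of a critical right (respectively left) overlap, and that the process eventually reaches the boundary of $w$'' is precisely the claim that every freely reduced non-geodesic (or non-lex-minimal) word admits a rightward length reducing (or leftward lex reducing) sequence. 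That claim is a \emph{consequence} of Theorem~\ref{Wslred}, and the paper deliberately does not prove it directly. Instead it defines the reduction map $\rho$ one letter at a time, proves the closure statement Proposition~\ref{Wclosedngen} (if $w\in W$ and $wg\notin W$ then a \emph{single} reducing sequence lands back in $W$), and then proves the two invariance statements $\rho(wgg^{-1})=w$ (Proposition~\ref{ac1}) and $\rho(w\,{}_m(a_i,a_j))=\rho(w\,{}_m(a_j,a_i))$ (Proposition~\ref{ac2}). Since any word is joined to its shortlex normal form by a chain of free insertions/deletions and relator replacements, $\rho$ of every word equals the normal form, and membership of $W$ forces shortlex minimality. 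Your induction on $|w|$ with ``all proper subwords shortlex minimal'' reproduces the proof of Theorem~\ref{artin2}, but in rank $\ge 3$ the step from ``$g^{-1}w$ is not geodesic'' to ``$w$ has a critical prefix with the right $\tau$-image'' simply is not available, and no amount of Corollary~\ref{critical_subword} bookkeeping is a substitute for the case analysis in Propositions~\ref{Wclosedngen}--\ref{ac2}.

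Part (c) has a related problem: your fellow-traveller bound rests on the assertion that ``the reduction from $wh$ to $v$ is effected by a single critical sequence, possibly together with one free reduction at the right''. That is again Proposition~\ref{Wclosedngen}, which your proposal nowhere establishes; without it the distance between $w$ and $\rho(wh)$ could a priori require iterated reductions and is not bounded by $M$. Your direct finite-state-automaton argument for regularity is also underspecified, since the defining condition on $W$ quantifies over reducing sequences of unbounded length $k$; the paper sidesteps this by first securing the $M$-fellow-traveller property and then characterising $W$ as the set of words admitting no shortlex-smaller $M$-fellow-travelling equal word, which is regular by the standard word-difference construction.
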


The complete proof contains a considerable amount of technical detail, which will be 
verified later in this section, as the proofs of three subsidiary results,
Propositions~\ref{Wclosedngen},  ~\ref{ac1} and \ref{ac2}.
But given those three propositions, the proof of the theorem itself is straightforward,
and so we give that now.

\begin{proofof}{Theorem~\ref{Wslred}}
The proof divides into two parts. First we show (1) that $W$ is the 
set of shortlex minimal representatives of the elements of $G$. Then (2)
we verify that $W$ is regular and satisfies the $M$-fellow traveller property.

We start our proof of (1) by defining a map $\rho: A^* \rightarrow W$; we shall verify
that application of $\rho$
reduces any word to shortlex minimal form.

First we define $\rho(w)=w$ for all $w \in W$. Note that $W$ is closed under subwords, and 
contains $\epsilon$, which is therefore fixed by $\rho$.

Now suppose that $w \in W$, and  that $g \in A$, but that $wg \not\in W$.
If $wg$ is not freely reduced, that the free reduction of $wg$ is a prefix
of $w$, and so is in $W$; we define $\rho(wg)$ to be that prefix.
Otherwise we can apply the following result (proof deferred):
\begin{proposition}
\label{Wclosedngen}
Suppose that $w \in W$ and $g \in A$ is such that $wg$ is freely reduced
but $wg \not \in W$. Then a single rightward length reducing or leftward
lex reducing sequence followed by a free reduction in the rightward case
can be applied to $wg$ to yield an element of $W$.
\end{proposition}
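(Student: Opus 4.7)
The plan is to use the hypothesis $w \in W$ to tightly constrain the shape of any bad sequence on $wg$, and then exhibit a specific such sequence whose application yields a word back in $W$. Since $wg \notin W$, it admits at least one rightward length reducing or leftward lex reducing sequence, and I would split the argument along those two cases.

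In the rightward length reducing case, since $w$ itself admits no such sequence, any rightward length reducing sequence on $wg$ must ``use'' the new letter $g$ in its terminal free reduction: either $g$ lies inside the final critical subword $u_k$, or the free cancellation occurs between $\l{\tau(u_k)}$ and $g$ itself. I would choose a sequence of minimal length $k$ with this property, apply it, and let $w_1$ be the resulting word (of length $|w|-1$ after the free reduction). In the leftward lex reducing case, the initial critical subword $u_1$ must involve $g$: otherwise $u_1$ sits entirely inside $w$, and since the sequence moves leftward so do all later $u_i$, giving a leftward lex reducing sequence on $w$, contradicting $w \in W$. Thus $u_1$ is a critical suffix of $wg$ ending with a letter of name equal to that of $g$, and I would take a maximal leftward lex reducing sequence starting from this $u_1$, calling the result $w_1$.

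To verify that $w_1 \in W$, I would argue by contradiction. Suppose $w_1$ admits some rightward length reducing or leftward lex reducing sequence. Because $\tau^2 = \mathrm{id}$ by Proposition~\ref{critical_properties}~(1), the sequence I applied to $wg$ can be ``inverted'' as a critical sequence that takes $w_1$ back to $wg$ (up to the single free reduction in the rightward case). Composing these should yield a bad sequence on $wg$ that differs from the one I started with, and, after pushing its effects away from the letter $g$, a bad sequence confined to $w$ itself --- contradicting $w \in W$. The main obstacle is exactly this combinatorial verification: I expect to need a careful case analysis of how critical 2-generator subwords over different generator pairs can overlap inside the modified region, using Corollary~\ref{critical_subword} to control the first and last letters of $\tau$-images of critical prefixes and suffixes, Proposition~\ref{critical_properties}~(3)--(4) to track names and signs at the boundaries, and Lemma~\ref{fellowtravel} to keep the interaction local. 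An induction on $|w|$ (or on the total length of the two composed sequences) is likely needed to handle the cases where the applied sequence and the putative new bad sequence jointly affect the same region, with the 2-generator result Theorem~\ref{artin2} providing the base-case analysis whenever the conflicting sequences both lie in a single $T_{ij}$.
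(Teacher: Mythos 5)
Your overall architecture matches the paper's: split into the rightward length reducing and leftward lex reducing cases, select an extremal sequence in each, and show that any residual reduction of the output could be transferred back to $w$, contradicting $w\in W$. But there are two concrete gaps. First, your selection criterion in the rightward case is the wrong one: you take a sequence of \emph{minimal length} $k$, whereas the paper takes the sequence whose first factor starts \emph{furthest to the right}. The distinction matters, because the positional optimality is exactly what guarantees (Lemma~\ref{lem1}~(1)) that no proper suffix of any $u_l$ is critical --- if $u_l$ had a critical proper suffix $u_0$, Corollary~\ref{critical_subword} would give another reducing sequence of the \emph{same} length $k$ starting further right, so minimality of $k$ rules nothing out. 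That structural fact is the foundation for everything that follows: it forces each $u'_l$ to satisfy $p(u'_l)+n(u'_l)<m$ and to be a maximal 2-generator subword, which is what confines any further reducing sequence of the output to the region $\alpha u'_1$ (Lemma~\ref{lem1}~(6),(7)). Without it your localization step has no support.

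Second, the verification that the output lies in $W$ --- which is the entire content of the proposition --- is deferred to ``a careful case analysis,'' and the one mechanism you propose for it does not work as stated. Inverting the applied sequence via $\tau^2=\mathrm{id}$ and composing with a putative new bad sequence does not yield a critical sequence: critical sequences must be uniformly leftward or uniformly rightward with a critical overlap at every step, and an arbitrary concatenation of $\tau$-moves is not of this form. The paper instead argues subcase by subcase: in Case~1 it uses Lemma~\ref{extension_lemma} to extend a prefix of the new sequence to a reducing sequence for $w$ itself; in Case~2(b) it shows the new sequence either lives in $w$ or extends the chosen sequence further left, contradicting optimality; and Case~2(a) (ruling out a rightward length reduction of $\rho_2(wg)$) requires a separate induction on the number $k$ of factors of the leftward sequence (Lemma~\ref{lem3}), not an induction on $|w|$. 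None of these devices appears in your outline, so the proof as proposed cannot be completed along the lines you describe without essentially rediscovering them.
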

In the first case of the proposition, $wg$ admits a rightward length reducing sequence
followed by a free reduction to a representative of $wg$ within $W$, which we shall call $\rho_1(wg)$. 
In the second case,
$wg$ admits a leftward lex reducing sequence to an element of $W$, which we shall call $\rho_2(wg)$.
We define $\rho(wg)$ to be $\rho_1(wg)$ in the first case, and $\rho_2(wg)$
in the second case, assuming that the first case does not also occur.

In each of the three situations just considered it is clear that $\rho(wg)$
is an element of $W$ that
represents the same group element as $wg$, and that $\rho(wg) \slexle wg$.

We can now extend the definition of $\rho$ to the whole of $A^*$ using the 
recursive rule
$\rho(wg) = \rho(\rho(w)g)$ for $w \in W$, $g \in A$. Then
at most $|w|$ successive reductions reduce $w$ to the element $\rho(w)$ of $W$,
which we call the {\em reduction} of $w$.

We see that $\rho(w)=_G w$, that $\rho(w) \slexlee w$,
for any word $w$, and hence that the shortlex minimal representative
of any element is fixed by application of $\rho$ and so must be in $W$. To
prove (1) we need only to verify that every word in $W$
is shortlex minimal.

Now suppose that $w'$ is a word over $A^*$ that is not shortlex minimal, and $w$ is the shortlex representative
of the group element represented by $w'$. We can define a chain of words
$w_0=w',w_1,\ldots,w_k=w$, where, for each $i=0,\ldots,k-1$, $w_i$ is
transformed to $w_{i+1}$ either by the insertion or deletion of
a subword $gg^{-1}$, for some $g \in A$, or by the replacement of a subword
${}_m(a_i,a_j)$ by a subword ${}_m(a_j,a_i)$, for some $i \neq j$ and $m=m_{ij}$.
That $\rho(w_i)=\rho(w_{i+1})$ is guaranteed by the two results,
Proposition~\ref{ac1} and Proposition~\ref{ac2} (proofs deferred):
\begin{proposition}\label{ac1}
$\rho(wgg^{-1}) = w$, $\forall w \in W, g \in A$.
\end{proposition}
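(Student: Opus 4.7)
The plan is to unravel the recursive definition via $\rho(wgg^{-1}) = \rho(\rho(wg)\,g^{-1})$, and split on the status of $wg$. Two cases are routine. If $wg$ is not freely reduced then $g = \l{w}^{-1}$, so by definition $\rho(wg) = \p{w}$, and hence $\rho(\p{w}\,g^{-1}) = \rho(\p{w}\,\l{w}) = \rho(w) = w$. If $wg \in W$ then $\rho(wg) = wg$, and since $wgg^{-1}$ is not freely reduced with free reduction $w$ (a prefix of $wg \in W$, hence itself in $W$ as $W$ is closed under subwords), the free-reduction clause in the definition of $\rho$ gives $\rho(wgg^{-1}) = w$.

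The substantive case is when $wg$ is freely reduced but not in $W$. Here $v := \rho(wg)$ is produced from $wg$ via either a rightward length-reducing sequence or a leftward lex-reducing sequence, by Proposition~\ref{Wclosedngen}, and we must verify $\rho(vg^{-1}) = w$. The key observation in either sub-case is that $w \in W$ forbids the reducing sequence on $wg$ from being carried out entirely inside $w$: the trailing $g$ must take part. In the rightward sub-case, a careful tracking of positions shows that $\beta_k = g$ (otherwise the analogous sequence inside $w$ alone would contradict $w \in W$), so $\l{\tau(u_k)} = g^{-1}$ cancels the trailing $g$, yielding $v = \alpha_k\,\p{\tau(u_k)}$ and therefore $vg^{-1} = \alpha_k\,\tau(u_k)$. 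Since $\tau$ is an involution by Proposition~\ref{critical_properties}(1), reversing the chain of $\tau$-moves transforms $vg^{-1}$ back to $w$; this reverse chain is a leftward lex-reducing sequence on $vg^{-1}$ (using Proposition~\ref{critical_properties}~(3)--(4) and the hypothesis $w \in W$ to control letters at the boundaries), so $\rho(vg^{-1}) = w$. The leftward lex-reducing sub-case is dual: the initial critical subword $u_1$ must include the trailing $g$ (else the whole sequence would lie inside $w$), the subsequent leftward moves do not touch the position where the $g^{-1}$ will be appended, and reversing the sequence on $vg^{-1}$ yields a rightward length-reducing sequence ending with a free reduction that produces $w$.

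The main obstacle is verifying that $\rho$, when processing $vg^{-1}$, in fact selects a reducing sequence whose outcome is $w$, rather than a competing sequence that would yield a different element of $W$. Ruling out such competitors relies on Proposition~\ref{critical_properties}~(3)--(4) (controlling first and last letters under $\tau$) together with Corollary~\ref{critical_subword} (on the boundary behaviour of overlapping critical subwords) and on $w \in W$; an induction on the length $k$ of the critical sequence may be needed to carry the argument through cleanly.
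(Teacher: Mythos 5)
Your skeleton coincides with the paper's: the two routine cases are handled identically, and in the substantive case you correctly observe that $\rho(wg)g^{-1}$ is exactly the word $w'$ obtained from $w$ by the forward chain of $\tau$-moves, so that the task reduces to showing $\rho(w')=w$ via the reversed chain. But the step you defer as ``the main obstacle'' is the entire content of the proof, and the tools you name for it do not suffice. The most serious omission is in the sub-case where $wg$ was reduced by a rightward length reducing sequence: there the reversed chain is a \emph{leftward lex reducing} sequence for $w'$, and by the definition of $\rho$ a rightward length reducing sequence for $w'$, if any existed, would take priority and produce something other than $w$. Ruling this out is not a matter of controlling first and last letters via Proposition~\ref{critical_properties}(3)--(4) or Corollary~\ref{critical_subword}; the paper needs Lemma~\ref{lem3} (itself a nontrivial induction on $k$ using Lemma~\ref{extension_lemma}), applied only after first establishing that the reversed sequence is the \emph{optimal} leftward one, so that $\rho_2(w')=w$ and a rightward sequence for $w'$ would force one for $w$, contradicting $w\in W$. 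Your proposal never confronts this cross-type priority.

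The optimality claim itself also needs an argument you have not supplied. One must show that the first $k-1$ reverse moves are forced because they act on maximal 2-generator subwords (Lemma~\ref{lem1}(6), Lemma~\ref{lem2}(6)), and that the final reverse move cannot be extended: if some critical word $\beta\tau(u_1)$ properly containing $\tau(u_1)$ began a longer leftward sequence, then $\beta$ would have to begin with a negative alternating word of length $n$, forcing $\beta\,{}_p(x,y)$ to be a critical subword of $w$ and hence giving $w$ a leftward lex reducing sequence --- contradiction with $w\in W$. A dual positional argument is needed in the other sub-case to show no rightward length reducing sequence for $\rho(wg)g^{-1}$ starts further right than the reversed one. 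These structural arguments about alternating subwords are where the work lies, and ``an induction on $k$ may be needed'' does not discharge them; as it stands the proposal establishes only that \emph{some} reducing sequence returns $w'$ to $w$, not that $\rho$ performs it.
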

\begin{proposition}\label{ac2}
$\rho(w\,{}_{m_{ij}}(a_i,a_j)) = \rho(w\,{}_{m_{ij}}(a_j,a_i)),\,
\forall w \in W,\,1 \le i,j \le n$.
\end{proposition}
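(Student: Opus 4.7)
The plan is to use the $2$-generator machinery of Section~\ref{dihedral_sec} to reduce the problem to an application of Theorem~\ref{artin2}. Set $m = m_{ij}$ and write $u = {}_m(a_i,a_j)$, $u' = {}_m(a_j,a_i)$. Both are positive critical words over $\{a_i,a_j\}$, and by the definition of $\tau$ on critical words of the form ${}_m(x,y)$ we have $\tau(u) = u'$; thus $wu$ and $wu'$ are related by a single $\tau$-move on the suffix. The base case $w = \epsilon$ is immediate from Theorem~\ref{artin2}: since $u$ and $u'$ lie in $\DA_m$ and no reduction step applied to a word over $\{a_i,a_j\}^{\pm 1}$ can introduce a generator outside this set, the computations of $\rho(u)$ and $\rho(u')$ take place entirely inside the $2$-generator setting, where whichever of $u, u'$ is shortlex-smaller lies in $W$ and the other reduces to it via a leftward lex reducing sequence of length one.

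For general $w \in W$, I would compute $\rho(wu)$ and $\rho(wu')$ by iteratively appending the letters of $u$ and $u'$ one at a time using the recursive identity $\rho(vg) = \rho(\rho(v)g)$ together with Propositions~\ref{Wclosedngen} and~\ref{ac1}. The aim is to maintain an invariant relating the intermediate reductions $w_k := \rho(w\,u[1..k])$ and $w'_k := \rho(w\,u'[1..k])$ --- ideally, that there is a common prefix $y \in W$ with $w_k = y\sigma_k$, $w'_k = y\sigma'_k$ for words $\sigma_k, \sigma'_k$ over $\{a_i,a_j\}^{\pm 1}$ whose group elements in $\DA_m$ differ by the action of $\tau$ on a canonical critical form. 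Once $k$ reaches $m$, the two suffixes $\sigma_m, \sigma'_m$ are related either trivially or by a critical $\tau$-move inside $\DA_m$, and Theorem~\ref{artin2} applied there forces $w_m = w'_m$.

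The main obstacle is the possibility of leftward cascading critical reductions. Appending a single letter can trigger a leftward lex reducing sequence of length greater than one which propagates into the prefix $w$ and may involve generators outside $\{a_i,a_j\}$, potentially breaking the clean $2$-generator form of the invariant. The key tool here is Lemma~\ref{fellowtravel}, which bounds the reach of any cascade via the $M$-fellow traveller property. One then proceeds by a careful case analysis depending on how far each cascade propagates, which generators appear in the overlapping region, and whether the cascades triggered in the two computations act in parallel on the common prefix, possibly invoking Corollary~\ref{critical_subword} to control how critical suffixes and prefixes interact under $\tau$. Since uniqueness of shortlex representatives is not yet available at this point in the paper, the synchronization has to be argued directly from the combinatorics of critical words and their overlaps established in Section~\ref{dihedral_sec}.
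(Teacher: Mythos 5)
Your overall strategy coincides with the paper's: append the letters of ${}_m(a_i,a_j)$ and ${}_m(a_j,a_i)$ one at a time, keep the two computations synchronized up to a common prefix times a $2$-generator suffix, and finish by applying Theorem~\ref{artin2} inside $\DA_m$. The problem is that your proposal stops exactly where the proof begins. The entire content of the proposition is the "careful case analysis" that you defer: the paper spends its whole proof showing that \emph{at most one} appended letter ever precipitates a leftward lex or rightward length reduction, that the corresponding reduction for the other word occurs after an appropriate shift (after ${m-n}$ versus ${m-n-k}$ or ${m-n+k}$ letters, where $k$ is the length of the maximal alternating suffix of $w$), and that the remaining letters either cancel freely or provoke nothing. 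This analysis splits according to whether $\l{w}$ has name outside $\{a_i,a_j\}$ (where Lemmas~\ref{lem1}(5) and~\ref{lem2}(5) guarantee that $\rho(wa_i)$ still ends in a letter whose name is outside the pair, so no later letter can interact with the prefix) or inside it (where one must track $p$, $n$, the alternating suffix length, and whether the prefix admits a rightward critical sequence raising $n$). None of this is present in your sketch, and without it the claimed invariant is unproved.

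Moreover, the tool you nominate for the hard part is the wrong one. Lemma~\ref{fellowtravel} bounds the \emph{distance} between a word and its image under a critical sequence; it does not bound how far to the left a leftward cascade rewrites the word (the number of factors $k$ in a critical factorisation is unbounded, and the rewritten region can cover almost all of $w$). What actually controls the situation are the structural statements of Lemmas~\ref{lem1} and~\ref{lem2}: parts (5)--(7), which pin down the name of the last letter of the reduction and confine any further reduction to the left of the rewritten region. Note also that the common prefix $y$ in your invariant need not be a prefix of $w$ itself: in the paper's Case 1 it is $\p{w''}$, where $w''$ is obtained from a prefix of $w$ by a rightward critical sequence, so establishing that the two computations share this prefix requires showing that the \emph{same} critical sequence for the prefix is triggered in both computations --- again a fact that must be extracted from the case analysis, not from fellow travelling.
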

It follows that $\rho(w')=\rho(w)=w$, and so that $w' \not \in W$.
This completes the proof of (1).

Now it follows from the combination of Proposition~\ref{Wclosedngen}
and Lemma~\ref{fellowtravel} 
that $w$ and $\rho(wg)$ $M$-fellow travel for any $w \in W$, $g \in A$.
Hence we can describe $W$ as the set of words $w$ for which there
is no word $w'$ with $w'=_G w$ and $w' \slexle w$ that $M$-fellow travels with
$w$. Using this description of $W$ we can construct a finite state automaton to recognise it; hence $W$ is regular, and we have completed the proof of (2).
So $G$ is shortlex automatic.
\end{proofof}

The verification of the theorem will be complete once the three propositions
used in its proof have been verified. 
Before we embark on these proofs, we shall introduce some
more detailed notation for critical sequences and prove some technical
results about rightward length reducing and
leftward lex reducing sequences.

We start by considering rightward critical sequences.
If $w$ admits a rightward critical sequence, then 
$w= \alpha w_1\cdots w_k\beta$ where:
\begin{mylist}
\item[(i)] For $1 \le l \le k$, $w_l$ is a word over generators
$a_{i_l},a_{j_l}$
\item[(ii)] For each $1 \le l < k$,
$|\{i_l,j_l\} \cap \{i_{l+1},j_{l+1}\} |=1$,
the name of the final letter of $w_l$ is $a_i$ with
$i \not \in \{i_{l+1},j_{l+1}\}$, and the name of the first letter of
$w_{l+1}$ is $a_j$ with $j \not \in \{i_{l},j_{l}\}$.
\end{mylist}
We call $\alpha w_1\cdots w_k\beta$ a {\em rightward critical factorisation}
of $w$, with {\em factors} $w_1,w_2,\ldots,w_k$, and
{\em first term} $w_1$.

The chain of $\tau$-moves transforms $w$ through the sequence of words 
\begin{eqnarray*}
 w=\alpha w_1w_2\cdots w_k\beta&=& \alpha u_1w_2\cdots w_k \beta,\\
\alpha\tau(u_1)w_2\cdots w_k\beta,\\
\alpha\p{ \tau(u_1) } \tau ( \l{ \tau(u_1) } w_2) w_3\cdots w_k\beta &=&
\alpha u'_1\tau(u_2)w_3\cdots w_k\beta,\\
\alpha\p{\tau(u_1)} \p{ \tau (u_2) } \tau (\l{ \tau(u_2)} w_3)\cdots w_k\beta
&=&
\alpha u'_1u'_2\tau(u_3)\cdots w_k \beta,\\
\ldots,\\
\alpha\p{ \tau(u_1)} \p{\tau(u_2)}  \cdots \p{\tau(u_{k-1})}\tau(u_k)\beta &=& 
\alpha u'_1 u'_2 \cdots u'_{k-1}\tau(u_k)\beta, 
\end{eqnarray*}
where we define $u_1=w_1$, $u_l=\l{\tau(u_{l-1})} w_l$ for $1 < l \le k$,
and $u'_l=\p{\tau(u_l)}$ for $1 \le l \le k$.
We notice that $|w_1|=|u_1|=|u'_1|+1$, 
and for $l>1$ $|w_l|=|u'_l|=|u_l|-1$.

This sequence is length reducing when $\l{\tau(u_k)}=\f{\beta}^{-1}$,
and in this case we call the
letter $\f{\beta}$ the {\em tail} of the sequence.
Then the free reduction of the final word in the sequence
is \[\alpha u'_1 u'_2 u'_3 \cdots u'_k \s{\beta} \]
 
Figure \ref{fig1} illustrates a rightward length reducing sequence.

When a sequence of this type reduces a word of the form $wg$ with $w \in W$, 
then $\beta$ must be the single letter $g$, and then
the tail is $g$ too, and the whole of $\beta$.

\setlength{\unitlength}{0.48pt}
\begin{figure}
\begin{center}
{
\small
\begin{picture}(800,150)(-2,-10)
\put(50,40){$\alpha$}
\qbezier(0,50)(50,15)(100,50)
\qbezier(100,50)(175,120)(250,80) 
\put(180,100){$w_1$}
\put(180,80){$u_1$}
\qbezier(100,50)(130,25)(180,40) \qbezier(180,40)(200,50)(250,80)
\put(136,48){$\tau(u_1)$}
\put(140,15){$u'_1$}
\qbezier(250,80)(295,110)(370,80) \put(300,100){$w_2$}
\put(300,80){$u_2$}
\qbezier(232,68)(260,25)(300,40) \qbezier(300,40)(320,50)(370,80)
\put(256,48){$\tau(u_2)$}
\put(260,15){$u'_2$}
\qbezier(370,80)(415,110)(490,80) \put(420,100){$w_3$}
\put(420,80){$u_3$}
\qbezier(351,68)(380,25)(420,40) \qbezier(420,40)(440,50)(490,80) 
\put(376,48){$\tau(u_3)$}
\put(380,15){$u'_3$}
\qbezier(490,80)(535,110)(610,80) \put(540,100){$w_4$}
\put(540,80){$u_4$}
\qbezier(471,68)(500,25)(540,40) \qbezier(540,40)(560,50)(610,80) 
\put(498,48){$\tau(u_4)$}
\put(500,15){$u'_4$}
\qbezier(610,80)(655,110)(730,80) \put(660,100){$w_k$}
\put(660,80){$u_k$}
\qbezier(592,68)(620,25)(660,40) \qbezier(660,40)(680,50)(730,80) 
\put(618,48){$\tau(u_k)$}
\put(640,17){$u'_k$}
\put(703,53){$g^{-1}$}
\put(745,68){$g$}
\put(745,90){$\beta$}
\qbezier(730,80)(770,80)(810,120)
\end{picture}
}
\caption{\label{fig1} Rightward length reducing sequence for $w$, rewriting
$w=\alpha w_1\cdots w_k\beta$ as $\alpha u'_1\cdots u'_{k-1}\tau(u_k){\beta}$,
enabling free reduction of $\l{\tau(u_k)}=g^{-1}$ with $\f{\beta}=g$, and so
reduction of $w$ to $\alpha u'_1\cdots u'_k\s{\beta}$.}
\end{center}
\end{figure}
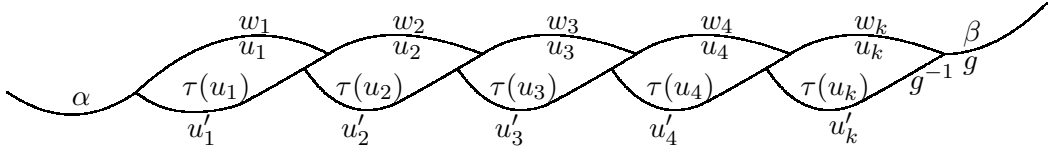

From now on,
whenever a word $w$ has a rightward critical factorisation
$\alpha w_1\cdots w_k\beta$ as above,
we will use the labels $u_1,\ldots,u_k$, $u'_1,\ldots,u'_k$ for subwords of
$w$ and its reductions through the rightward length reducing sequence
just as above. (And similarly, we shall define words
$\bar{u}_1,\ldots \bar{u}_{\bar{k}}$ and
$\bar{u}'_1,\ldots \bar{u}'_{\bar{k}}$ as labels for subwords associated
with a rightward critical factorisation 
of a word $\bar{\alpha}\bar{w}_1 \cdots \bar{w}_{\bar{k}}\bar{\beta}$.)

Now we consider leftward critical sequences.
If $w$ admits a leftward critical sequence then we can write
$w= \alpha w_k\cdots w_1\beta$ where:
\begin{mylist}
\item[(i)] For $1 \le l \le k$, $w_l$ is a word over generators
$a_{i_l},a_{j_l}$
\item[(ii)] For each $1 \le l < k$,
$|\{i_l,j_l\} \cap \{i_{l+1},j_{l+1}\} |=1$,
the name of the final letter of $w_{l+1}$ is $a_i$ with
$i \not \in \{i_l,j_l\}$, and the name of the first letter of
$w_{l}$ is $a_j$ with $j \not \in \{i_{l+1},j_{l+1}\}$.
\end{mylist}
We call $\alpha w_k\cdots w_1\beta$ a {\em leftward critical factorisation} 
of $w$, with {\em factors} $w_1,w_2,\ldots,w_k$, and {\em first term} $w_1$.

The chain of $\tau$ moves transforms $w$ through the sequence of words 
\begin{eqnarray*}
w=\alpha w_k\cdots w_2 w_1\beta &=& \alpha w_k\cdots w_2u_1\beta,\\
\alpha w_k \cdots w_2 \tau(u_1)\beta,\\
\alpha w_k \cdots w_3\tau(w_2\f{\tau(u_1)}) \s{\tau(u_1)}\beta &=&
\alpha w_k \cdots w_3\tau(u_2) u'_1\beta
,\\
\alpha w_k \cdots \tau(w_3\f{\tau(u_2)}) \s{\tau(u_2)} \s{\tau(u_1)}\beta &=&
\alpha w_k \cdots \tau(u_3) u'_2 u'_1\beta,\\
\ldots,\\
\alpha \tau(u_k)\s{\tau(u_{k-1})}\cdots \s{\tau(u_2)} \s{\tau(u_1)}\beta &=&
\alpha \tau(u_k)u'_{k-1}\cdots u'_2 u'_1\beta
\end{eqnarray*}
where we define $u_1=w_1$, $u_l=w_l\f{\tau(u_{l-1})} $ for $1 < l \le k$,
and $u'_l=\s{\tau(u_l)}$ for $1 \le l < k$. (We don't need to define $u_k'$
in this case.)
We notice that $|w_1|=|u_1|=|u'_1|+1$, 
and for $l>1$ $|w_l|=|u'_l|=|u_l|-1$.

The sequence is lex reducing when $\f{\tau(u_k)}$ is earlier
in the lexicographic order of generators than $\f{w_k}$,

Figure \ref{fig2} illustrates the leftward critical sequence.
\setlength{\unitlength}{0.48pt}
\begin{figure}
\begin{center}
{
\small
\begin{picture}(800,150)(-840,-10)
\put(-60,60){$\beta$}
\qbezier(-100,50)(-60,80)(-30,120)
\qbezier(-100,50)(-175,120)(-250,80) 
\put(-180,100){$w_1$}
\put(-180,76){$u_1$}
\qbezier(-100,50)(-130,25)(-180,40) \qbezier(-180,40)(-200,50)(-250,80)
\put(-185,46){$\tau(u_1)$}
\put(-180,15){$u'_1$}
\qbezier(-250,80)(-295,110)(-370,80) \put(-300,100){$w_2$}
\put(-300,80){$u_2$}
\qbezier(-231,68)(-260,25)(-300,40) \qbezier(-300,40)(-320,50)(-370,80)
\put(-305,48){$\tau(u_2)$}
\put(-300,15){$u'_2$}
\qbezier(-370,80)(-415,110)(-490,80) \put(-420,100){$w_3$}
\put(-420,80){$u_3$}
\qbezier(-351,68)(-380,25)(-420,40) \qbezier(-420,40)(-440,50)(-490,80) 
\put(-425,48){$\tau(u_3)$}
\put(-420,15){$u'_3$}
\qbezier(-490,80)(-535,110)(-610,80) \put(-540,100){$w_4$}
\put(-540,80){$u_4$}
\qbezier(-471,68)(-500,25)(-540,40) \qbezier(-540,40)(-560,50)(-610,80) 
\put(-545,48){$\tau(u_4)$}
\put(-540,15){$u'_4$}
\qbezier(-610,80)(-655,110)(-730,80) \put(-660,100){$w_k$}
\put(-660,80){$u_k$}
\qbezier(-591,68)(-620,25)(-660,40) \qbezier(-660,40)(-680,50)(-728,80) 
\put(-665,48){$\tau(u_k)$}
\put(-780,70){$\alpha$}
\qbezier(-730,80)(-780,45)(-830,80)
\end{picture}
}
\caption{\label{fig2} Leftward lex reducing sequence, reducing $w=\alpha w_k\cdots w_1\beta$ to $\alpha \tau(u_k)u'_{k-1}\cdots u'_1\beta$}
\end{center}
\end{figure}
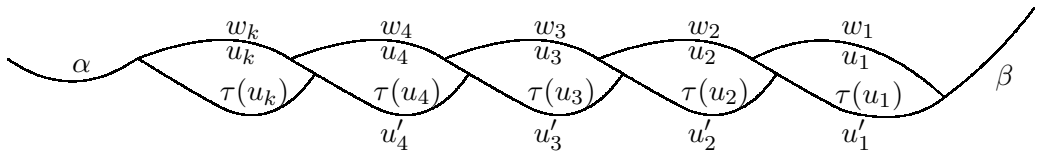

From now on,
whenever a word $w$ has a leftward critical factorisation
$\alpha w_k\cdots w_1\beta$ as above,
we will use the labels $u_1,\ldots,u_k$, $u'_1,\ldots,u'_{k-1}$ for subwords of
$w$ and its reductions through a leftward lex reducing sequence
as defined above. (And similarly, we shall define words
$\bar{u}_1,\ldots \bar{u}_{\bar{k}}$ and
$\bar{u}'_1,\ldots \bar{u}'_{\bar{k}-1}$ as labels for subwords associated
with a leftward critical factorisation of a word
$\bar{w} = \bar{\alpha}\bar{w}_{\bar{k}}\cdots \bar{w}_{1}\bar{\beta}$.)

Of course this notation is analogous to that used for rightward
critical factorisations, but with some differences; these should
not cause problems, since it will always be clear which type of
factorisation is being considered.

The following four technical results are used  in the proofs of
the three propositions,
Propositions~\ref{Wclosedngen},  ~\ref{ac1} and \ref{ac2}.

\begin{lemma}\label{rrslem}
Suppose that $wg$ admits a rightward length reducing sequence,
with corresponding factorisation $\alpha w_1\ldots w_kg$ of $wg$, and
notation as above.  Then the 
2-generator suffix $w_kg$ of $wg$ satisfies $p(w_kg)+n(w_kg) \ge m$,
and hence contains a critical subword.
\end{lemma}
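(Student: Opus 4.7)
Let $m := m_{i_k j_k}$. The plan is to show that $u_k g$ is freely reduced but non-geodesic in $\DA_m$, apply the Mairesse--Math\'eus criterion $p+n \le m$, and then transfer the resulting inequality from $u_k g$ to its suffix $w_k g$, losing at most $1$ from $p + n$ along the way.

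The length-reducing hypothesis gives $\l{\tau(u_k)} = g^{-1}$, so the name of $g$ lies in $\{a_{i_k}, a_{j_k}\}$, confirming that $w_k g$ is a 2-generator word in $\DA_m$. Since $u_k$ is critical, Proposition~\ref{critical_properties}(3) gives that $\l{u_k}$ and $\l{\tau(u_k)}$ have distinct names; hence $\l{u_k} \ne g^{-1}$ and $u_k g$ is freely reduced. Proposition~\ref{critical_properties}(1) gives $u_k =_{\DA_m} \tau(u_k)$, and it is clear from the definitions that $|\tau(u_k)| = |u_k|$; thus the element of $\DA_m$ represented by $u_k g$ has length at most $|\p{\tau(u_k)}| = |u_k| - 1 < |u_k g|$, so $u_k g$ is non-geodesic, and hence $p(u_k g) + n(u_k g) \ge m + 1$.

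For $k = 1$ we have $w_k = u_1 = u_k$, so $w_k g = u_k g$ and the bound $\ge m+1$ is immediate. For $k > 1$, $w_k g$ is obtained from $u_k g$ by deleting the single letter $h = \l{\tau(u_{k-1})}$ at the front; since $h$ is either positive or negative, removing it can reduce only whichever of $p, n$ corresponds to its sign, and by at most $1$. This gives $p(w_k g) + n(w_k g) \ge p(u_k g) + n(u_k g) - 1 \ge m$. Finally $w_k g$, as a suffix of the freely reduced word $wg$, is itself freely reduced, so the last clause of Proposition~\ref{critical_properties} provides the required critical subword. The most delicate step is this transfer from $u_k g$ to $w_k g$: the key observation is that a single letter has one sign and so contributes to at most one of $p, n$, preventing the bound from deteriorating by more than $1$.
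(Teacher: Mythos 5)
Your proof is correct and follows essentially the same route as the paper's: deduce that $u_kg$ is non-geodesic because $\tau(u_k)g$ is not freely reduced, apply the $p+n$ criterion, and pass to the suffix $w_kg = \s{u_k}g$ (for $k>1$) losing at most $1$. Your explicit verification that $u_kg$ is freely reduced (needed to invoke the Mairesse--Math\'eus criterion) is a detail the paper leaves implicit, but the argument is the same.
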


\begin{proof} Since $\tau(u_k)g$ is not freely reduced, it is
not 2-geodesic and hence neither is $u_kg$. So
$p(u_kg) + n(u_kg)>m$, and hence 
$p(\s{u_k}g) + n(\s{u_k}g) \geq m$.
Since $\s{u_k}=w_k$ when $k>1$, while $w_1=u_1$,
the result now follows immediately.
\end{proof}

We call a rightward length reducing sequence for $wg$ {\em optimal}
if the left hand end of $w_1$ is further right in $w$ than in any other such factorisation.
We call a leftward lex reducing sequence for $wg$ {\em optimal}
if the left hand end of $w_k$ is further left in $w$ than in any other such factorisation.

\begin{lemma}\label{lem1}
Suppose that $wg$ admits an optimal rightward length reducing sequence,
with corresponding factorisation $\alpha w_1\cdots w_kg$ of $wg$, and
notation as above.
Then for each $l$ with $1 \le l \le k$:
\begin{mylist}
\item[(1)] No proper suffix of $u_l$ is critical;\\
hence
$u_l$ either has the form
${}_p(x,y)\xi (z^{-1},t^{-1})_n$  with $p>0$\\ or the form
${}_n(x^{-1},y^{-1})\xi (z,t)_p$ with $n>0$,
 and $\{x,y\}=\{z,t\} = \{a_{i_l},a_{j_l}\}$.
\item[(2)] $u_l'$ involves both of the generators $a_{i_l}$ and $a_{j_l}$.
\item[(3)] $p(u_l')+n(u_l') < m$.
\item[(4)] When $l>1$, $u_l'$  begins with a letter whose name is not in
$\{ a_{i_{l-1}}, a_{j_{l-1}}\}$.
\item[(5)] When $l<k$, each $u_l'$  ends with a letter whose name is not in
$\{ a_{i_{l+1}}, a_{j_{l+1}}\}$ and $u_k'$ ends with a letter 
with a different name from $g$.
\item[(6)] When $k>1$,
$w_2,\ldots, w_kg$ are maximal 2-generator subwords of $wg$,
and $u'_2,\ldots, u'_k$ are maximal 2-generator subwords of its reduction
$\alpha u'_1 \cdots u'_k$.
\item[(7)] If $\alpha u'_1\cdots u'_k$ admits a further left lex reducing
or right length reducing
sequence, then all of the factors of that sequence, as well as its tail when
length reducing, are contained within $\alpha u'_1$.
\end{mylist}
\end{lemma}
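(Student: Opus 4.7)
The plan is to prove the seven parts in sequence, with (1) the structural linchpin deduced from optimality, parts (2)--(5) direct consequences of (1) combined with Proposition~\ref{critical_properties}, (6) a straightforward unpacking of the overlap conditions reinforced by (4) and (5), and (7) the key technical point requiring the combined force of the earlier parts.

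For (1), the approach is contradiction. Since $|u_l|=|w_l|+1$ for $l>1$ (and $u_1=w_1$), any proper suffix of $u_l$ is a suffix of $w_l$; if such a proper critical suffix $v$ existed, splitting $w_l=w_l'v$ would give a new rightward critical factorisation with first factor $v$ starting strictly to the right of $w_1$. By Corollary~\ref{critical_subword}, $\l{\tau(v)}=\l{\tau(u_l)}$, so the subsequent $\tau$-moves produce the same $u_{l+1},\ldots,u_k$ and the new sequence is still length reducing with tail $g$, contradicting optimality. The form description then follows, because a positive critical word of shape $\xi(x,y)_m$ (or negative $\xi(x^{-1},y^{-1})_m$) with nonempty $\xi$ has the proper critical suffix $(x,y)_m$ (resp.\ $(x^{-1},y^{-1})_m$), so only the two listed forms survive.

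For (2)--(5), the plan is direct computation from the explicit form of $u_l$ in (1) and the $\tau$-formulae. In (2), both generators appear in the alternating end-block of $\tau(u_l)$ and so survive truncating its last letter. For (3), Proposition~\ref{critical_properties}(2) gives $p(\tau(u_l))+n(\tau(u_l))=m$, and deleting the final letter strictly shortens the alternating block it lies in, so $p(u_l')+n(u_l')<m$. For (4), the first letter of $u_l$ is $\l{\tau(u_{l-1})}$, whose name is forced by the overlap condition to be the common generator of the pairs $\{a_{i_{l-1}},a_{j_{l-1}}\}$ and $\{a_{i_l},a_{j_l}\}$; Proposition~\ref{critical_properties}(3) then forces $\f{u_l'}=\f{\tau(u_l)}$ to have the complementary name, which lies outside $\{a_{i_{l-1}},a_{j_{l-1}}\}$. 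For (5) with $l<k$, the last letter of $w_l$ has name outside $\{a_{i_{l+1}},a_{j_{l+1}}\}$ by overlap, and the alternating-end structure of $\tau(u_l)$ forces $\l{u_l'}$ (the penultimate letter of $\tau(u_l)$) to have the same name; for $l=k$ the final letter of $\tau(u_k)$ is $g^{-1}$ and the penultimate has a different name. Part (6) then follows at once: (4)--(5) and the overlap conditions say each $w_l$ for $l>1$ and each $u_l'$ for $l>1$ is bordered by letters whose names lie outside its own generator pair.

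The main obstacle is (7). The plan is to show that any critical subword of $\alpha u_1'\cdots u_k'$ must lie entirely within $\alpha u_1'$. A critical subword wholly inside some $u_l'$ with $l\geq 2$ is ruled out by (3), since both $p$ and $n$ of any subword are bounded by those of the containing word, so no subword of $u_l'$ can have $p+n=m$. A critical subword crossing the boundary between $u_l'$ and $u_{l+1}'$ would need to be over a single 2-generator pair whose intersection with each of $\{a_{i_l},a_{j_l}\}$ and $\{a_{i_{l+1}},a_{j_{l+1}}\}$ contains the letter immediately on the corresponding side of the boundary; but by (4) and (5) these two boundary letters have names lying in the one-element symmetric difference of the two pairs, and no 2-generator pair can cover both. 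Induction on the number of $\tau$-moves in any further reducing sequence, using the same boundary obstruction at each step, then confines all its factors, and in the length reducing case its tail, to $\alpha u_1'$.
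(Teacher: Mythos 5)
Your architecture is the same as the paper's --- (1) from optimality via Corollary~\ref{critical_subword}, (2)--(5) by inspecting the explicit forms, (6) from (4)--(5), (7) from (3) plus maximality --- and parts (1), (4) and (6) are fine. But several steps fail precisely in the edge cases where the largeness hypothesis $m\ge 3$ is doing real work, and these are the cases the paper must argue separately. For (2), ``both generators appear in the alternating end-block \ldots and survive truncating its last letter'' requires $p\ge 3$; when $m=3$, $p=2$, $n=1$ the truncated end-block and the front block ${}_n(y^{-1},x^{-1})$ each involve only the generator $y$, and one must argue (as the paper does) that $\xi$ is nonempty, cannot begin with $x$ or end with $x^{-1}$, hence involves $y$, so that $\delta(\xi)$ supplies the missing generator. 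For (5), the ``penultimate letter lies in the alternating end-block'' argument needs $p\ge 2$; when $p=1$ the penultimate letter of $\tau(u_l)$ lies in $\delta(\xi)$ or in the front block, and a separate analysis (showing $\xi$ is empty or negative and $\l{\delta(\xi)}=t^{-1}$) is required. For (3), shortening the terminal alternating block does not by itself decrease $p(\tau(u_l))$: you must rule out a second positive alternating subword of length $p$ elsewhere in $\tau(u_l)$, which is where (1) gets used (the paper instead deduces $p(\s{u_l})+n(\s{u_l})<m$ from (1) and transfers that to $\p{\tau(u_l)}$).

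The more serious problems are in (7). First, the assertion that ``no 2-generator pair can cover both'' boundary letters is false: the pair consisting of the two non-common generators contains both $\l{u'_l}$ and $\f{u'_{l+1}}$. What is true is that any 2-generator subword crossing the boundary must consist of a power of one letter followed by a power of another, hence has $p+n\le 2<m$ and so can contain no critical subword --- again the largeness hypothesis. Second, your closing induction does not handle a rightward length reducing sequence: its later factors $\bar w_2,\ldots,\bar w_{\bar k}$ are not critical subwords of $\alpha u'_1\cdots u'_k$ (only the words $\bar u_l$, which live in the successively modified words, are critical), so confining the first, leftmost factor to $\alpha u'_1$ says nothing about the subsequent factors, which extend to the right. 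The paper closes this with Lemma~\ref{rrslem}: the rightmost factor together with the tail satisfies $p+n\ge m$ and therefore contains a critical subword of the word being reduced, which forces that factor --- and hence everything to its left --- into $\alpha u'_1$. Some argument of this kind is indispensable here.
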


\begin{proof}
The fact (1) that no proper suffix of any $u_l$ is critical follows from
the optimality of the chosen sequence.  For if $u_0$ is a proper suffix of
$u_l$ that is critical, then $\tau(u_0)$, like $\tau(u_l)$, is critical,
and Corollary~\ref{critical_subword} tells us that $\tau(u_0)$ ends in the same
letter as $\tau(u_l)$, and hence also
has critical overlap with $w_{l+1}$. Since $u_0$ is also a suffix of $w_l$,
$\alpha'u_0w_{l+1}\cdots w_kg$ is the factorisation associated with
a rightward length reducing sequence for $wg$, where
$\alpha'= \alpha w_1\cdots w_{l-1}w_0$, for some prefix $w_0$ of $w_l$,
and the optimality of the chosen sequence is contradicted.

Once it is clear that $u_l$ has no critical suffix it is immediate that it
has one of the two given forms.
From now on we shall assume that it has the first form
${}_p(x,y)\xi (z^{-1},t^{-1})_n$  with $p>0$.

(2) is clear except possibly when $m = 3$ and $p=2,n=1$
with $u_l = xy\xi y^{-1}$. But in that case, $\xi$ is nonempty
and cannot start with $x$ or end with $x^{-1}$,
so $\xi$ must involve the generator $y$ and then
$\p{\tau(u_l)} $ involves both $x$ and $y$. So (2) holds.
(But note that (2) would not necessarily hold when $m = 2$, so
we are using the largeness assumption here.) 

If $p(\s{u_l})+n(\s{u_l})\geq m$ then either $\s{u_l}$ itself or some
suffix of it is critical (since $u_l$ is already critical),
and we have already excluded this possibility.
Hence $p(\s{u_l})+n(\s{u_l})<m$, so $p(\p{\tau(u_l)})+n(\p{\tau(u_l)})<m$
and (3) holds.

(4) follows immediately from the fact that the first letters of the
critical words
$u_l$ and $\tau(u_l)$ have different names.

Since $\p{\tau(u_l)}$ ends with $(z,t)_{p-1}$, we see that (5)
holds except possibly when $p=1$ and $n=m-1$. In that case, $p(\s{u_l} ) < p$
(which follows from (3)) implies that $\xi$ is either empty or a negative word.
If $\xi$ is empty, then $\p{\tau(u_l)}$ must end with $t^{-1}$ or else
$\tau(u_l)$ would not be freely reduced. Otherwise, the the last
letter of $\xi$ must be the same as the first letter of $(z^{-1},t^{-1})_n$
(since otherwise we would have a longer negative alternating word),
and hence, for both odd and even $m$, $\l{\delta(\xi)} = t^{-1}$, so
(5) holds in all cases.

(6) follows immediately from (4) and (5).

For (7) we may assume that $k>1$, or there is nothing to prove.
(3) implies that none of $u'_2,\ldots,u'_k$ can contain critical subwords.
Since they are maximal 2-generator subwords within $wg$ their concatenation
cannot contain or intersect any critical subword (where we have once again
used the largeness condition). 
Now the first term of any further reducing sequence is critical
so must be disjoint
from the suffix $u'_2\cdots u'_k$  of the reduction of $wg$.
If that sequence is leftward then this implies that the whole sequence
is to the left of the suffix $u'_2\cdots u'_k$. If it is rightward length
reducing
then Lemma~\ref{rrslem} tells us that its rightmost factor
must contain a critical subword, hence cannot intersect the suffix
$u'_2\cdots u'_k$ and must be to its left.
Hence (7) is proved.
\end{proof}

\begin{lemma}\label{lem2}
Suppose that $wg$ admits a leftward lex reducing sequence,
with corresponding factorisation $\alpha w_k\cdots w_1$ of $wg$, and
notation as above, and that $w$ admits no leftward lex reducing sequence.
Then for each $l$ with $1 \le l \le k$:
\begin{mylist}
\item[(1)] No proper prefix of $u_l$ is critical;\\
hence $u_l$ either has the form
${}_p(x,y)\xi (z^{-1},t^{-1})_n$ with $n>0$\\
or the form ${}_n(x^{-1},y^{-1})\xi (z,t)_p$ with $p>0$,\\ where
$\{x,y\}=\{z,t\} = \{a_{i_l},a_{j_l}\}$.
\item[(2)] $u_l'$ involves both of the generators
$a_{i_l}$ and $a_{j_l}$ when $l<k$.
\item[(3)] $p(u_l')+n(u_l') < m$ when $l<k$.
\item[(4)] When $l<k$, $u_l'$ begins with a letter whose name is not in
$\{ a_{i_{l+1}}, a_{j_{l+1}}\}$.
\item[(5)] When $1<l<k$, $u_l'$ ends with a letter whose name is not in
$\{ a_{i_{l-1}}, a_{j_{l-1}}\}$, and $u_1'$ ends with a letter
with a different name from $g$.
\item[(6)] When $k>1$, $w_1,\ldots w_{k-1}$  are maximal 2-generator subwords
of $wg$.  and $u'_1,\ldots u'_{k-1}$ are maximal 2-generator subwords of its
reduction $\alpha \tau(u_k) \cdots u'_1$.
\item[(7)] If $\alpha \tau(u_k)\cdots u'_1$ admits a further left lex reducing
or right length reducing
sequence, then all of the factors of that sequence, as well as the tail if it is
length reducing,
are contained within $\alpha \tau(u_k)$.
\end{mylist}
\end{lemma}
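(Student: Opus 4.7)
The plan is to mirror the proof of Lemma~\ref{lem1}, swapping the roles of prefix and suffix, with the hypothesis that $w$ admits no leftward lex reducing sequence replacing the optimality assumption used there. I shall verify (1)--(7) in order, since each depends on the previous items.

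For (1), I would suppose that some $u_l$ has a critical proper prefix $u_0$. Since $u_l=w_l$ when $l=1$, and $u_l=w_l\f{\tau(u_{l-1})}$ when $l>1$, $u_0$ must lie entirely within $w_l$. By Corollary~\ref{critical_subword}, $\f{\tau(u_0)}=\f{\tau(u_l)}$, so the factorisation obtained by starting at $u_0$ and continuing with the unchanged words $u_{l+1},\ldots,u_k$ is still a valid leftward critical factorisation: the overlap conditions are preserved, and the critical subwords $u_{l+1},\ldots,u_k$ (which depend on $\f{\tau(u_l)}$ only) are literally the same. The new sequence is still lex reducing, since $u_k$ and $w_k$ are unchanged, and it lies entirely within $w$, because $g=\l{w_1}$ sits strictly to the right of $u_0$ in every case (to the right of $w_l$ when $l>1$; as the last letter of $w_1$, to the right of the proper prefix $u_0\subsetneq w_1$ when $l=1$). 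This contradicts the hypothesis on $w$. The stated form of $u_l$ then follows by a short case check: any critical form with $n=0$ (respectively $p=0$) and nonempty $\xi$ would contain ${}_m(x,y)$ or ${}_m(x^{-1},y^{-1})$ as a proper critical prefix, and the corner cases where $u_l$ itself is a single alternating word of length $m$ admit the alternative form with $p>0$ (or $n>0$).

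Items (2)--(5) follow by arguments directly analogous to the corresponding parts of Lemma~\ref{lem1}, with prefixes and suffixes swapped. For (3), (1) yields $p(\p{u_l})+n(\p{u_l})<m$, and transferring the alternating subwords across $\tau$ gives $p(\s{\tau(u_l)})=p(\p{u_l})$ and $n(\s{\tau(u_l)})=n(\p{u_l})$. For (4), Proposition~\ref{critical_properties}(3) shows that the first letter of $\tau(u_l)$, which by the overlap with $w_{l+1}$ has the common generator name in $\{a_{i_l},a_{j_l}\}\cap\{a_{i_{l+1}},a_{j_{l+1}}\}$, is followed by a letter whose name is the other generator in $\{a_{i_l},a_{j_l}\}$, hence outside $\{a_{i_{l+1}},a_{j_{l+1}}\}$; the boundary cases with $p=1$ or $n=1$ and empty $\xi$ are handled exactly as in Lemma~\ref{lem1}(5), using $m\ge 3$. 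Item (2) also uses $m\ge 3$ to rule out the degenerate case in which dropping the first letter of $\tau(u_l)$ would eliminate a generator, and (6) is immediate from (4) and (5).

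The main obstacle is (7), which I expect to mirror the corresponding part of Lemma~\ref{lem1} closely. By (3) each $u'_i$ satisfies $p(u'_i)+n(u'_i)<m$, so contains no critical subword; by (4), (5) together with largeness ($m\ge 3$), consecutive $u'_i$ belong to 2-generator pairs that share only one generator, so no 2-generator subword of the reduction $\alpha\tau(u_k)u'_{k-1}\cdots u'_1$ can straddle a boundary between consecutive $u'_i$, and consequently no critical subword can either. Hence the first term of any further leftward lex reducing or rightward length reducing sequence must lie entirely inside $\alpha\tau(u_k)$; in the leftward case the remaining factors are forced to the left of $\tau(u_k)$, and in the rightward case Lemma~\ref{rrslem} pins the rightmost factor (and hence the tail) inside $\alpha\tau(u_k)$ as well.
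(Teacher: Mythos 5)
Your proposal is correct and follows exactly the route the paper intends: the published proof of this lemma simply says it mirrors Lemma~\ref{lem1}, noting only that in (1) the absence of a critical proper prefix comes from the hypothesis that $w$ admits no leftward lex reducing sequence (rather than from optimality), and that in (7) one works with the suffix $u'_{k-1}\cdots u'_1$ — both of which you identify and carry out in detail. Your explicit construction in (1) of a shorter leftward lex reducing sequence for $w$ from a critical proper prefix $u_0$, using Corollary~\ref{critical_subword} to keep $u_{l+1},\ldots,u_k$ unchanged, is precisely the intended argument.
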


\begin{proof}
This is very similar to the previous proof, so we shall omit it.

Note, however, that in (1) the fact that $u_l$ has no critical
prefix follows from the lack of a left lex reducing sequence for $w$.

In the proof of (7) we consider of course the suffix
\[u'_{k-1} \cdots u'_2u'_1\]  of the reduction; otherwise the argument
is identical. 
\end{proof}

\begin{lemma}\label{extension_lemma}
Suppose that $w$ admits a rightward critical sequence
with corresponding factorisation $\alpha w_1\cdots w_k$, and
whose application to $w$ transforms it to a word ending in $g$.
Let $\zeta$ be a non 2-geodesic 2-generator word with $\f{\zeta}=g$,
for which $\s{\zeta}$ is 2-geodesic, and suppose that
$w\s{\zeta}$ is freely reduced.
Then the given sequence for $w$ extends to
a rightward length reducing sequence for $w\s{\zeta}$ of length $k+1$.
\end{lemma}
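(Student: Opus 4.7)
The plan is to construct the $(k{+}1)$-th critical factor explicitly from the structure of $\zeta$. Since $\zeta = g\s{\zeta}$ is not 2-geodesic while $\s{\zeta}$ is, we must have $p(\s{\zeta})+n(\s{\zeta}) = m$ (where $m$ is the exponent of the two-generator Artin group containing $\zeta$), and $g$ must extend the maximal alternating block at the front of $\s{\zeta}$ by one; in particular $\zeta$ is necessarily unsigned, since any positive or negative 2-generator word is automatically 2-geodesic. Applying Lemma~\ref{geodlem} in this 2-generator group to $g\s{\zeta}$ then yields a critical prefix $v$ of $\s{\zeta}$ with $\f{\tau(v)} = g^{-1}$; I would take $v$ to be the minimal such prefix, i.e.\ the one arising from the proof of Lemma~\ref{geodlem}: the shortest prefix of $\s{\zeta}$ that captures both the maximal alternating block at the front and the first opposite-sign alternating block of length $n(\s{\zeta})$.

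Next I would set $w_{k+1} := \p{v}$ and $u_{k+1} := g\,w_{k+1} = g\,\p{v}$, and verify that $u_{k+1}$ is critical. In the case $g = x \in X$ (the case $g\in X^{-1}$ being entirely analogous), writing $v = {}_p(y,x)\,\xi\,(z^{-1},t^{-1})_n$ gives $u_{k+1} = {}_{p+1}(x,y)\,\xi\,(z^{-1},t^{-1})_{n-1}$, which has the shape of an unsigned critical word with $p' = p+1$ and $n' = n-1$, so $p'+n' = m$. The substantive point is to check that $p(u_{k+1}) = p'$ and $n(u_{k+1}) = n'$ with equality: minimality of $v$ is what prevents $\xi$ from harbouring an alternating negative block of length $n$, which would otherwise force $n(u_{k+1})\geq n$ and push $u_{k+1}$ out of the 2-geodesic range. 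Free reduction of $u_{k+1}$ follows from its being a prefix of $\zeta$, and $\zeta$ is freely reduced because $w\s{\zeta}$ is, while the name of $g = \l{\tau(u_k)}$ differs from that of $\l{w_k}$ by Proposition~\ref{critical_properties}(3), forcing $\f{\s{\zeta}}\neq g^{-1}$.

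To verify that the new sequence is length reducing at step $k{+}1$, I note that the letter of $\zeta$ immediately following $u_{k+1}$ is $\l{v} = t^{-1}$. A direct application of the $\tau$ formula to $u_{k+1}$ --- keeping track of the fact that the labelling of the pair $(z,t)$ at the negative end swaps when the length of the negative suffix changes parity from $n$ to $n-1$ --- gives $\l{\tau(u_{k+1})} = t$, which cancels with the following $t^{-1}$ as required. The critical right overlap condition then follows: the name of $g$ lies in both $\{a_{i_k},a_{j_k}\}$ and the generator set of $\zeta$; by Proposition~\ref{critical_properties}(3) the name of $\l{w_k}$ differs from that of $g$ and so lies outside the generator set of $\zeta$; and $\f{w_{k+1}} = \f{v}$ carries the name of the other generator of $\zeta$, which is not among $\{a_{i_k},a_{j_k}\}$ under the tacit overlap-one hypothesis in which the lemma is to be invoked.

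The main obstacle is Step~2, namely the criticality of $u_{k+1}$: the crucial content is the combinatorial assertion that $p(u_{k+1}) = p+1$ and $n(u_{k+1}) = n-1$, and this uses in an essential way the minimality of $v$. A minor secondary nuisance is the parity-dependent relabelling of $(z,t)$ when the length of the negative suffix drops by one, which must be tracked carefully through the $\tau$ formula but is routine once noticed.
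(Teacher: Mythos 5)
Your construction is sound and in fact lands on the same critical factor as the paper, but you reach it from the opposite end. The paper takes the maximal 2-geodesic prefix $\zeta'$ of $\zeta$ and applies the suffix half of Lemma~\ref{geodlem} to $\zeta'h$ (with $h$ the next letter of $\zeta$): this hands over a critical subword $\theta$ with $\l{\tau(\theta)}=h^{-1}$ ready-made, so criticality and the free cancellation come for free, and the only remaining point is that $\theta$ must reach back to $g$ (otherwise $\theta h$ would be a non-2-geodesic subword of the 2-geodesic $\s{\zeta}$), whence $\theta=gw_{k+1}$. You instead apply the prefix half of Lemma~\ref{geodlem} to $g\s{\zeta}$, take the minimal critical prefix $v$ of $\s{\zeta}$, and set $u_{k+1}=g\p{v}$; since $g\p{v}$ is precisely the maximal 2-geodesic prefix of $\zeta$, this is the same word $\theta$, but you must now re-derive its criticality ($p$ up by one, $n$ down by one, with minimality of $v$ excluding a stray negative block inside $\xi$) and the cancellation $\l{\tau(u_{k+1})}=\l{v}^{-1}$ by direct computation. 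That computation, including the parity swap of $(z,t)$, is correct, so the argument works; it simply trades the paper's one-line appeal to Lemma~\ref{geodlem} for explicit bookkeeping. Your observation that the lemma is only invoked under a tacit overlap-one hypothesis is accurate, and the paper is no more explicit about it.

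Two loose ends. First, your justification that $\zeta$ is freely reduced does not work: knowing that $\l{w_k}$ and $g$ have different names, and that $\l{w_k}\f{\s{\zeta}}$ does not cancel, says nothing about whether $g=\f{\s{\zeta}}^{-1}$. Free reduction of $\zeta$ is really an unstated hypothesis (which holds in every application, and which the paper's own proof also assumes silently): if $\f{\s{\zeta}}=g^{-1}$ the claimed length-$(k+1)$ sequence does not exist, since no prefix of $\zeta$ beginning with $g$ is then critical. Second, your displayed case covers only $p(\s{\zeta})\ge 1$ with $v$ unsigned; when $p(\s{\zeta})=0$ (so $\s{\zeta}$ is a negative word with $n(\s{\zeta})=m$ and $v$ is a signed critical prefix), the forms of $v$, $u_{k+1}$ and $\tau(u_{k+1})$ are different and need their own, equally routine, verification --- this is not subsumed by the remark that $g\in X^{-1}$ is analogous.
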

\begin{proof}
$\zeta$ is not 2-geodesic but some non-empty prefix of it is. Applying
Lemma~\ref{geodlem} to a maximal such prefix, we can deduce
that $\zeta$ contains a critical
subword $\theta$, such that replacement within $\zeta$ of $\theta$ by $\tau(\theta)$
gives a word with free cancellation between the last letter of $\tau(\theta)$
and the next letter of $\zeta$. Since $\s{\zeta}$ is geodesic, this 
substitution cannot happen with $\s{\zeta}$, and hence $\theta$ must
be a prefix of $\zeta$. So
$\theta=gw_{k+1}$, where $w_{k+1}$ is a prefix of $\s{\zeta}$. Now
$\alpha w_1\cdots w_kw_{k+1}\beta$ is a rightward critical factorisation
for $w\s{\zeta}$. The final application of $\tau$  (to $\theta$) in the
corresponding critical sequence
sparks a free reduction at the right hand end of $\theta$, and hence
this sequence is length reducing.
\end{proof}

We are now ready to prove our three propositions.

\begin{proofof}{Proposition~\ref{Wclosedngen}}

Since $w \in W$ and $wg \not\in W$, it follows from the definition of $W$ that
one of the following two possibilities occurs:

{\bf Case 1} $wg$ admits a rightward length reducing sequence enabling the
free cancellation of the final $g$.

{\bf Case 2} $wg$ admits a leftward lex reducing sequence but no rightward length
reducing sequence.

In each of the two cases we need to eliminate the possibilities that either
(a) the reduction of $wg$ admits a rightward length
reducing sequence, or
(b) the reduction of $wg$ admits a leftward
lex reducing sequence.
We use the notation for rightward and leftward reducing sequences that
was established above.

In Case 1,
we choose an optimal rightward length reducing sequence of $wg$,
with corresponding factorisation $\alpha w_1\cdots w_kg$;
recall that we call the word  resulting from this reduction $\rho_1(wg)$.
In Case 2,
we choose an optimal leftward lex reducing sequence of $wg$,
with corresponding factorisation $\alpha w_k \cdots w_1$;
recall we call the word resulting from this reduction $\rho_2(wg)$.
Note that we have defined $\rho(wg)$ to be $\rho_1(wg)$ in Case 1, and $\rho_2(wg)$
in Case 2.

We shall see that in Case (1), if $\rho_1(wg)$ admits either a rightward or
leftward reducing sequence, then the same is true of $w$,
while in Case (2), if $\rho_2(wg)$ admits a rightward reducing
sequence, then so does $wg$ (and so in fact we are in case (1)),
and if $\rho_2(wg)$ admits a leftward reducing sequence then either the
same is true of $w$ or $wg$ admits a leftward reducing sequence
whose left hand end is further left than in the previously chosen sequence for
$wg$, contradicting its optimality.
The details of thise argument now follow.

{\bf Case 1(a):}

Suppose that we are in Case 1 and that
$\rho(wg)=\rho_1(wg)$ admits a rightward length reducing sequence
with associated factorisation
$\beta \bar{w}_1\cdots \bar{w}_{\bar{k}}h\gamma$, where $h$ is the tail,
which cancels after application of the $\tau$-moves to $\rho(wg)$.

Since $w$ is in $W$ and hence cannot admit a rightward length reducing sequence,
the subword 
$\bar{w}_1\cdots \bar{w}_{\bar{k}}h$ of $\rho(wg)$ cannot be a subword of $w$.
Hence it has some intersection with the suffix $u'_1\cdots u'_k$ of $\rho(wg)$.
However, Lemma~\ref{lem1} (7) tells us that it is contained within
$\alpha u'_1$.
So the 2-generator subword $\bar{w}_{\bar{k}}h$ has some intersection with
$u'_1$, but by Lemma~\ref{lem1}(6) any other factors of this sequence are to the left of $u'_1$ in
$\rho(wg)$.
If $\bar{k}>1$, $\bar{w}_{\bar{k}}$ starts no later than
$\f{u_1'}$,
but if $\bar{k}=1$, $\bar{w}_1$ may start within $u_1'$.

We eliminate first the case $\bar{k}=1$.
We define $\eta$ to be the 2-generator subword of $\rho(wg)$ that starts at the 
beginning of $\bar{w}_1$ and ends at the right hand end of $u'_1$.
Then we define $\zeta$ be the 2-generator subword of $w$ that starts
at the beginning of $\bar{w}_1$ if that is within $\alpha$, or otherwise at
the beginning of $w_1$, and ends at the right hand end of $w_1$.
Since the application of a $\tau$-move to $\bar{w}_1$ sparks a free reduction
with the following letter in $u'_1$, $\eta$ cannot be 2-geodesic and, since
$\eta$ is a subword of a word obtained by applying a $\tau$-move to $\zeta$,
neither is $\zeta$. But $\zeta$ is a subword of $w$, so we contradict
$w \in W$.
 
So now we assume that $\bar{k}>1$. Then 
$\beta \bar{w}_1\cdots \bar{w}_{\bar{k}-1}$ is a 
rightward critical factorisation of length $\bar{k}-1$ of a word that
is also a prefix of $w$.
We shall now show how to extend this to yield a rightward length reducing
sequence of length $\bar{k}$
for $w$,
thereby contradicting $w \in W$.

Let $\bar{v}$ be the word that is derived from $\rho(wg)$ by applying the
$\bar{k}-1$ $\tau$-moves of this rightward critical sequence of length $\bar{k}-1$.
Then (using the notation we have already established for a
rightward critical factorisation of $\bar{w}$)
\[\bar{v} = \beta \bar{u}'_1 \bar{u}'_2 \cdots
\bar{u}'_{\bar{k}-1} \bar{u}_{\bar{k}} h \gamma.\]
Let $v$ be the word that is derived from $w$ by applying the same
sequence of $\bar{k}-1$ moves.
Then  $v$ and $\bar{v}$ share a prefix that includes
\[ \beta \bar{u}'_1 \bar{u}'_2 \cdots \tau(\bar{u}_{\bar{k}-1}).\]

Figure \ref{fig3} illustrates this situation. In the figure we can
trace out the paths of $wg, \rho(wg),\bar{v},v$. All four paths
pass through the circled vertex;
$wg$ and $\rho(wg)$ come into the circled vertex along the upper route 
along $\bar{w}_{\bar{k}-1}$ and part of $\bar{w}_{\bar{k}}$, while
$v$ and $\bar{v}$ follow the lower route
along $\tau(\bar{u}_{\bar{k}-1})$ and part of $\bar{w}_{\bar{k}}$.
The paths of $wg$ and $v$ leave the circled vertex along $w_1$,
while those of $\rho(wg)$ and $\bar{v}$ leave along $u'_1$.
\setlength{\unitlength}{0.74pt}
\begin{figure}
\begin{center}
{\large
\begin{picture}(750,130)(-150,-10)

\qbezier(-100,70)(-70,75)(-56,66) \put(-170,60){$wg, \rho(wg)$}
\qbezier(-100,30)(-10,120)(50,60) \put(-140,20){$v,\bar{v}$}
\put(-25,95){$\bar{w}_{\bar{k}-1}$}
\put(-24,68){$\bar{u}_{\bar{k}-1}$}
\qbezier(-79,49)(-30,20)(0,30) \qbezier(0,30)(25,40)(35,49)
\put(-50,40){$\tau(\bar{u}_{\bar{k}-1})$}
\put(-45,10){$\bar{u}'_{\bar{k}-1}$}

\put(70,70){$\bar{w}_{\bar{k}}$}
\put(65,74){\vector(-3,-2){14}}
\put(94,68){\vector(3,-2){38}}
\qbezier(35,50)(75,80)(100,50) 
\qbezier(35,49)(75,79)(100,49) 
\qbezier(36,49)(75,78)(100,48) 
\put(76,40){$\bar{u}_{\bar{k}}$}
 
\put(50,5){$\tau(\bar{u}_{\bar{k}})$}
\qbezier(35,50)(50,0)(138,34) 

\put(100,50){\circle*{10}}
\qbezier(100,50)(175,120)(249,80) 
\qbezier(100,49)(175,119)(248,80) 
\qbezier(100,48)(175,118)(247,80) 
\put(180,100){$w_1$} \put(180,80){$u_1$}

\qbezier(100,50)(130,25)(180,40) \qbezier(180,40)(200,50)(230,70)
\qbezier(100,49)(130,24)(180,39) \qbezier(180,39)(200,49)(230,69)
\qbezier(100,48)(130,23)(180,38) \qbezier(180,38)(200,48)(230.5,69)
\put(140,45){$\tau(u_1)$}
\put(137,15){$u'_1$}
\put(125,20){\vector(-1,1){20}}
\put(165,20){\vector(3,2){55}}
\qbezier(230,70)(295,110)(370,80)  \put(300,100){$w_2$}
\put(300,80){$u_2$}
\qbezier(230,70)(260,25)(300,40) 
\qbezier(300,40)(320,50)(350,60)
\put(260,45){$\tau(u_2)$}
\put(262,18){$u'_2$}
\end{picture}
}
\caption{\label{fig3} Collision between two rightward sequences.}
\end{center}
\end{figure}
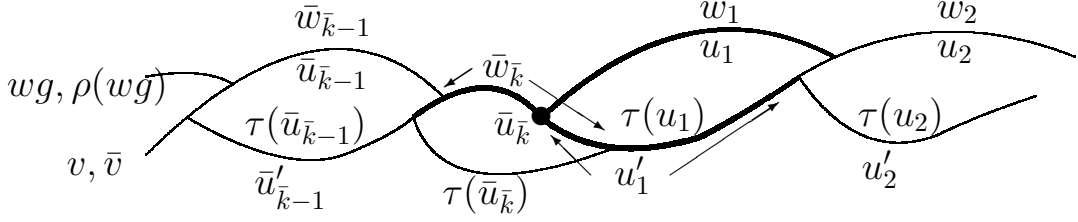

Now let $\eta$ be the 2-generator subword of $\bar{v}$ that starts at the 
beginning of $\bar{u}_{\bar{k}}$ and ends at the right hand end of $u'_1$.
Let $\zeta$ be the 2-generator subword of $v$ that starts at the beginning of
$\bar{u}_{\bar{k}}$ and ends at the right hand end of $u_1$. 
The subwords $\zeta$ and $\eta$ are marked in bold in the figure.

The first letter of both $\eta$ and $\zeta$ is the last letter of $\tau(\bar{u}_{\bar{k}-1})$.
Since the final move in the rightward length reducing sequence for $\rho(wg)$
sparks a free reduction, $\eta$ is not 2-geodesic, and since $\eta$ is
a subword of a word derived from $\zeta$ by applying a $\tau$-move to a suffix,
neither is $\zeta$.  The subword $\s{\zeta}$ of $w$ must be 2-geodesic, for 
otherwise
Theorem \ref{artin2} tells us that $\s{\zeta}$ is not in $W$, and hence neither
is $w$, and we have a contradiction. 
So now we can apply Lemma~\ref{extension_lemma} to deduce the existence of
a rightward length reducing sequence of length $\bar{k}$ for 
the prefix $\beta \bar{w}_1\bar{w}_2\cdots \bar{w}_{\bar{k}-1}\s{\zeta}$ 
of $w$,
contradicting the fact that $w \in W$.

{\bf Case 1(b):}

Next suppose that we are in Case 1 and that $\rho(wg)=\rho_1(wg)$ admits a
leftward lex reducing sequence with associated
factorisation $\beta \bar{w}_{\bar{k}}\cdots \bar{w}_1\gamma$.
Applying Lemma~\ref{lem1}(7) we see that
$\bar{w}_1$ is contained within $\alpha u'_1$ in $\rho(wg)$. Since
$w$ is in $W$ and so cannot admit a leftward lex reducing
sequence, $\bar{w}_1$ cannot be contained within $\alpha$, but must end within
$u'_1$.

Now we assume that $w_1 =u_1= {}_p(x,y) \xi (z^{-1},t^{-1})_n$ with
$p(w_1)+n(w_1)=m$ for the appropriate $m$, and $\tau(u_1) =
{}_n(x^{-1},y^{-1}) \delta(\xi) (t,z)_p$.
(We omit the argument that excludes the other choice for $w_1$ of
Lemma~\ref{lem1} (1), which is very similar.)
By Lemma~\ref{lem1} (1), we have $p>0$.

Now since the chosen factorisation of $wg$ is optimal, no proper
suffix of $u_1$ is critical, and so
$p( \s{u_1}) < p$ and hence
$p(\p{\tau(u_1)}) < p$; that is, $p(u'_1) < p$.
Hence if $\pi$ is the positive alternating subword of length $p$
at the beginning or end of $\bar{w}_1$, $\pi$ cannot be a subword
of $u'_1$ and so must intersect $\alpha$. 

If $n>0$, then $u'_1$ begins with a negative alternating subword,
and so $\pi$ is contained within $\alpha$. In this case we define
$\bar{w}'_1$ to be the subword of $w$ that starts at the beginning of $\pi$
and ends at the end of $w_1$.
If $n=0$, then by Lemma~\ref{lem2} (1) we can assume that $\pi$ lies
at the right hand end of $\bar{w}_1$, and so it must intersect $u'_1$,
and hence the prefix $\delta(\xi)$ of $u'_1$ (since it also
intersects $\alpha$).
In this case we define
$\bar{w}'_1$ to be the subword of $w$ that starts at the beginning of $\pi$
and ends at the end of the prefix ${}_p(x,y)$ of $w_1$.
Either way, $\bar{w}'_1$ is a critical subword of $w$,
and $\beta\bar{w}_{\bar{k}}\cdots \bar{w}_2\bar{w}'_1$
is a factorisation of a prefix of $w$ (either $\alpha w_1$ or a prefix of that)
corresponding to a leftward reducing sequence for that prefix.
This contradicts the fact that $w \in W$.

This completes the analysis of Case 1, so now suppose that we are in Case 2.

{\bf Case 2(a):}

The possibility that we are in Case 2, and that $\rho_2(wg)$ admits a
rightward length reducing sequence
is excluded by the following result, which we state as a separate lemma since we shall also use it in the proof of Proposition~\ref{ac1}:
\begin{lemma}\label{lem3}
Suppose that $w \in W$, and that  $wg$ admits an
optimal leftward lex reducing sequence with associated factorisation
$wg =\alpha w_k\cdots w_1$, leading to
\[\rho_2(wg) = \alpha \tau(u_k)u'_{k-1}\cdots u'_3 u'_2u'_1.\] 
Then $\rho_2(wg)$ admits a rightward length reducing sequence if and only if
$wg$ admits a rightward length reducing sequence.
\end{lemma}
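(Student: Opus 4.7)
This is an equivalence, so I would prove the two implications separately. The controlling observation for both is Lemma~\ref{lem2}(7): any further rightward length reducing sequence for $\rho_2(wg) = \alpha\tau(u_k)u'_{k-1}\cdots u'_1$ has all its factors and its tail confined to the prefix $\alpha\tau(u_k)$. A second useful fact, which I would invoke repeatedly, is that since $w\in W$, neither $w$ nor any of its subwords admits a rightward length reducing or leftward lex reducing sequence.

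For the ``if'' direction, suppose $wg$ admits a rightward length reducing sequence. Since $w\in W$ admits none, and since the tail of any rightward length reducing sequence is a letter that gets freely cancelled, the tail must be the final letter $g$ of $wg$, with all the factors $\bar w_1,\ldots,\bar w_{\bar k}$ lying inside $w$. The rightmost factor $\bar w_{\bar k}$ is then a critical 2-generator subword whose alphabet contains the generator of name $g$, and so coincides with the alphabet of the rightmost leftward factor $w_1 = u_1$, whose precise form is pinned down by Lemma~\ref{lem2}(1). I would use this shape information to locate $\bar w_{\bar k}$ relative to $u_1$ (they share a 2-generator alphabet and both abut $g$) and, via Lemma~\ref{extension_lemma}, produce a corresponding rightward length reducing sequence on $\rho_2(wg)$ whose factors and tail sit inside the prefix $\alpha\tau(u_k)$, as Lemma~\ref{lem2}(7) requires.

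For the ``only if'' direction, suppose $\rho_2(wg)$ admits a rightward length reducing sequence. By Lemma~\ref{lem2}(7) this sequence lives in $\alpha\tau(u_k)$. If it lay entirely in $\alpha$, then since $\alpha$ is a prefix of $w$ it would give a rightward length reducing sequence for $w$, contradicting $w\in W$; so the rightmost factor $\bar w_{\bar k}$ must intersect $\tau(u_k)$ non-trivially. I would then follow the diagrammatic strategy of Case~1(a) (as in Figure~\ref{fig3}), tracing four paths through $wg$, $\rho_2(wg)$, the intermediate word $\alpha w_k\tau(u_{k-1})u'_{k-2}\cdots u'_1$ obtained by undoing the final leftward $\tau$-move on $u_k$, and the partial reduction obtained by applying the hypothetical rightward sequence to that intermediate. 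Comparing these paths inside the region $\alpha\tau(u_k)$ should expose a non-2-geodesic 2-generator subword of $wg$, and Lemma~\ref{extension_lemma} will then promote this into a genuine rightward length reducing sequence for $wg$ itself.

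The main obstacle is the only-if direction in the case where $\bar w_{\bar k}$ straddles the boundary between $\alpha$ and $\tau(u_k)$, particularly when its 2-generator alphabet differs from that of $u_k$: there is no direct pullback of the sequence to $wg$. To handle this I would exploit the detailed first-letter/last-letter data provided by Lemma~\ref{lem2}(1)--(5), which constrains the names and signs of the letters at the right-hand end of $\tau(u_k)$ and the left-hand end of $u'_{k-1}$, and then combine this with Lemma~\ref{extension_lemma} to reconstruct a correctly positioned rightward length reducing sequence for $wg$. Care is needed throughout to ensure that the tail of the reconstructed sequence really does trigger free cancellation and that the optimality of the leftward factorization chosen at the outset is not violated.
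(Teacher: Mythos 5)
Your plan assembles the right ingredients (Lemma~\ref{lem2}(7) to confine any rightward sequence for $\rho_2(wg)$ to $\alpha\tau(u_k)$, Lemma~\ref{extension_lemma}, the four-path comparison of Figure~\ref{fig3}, and the first-letter data from Lemma~\ref{geodlem}), but it is missing the central organising device of the actual argument: an induction on $k$, the length of the leftward lex reducing sequence. What you describe — locating the rightmost rightward factor relative to $u_1$ (resp.\ $\tau(u_k)$), undoing ``the final leftward $\tau$-move'', and comparing paths — is essentially the base case $k=1$. For $k>1$ it does not connect the two words: the critical word $u_k$ equals $w_k\f{\tau(u_{k-1})}$, so it contains a letter that is not present in $wg$ at all but was produced by the earlier leftward moves, and the entire region between $\tau(u_k)$ and the right-hand end has been rewritten by $k$ successive $\tau$-moves. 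A single ``undo one move and compare'' step, or an appeal to ``first-letter/last-letter data'', does not bridge this; this is exactly the obstacle you flag in your last paragraph without resolving it.

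The paper resolves it by peeling off the rightmost leftward factor. Set $w':=\alpha w_k\cdots w_2$ (a prefix of $w$, hence in $W$) and $g':=\f{\tau(w_1)}$; then $w'g'$ admits a leftward lex reducing sequence of length $k-1$, and $\rho_2(w'g')$ is a prefix of $\rho_2(wg)$. For one implication, a rightward sequence for $\rho_2(wg)$ stops inside $\tau(u_k)$, hence gives one for $\rho_2(w'g')$; induction gives one for $w'g'$, which transforms $w'$ to a word ending in $g'^{-1}$, and Lemma~\ref{extension_lemma} applied to the non-2-geodesic suffix $g'^{-1}w_1$ extends it to $wg$. For the other, one first shows the rightward sequence for $wg$ has length $\bar k>1$ and that its $(\bar k-1)$-th move produces precisely the letter $h$ with $h^{-1}=\f{\tau(w_1)}=g'$ (via Lemma~\ref{geodlem} and Corollary~\ref{critical_subword}); this yields a rightward length reducing sequence for $w'g'$, and induction transfers it to $\rho_2(w'g')$, a prefix of $\rho_2(wg)$. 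Without this recursion your transfer arguments do not go through beyond $k=1$, so as it stands the proposal has a genuine gap rather than an alternative route.
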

We apply the lemma (whose proof we defer until the end of the proof of this
proposition) to deduce that in this case
$wg$ must also admit a rightward length reducing
sequence, a possibility that we have excluded from Case 2. 

{\bf Case 2(b):}

So now suppose that we are in Case 2 and that
$\rho(wg) = \rho_2(wg)$ admits a leftward lex reducing sequence
with associated factorisation $\beta \bar{w}_{\bar{k}}\cdots
\bar{w}_1\gamma$. Lemma~\ref{lem2} (7) tells us that
the subword $\bar{w}_1$ is a subword of $\alpha \tau(u_k)$ within
$\rho(wg)$.
Since $w \in W$, $\bar{w}_1$ cannot be a subword of $\alpha$
and so $\bar{w}_1$ must end within $\tau(u_k)$.

We suppose that $u_k = {}_p(x,y) \xi (z^{-1},t^{-1})_n$ with
$p(u_k)+n(u_k)=m$ for the appropriate $m$, and $\tau(u_k) =
{}_n(x^{-1},y^{-1}) \delta(\xi) (t,z)_p$ (We omit the other case dealing with the
other possibility for $u_k$  of Lemma~\ref{lem2} (1), which is similar.)
By Lemma~\ref{lem2} (1), we have $n > 0$.
Let $\nu$ be the negative alternating sequence of length $n$ at the beginning or
end of $\bar{w}_1$.  and let $\nu'$ be the subword ${}_n(x^{-1},y^{-1})$ of
$\tau(u_k)$.

We claim that $\nu'$ must be the unique negative alternating
subword of length $n$ in $\tau(u_k)$.
If $p=0$, then this is true by definition of critical words for negative words.
If $p>0$ and there there was another such subword, then it
would necessarily lie entirely within $\delta(\xi)$, in which case $\xi$ would
also contain such a subword, and then a prefix of the subword ${}_p(x,y) \xi$
of $w$ would be upper critical. The application of $\tau$ to this prefix
would give $w$ a leftward lex reducing sequence of length 1,
contradicting $w \in W$. Hence in this case too the claim is proved.

Suppose first that $p > 0$.

If $\nu \ne \nu'$ then, by the preceding paragraph,
$\nu$ lies to the left of $\nu'$ and hence to the left of
$\tau(u_k)$, at the beginning of $\bar{w}_1$, within $\alpha$.
Now
we define $\bar{w}'_1$  to be the subword of $\alpha u_k$ that runs from the
beginning of $\nu$ to the end of the prefix ${}_p(x,y)$ of $u_k$,
and find a leftward lex reducing sequence for $w$
with associated factorisation
$\beta \bar{w}_{\bar{k}}\cdots \bar{w}_2\bar{w}_1'\gamma'$,
contradicting $w \in W$.

So we suppose that $\nu=\nu'$. 
If $\nu$ is at the beginning of $\bar{w}_1$,
then $\tau(\bar{w}_1)$ has the same prefix ${}_p(x,y)$ 
as $u_k$ and then $\tau(u_k) \lexle u_k$ implies
$\bar{w}_1 \lexle \tau(\bar{w}_1)$, so we must have $\bar{k}>1$. 
But then then also $\f{\tau(\bar{w}_1}= \f{u_k} = \f{w_k}$ and so
$\beta \bar{w}_{\bar{k}}\cdots \bar{w}_2\f{\tau(\bar{w}_1)}
=\beta \bar{w}_{\bar{k}}\cdots \bar{w}_2\f{w_k}$
is a prefix of $\alpha w_k$ and hence of $w$.
Then, where $\bar{w'}_2 = \bar{w}_2\f{w_k}$, the factorisation
$\beta \bar{w}_{\bar{k}}\cdots \bar{w'}_2$ of that prefix
is associated with a leftward lex reducing sequence that also reduces $w$,
contradicting $w \in W$.

On the other hand if $\nu$ is at the right hand end of $\bar{w}_1$, 
then $\bar{w}_1 = w_1'\nu$.
Then there is a leftward lex reducing sequence of $wg$ with
factorisation $\beta \bar{w}_{\bar{k}}\cdots
\bar{w}_2 \bar{w}_1'w_{k-1}\cdots w_1$ in which $\bar{w}'_1 = w'_1w_k$.
This extends further left that the chosen factorisation, contrary to
assumption.

If $p=0$ then by Lemma~\ref{lem2} (1)
applied to the shortest prefix of $\rho(wg)$ that is not in $W$, $\nu$ must be
at the right hand end of $\bar{w}_1$,
and again $wg$ has a leftward reducing sequence that extends further left than
the chosen one, giving a contradiction as before.

\end{proofof}

To complete the proof of Proposition~\ref{Wclosedngen}.
we need the proof of Lemma~\ref{lem3}.

\begin{proofof}{Lemma~\ref{lem3}}
We prove first  
(a) that if $\rho_2(wg)$ admits a rightward
length reducing sequence then $wg$ admits one too,
and then (b) that if $wg$ admits a rightward length reducing sequence,
then so does $\rho_2(wg)$.

{\bf Proof of (a):}

Suppose that $\rho_2(wg)$ has a rightward length reducing sequence with
associated factorisation $\beta \bar{w}_1\cdots \bar{w}_{\bar{k}}h\gamma$,
where the generator $h$ cancels after application of the $\tau$-moves to
$\bar{w}:=\bar{w}_1\cdots \bar{w}_{\bar{k}}$. Then by Lemma~\ref{lem2} (7)
$\bar{w}_{\bar{k}}h$  is a subword of $\alpha \tau(u_k)$.
If it were also a subword of $\alpha$, we would have a rightward
length reducing sequence for $w$, contradicting the fact that $w \in W$.
Hence $\bar{w}_{\bar{k}}h$ must end within $\tau(u_k)$.
But by Lemma~\ref{lem2}(6) any other factors of this sequence must be within
$\alpha$.

The proof is now by induction on $k$. 

{\bf Base case.}
Suppose that $k=1$.

In the case where $\bar{k}=1$, we 
define $\eta$ to be the maximal 2-generator subword of $\rho(wg)$ that
contains $\bar{w}_1h$.
Since the application of a $\tau$-move to $\bar{w}_1$ 
enables a free reduction, $\eta$ cannot be 2-geodesic.
Hence neither is $\zeta$,
the 2-generator subword of $w$ which is mapped to $\eta$ by applying a
$\tau$-move to a subword.
So $\zeta$ admits a right length reducing sequence of length 1,
and hence so does $wg$.

So now we shall assume that $\bar{k}$ is not 1.
Let $\bar{v}$ be the word obtained from $\rho_2(wg)$ by applying the
first $\bar{k}-1$ terms of its rightward length reducing sequence,
and let $v$ be the word obtained by applying the same sequence of
 moves to $wg$.

Figure \ref{fig4} illustrates this situation. 
The circled vertex
marks the end of the common prefix of $v,\bar{v}$.
The subwords $\zeta$ and $\eta$ (defined below) are marked in bold in the figure.
\setlength{\unitlength}{0.75pt}
\begin{figure}
\begin{center}
{\large
\begin{picture}(760,150)(-150,-30)

\qbezier(-100,70)(-70,75)(-56,66) \put(-170,60){$wg, \rho_2(wg)$}
\qbezier(-100,30)(-10,120)(50,60) \put(-140,20){$v,\bar{v}$}
\put(-25,95){$\bar{w}_{\bar{k}-1}$}
\put(-23,68){$\bar{u}_{\bar{k}-1}$}
\qbezier(-79,49)(-30,20)(0,30) \qbezier(0,30)(25,40)(35,50)
\put(-40,40){$\tau(\bar{u}_{\bar{k}-1})$}
\put(-45,10){$\bar{u}'_{\bar{k}-1}$}

\put(70,70){$\bar{w}_{\bar{k}}$}
\put(65,74){\vector(-3,-2){14}}
\put(94,68){\vector(3,-2){40}}
\qbezier(35,50)(75,80)(100,50) 
\qbezier(35,49)(75,79)(100,49) 
\qbezier(36,49)(75,78)(100,48) 
\put(76,40){$\bar{u}_{\bar{k}}$}
 
\put(50,6){$\tau(\bar{u}_{\bar{k}})$}
\qbezier(35,50)(50,0)(136,34) 

\put(100,50){\circle*{10}}
\qbezier(100,50)(175,120)(250,80) 
\qbezier(100,49)(175,119)(250,79) 
\qbezier(100,48)(175,118)(250,78) 
\put(160,100){$w_1$} \put(160,75){$u_1$}

\qbezier(100,50)(130,25)(180,40) \qbezier(180,40)(200,50)(250,80)
\qbezier(100,49)(130,24)(180,39) \qbezier(180,39)(200,49)(250,79)
\qbezier(100,48)(130,23)(180,38) \qbezier(180,38)(200,48)(250,78)
\put(140,45){$\tau(u_1)$}
\put(140,15){$h$}
\put(65,120){\vector(-1,-2){30}}
\put(100,140){\vector(3,-1){130}}
\put(75,130){$\zeta$}
\put(75,-35){$\eta$}
\put(65,-30){\vector(-1,2){30}}
\put(100,-30){\vector(3,2){130}}
\end{picture}
}
\caption{\label{fig4} Part (a). Induction, case $k=1$.}
\end{center}
\end{figure}
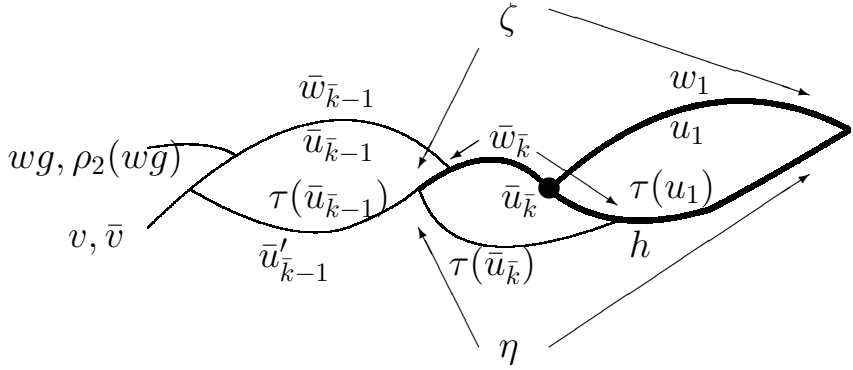

Let $\eta$ be the 2-generator suffix of $\bar{v}$ that starts 
at the left hand end of $\bar{u}_{\bar{k}}h$.
The final $\tau$-move of the rightward sequence, which is applied to the prefix
$\bar{u}_{\bar{k}}$
of $\eta$, enables a free reduction, so $\eta$ is not 2-geodesic.
So the word $\zeta$ obtained by replacing 
the subword $\tau(u_1)$ in $\eta$ by $u_1$ is also not 2-geodesic.
Now we can apply Lemma~\ref{extension_lemma} to get a rightward
length reducing sequence for 
$wg= \beta \bar{w}_1\bar{w}_2\cdots \bar{w}_{\bar{k}-1}\s{\zeta}$.

{\bf Inductive step.} Suppose that $k>1$.
Fig~\ref{fig5} illustrates this part of the proof.
\setlength{\unitlength}{0.48pt}
\begin{figure}
\begin{center}
{
\small
\begin{picture}(800,300)(-840,-10)
\qbezier(-100,50)(-175,120)(-250,80) 
\put(-180,100){$w_1$}
\put(-180,76){$u_1$}
\qbezier(-100,50)(-130,25)(-180,40) \qbezier(-180,40)(-200,50)(-250,80)
\put(-235,75){$g'$}
\put(-185,46){$\tau(u_1)$}
\put(-180,15){$u'_1$}
\qbezier(-250,80)(-295,110)(-370,80) \put(-300,100){$w_2$}
\qbezier(-250,79)(-295,109)(-370,79) \qbezier(-250,78)(-295,108)(-370,78)
\put(-300,80){$u_2$}
\qbezier(-231,68)(-260,25)(-300,40) \qbezier(-300,40)(-320,50)(-370,80)
\put(-305,48){$\tau(u_2)$}
\put(-300,15){$u'_2$}
\qbezier(-370,80)(-415,110)(-490,80) \put(-420,100){$w_3$}
\qbezier(-370,79)(-415,109)(-490,79) \qbezier(-370,78)(-415,108)(-490,78)
\put(-420,80){$u_3$}
\qbezier(-351,68)(-380,25)(-420,40) \qbezier(-420,40)(-440,50)(-490,80) 
\put(-425,48){$\tau(u_3)$}
\put(-420,15){$u'_3$}
\qbezier(-490,80)(-535,110)(-610,80) \put(-540,100){$w_4$}
\qbezier(-490,79)(-535,109)(-610,79) \qbezier(-490,78)(-535,108)(-610,78)
\put(-540,80){$u_4$}
\qbezier(-471,68)(-500,25)(-540,40) \qbezier(-540,40)(-560,50)(-610,80) 
\put(-545,48){$\tau(u_4)$}
\qbezier(-610,80)(-655,110)(-730,80) \put(-660,100){$w_k$}
\qbezier(-610,79)(-655,109)(-730,79) \qbezier(-610,78)(-655,108)(-730,78)
\put(-660,80){$u_k$}
\qbezier(-591,68)(-620,25)(-660,40) \qbezier(-660,40)(-680,50)(-728,80) 
\put(-665,48){$\tau(u_k)$}
\put(-780,70){$\alpha$}
\qbezier(-730,80)(-780,45)(-830,80) \qbezier(-730,79)(-780,44)(-830,79)
\qbezier(-730,78)(-780,43)(-830,78)
\put(-500,200){$w'$}
\qbezier(-250,80)(-500,300)(-830,80)
\end{picture}
}
\caption{\label{fig5} Part (a), Inductive step.}
\end{center}
\end{figure}
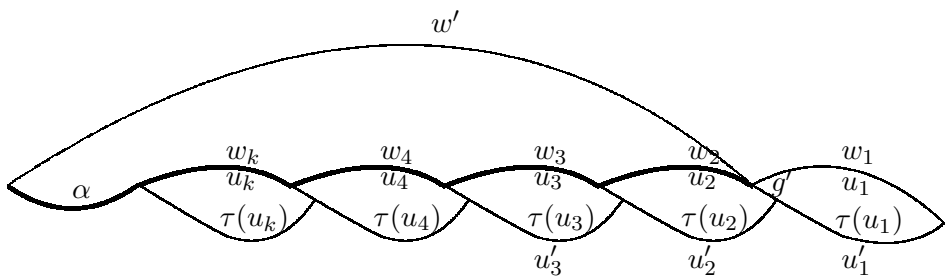

Let $w'$ be the prefix
$\alpha w_k \cdots w_2$ of $w$; as a prefix of $w$ it must be in $W$.
Let $g' :=\f{\tau(w_1)}=\f{\tau(u_1)}$.
The word  $ \alpha w_k \cdots w_2\tau(w_1)$ is the result of
the first of the $k$ steps of the leftward reduction of $wg$, and so
admits a leftward lex reducing sequence of length $k-1$;
the same leftward lex reducing sequence of length $k-1$
reduces $w'g'$ (as a prefix of the above)
to a prefix $\rho_2(w'g')$ of $\rho_2(wg)$.

Now the rightward length reducing sequence
that we have for $\rho_2(wg)$ stops within the $\tau(u_k)$ subword, and so
certainly to the left of the final suffix $\s{ \tau(w_1)}$ of $\rho_2(wg)$;
hence $\rho_2(w'g')$ admits a rightward length reducing sequence. 

Now we can apply the induction hypothesis to $w'g'$ to deduce that
$w'g'$ admits a rightward length reducing sequence.
Since $w' \in W$, the last factor of the associated
factorisation is a suffix of $w'g'$.  The sequence transforms $w'g'$
to a word $w''g'$, where
$\l{w''}$ is the inverse of $g'$.
The same rightward critical sequence can be applied to $w=w'w_1$, which
it transforms to $w''w_1$. 
Finally, we consider the suffix $\zeta = {g'}^{-1}w_1$ of $w''w_1$.
Since ${g'}^{-1}\tau(w_1)$ is not freely reduced, it is not 2-geodesic,
and hence neither is $\zeta$. Now, just as in the $k=1$ case we can
apply Lemma~\ref{extension_lemma} to find a rightward length reducing
sequence for $wg$.

{\bf Proof of (b):}

Now suppose that $wg$ admits a rightward length reducing sequence.
Again we use induction on $k$.

{\bf Base case.}
When $k=1$ the proof is very similar to the $k=1$ case above.
We just interchange the roles of $u_1=w_1$ and $\tau(u_1)$. But we observe that
in this case the tail of the factorisation of $wg$ must be the final $g$,
since $w \in W$. 

{\bf Inductive step.}
Now suppose that $k>1$. In this case by Lemma~\ref{lem2} $w_1$,\ldots
$w_{k-1}$ are maximal 2-generator words and geodesic.

Suppose that $wg$ admits a rightward length reducing sequence of length
$\bar{k}$. This cannot apply to $w$, since $w \in W$.
It cannot have length 1. For if it did, it would apply to the
suffix $w_1$, which is geodesic.
So $\bar{k}>1$ and the $(\bar{k}-1)$-th $\tau$-move must change $\l{w_2}$ to a letter
$h$, say, where $hw_1$ is not 2-geodesic. 
But then, by Lemma~\ref{geodlem},
$w_1$ must have a critical prefix $v_1$ such that $\tau(v_1)$
begins with $h^{-1}$; the possibility that
$\f{w_1}=h^{-1}$ is excluded by the fact that $w \in W$.  
But in fact for any critical prefix $v_1$ of $w_1$,
$\f{\tau(v_1)}=\f{\tau(w_1)}$, and so we have $g' := \f{\tau(w_1)}=h^{-1}$.
So the first $\bar{k}-1$ moves of the rightward length reducing sequence of $wg$
also
induce a rightward length reducing sequence of $w':=\alpha w_k \cdots w_2 g'$.
But $w'$ admits a leftward lex reducing sequence of length
$k-1$, and so we can now apply
our inductive hypothesis to conclude that 
$\rho_2(w')$
admits a rightward length reducing sequence.
The result immediately follows since
\[\rho_2(w') =\rho_2(\alpha w_k \cdots w_2 g') =
\alpha \tau(u_k)u'_{k-1}\cdots u'_3 u'_2 \]
is a prefix of $\rho_2(wg)$. 
\end{proofof}

\begin{proofof}{Proposition~\ref{ac1}}
This is immediate except in the case when $wg$ is freely reduced but
$wg \not\in W$, in which case $\rho(wg)$ is defined as in the proof of
Proposition~\ref{Wclosedngen}, and we use the same notation as in that
proof.

First we suppose that $\rho(wg)=\rho_1(wg).$
In this case $wg$ admits a factorisation $\alpha w_1\ldots w_kg$,
corresponding to a rightward length
reducing sequence. The sequence of $\tau$ moves
transforms $w$ to  $w' := \alpha u'_1u'_2\cdots u'_{k-1} \tau(u_k)$ 
using our standard notation associated with a rightward factorisation of $wg$,
with $\tau(u_k)$ ending in $g^{-1}$. Then the
final $g^{-1}$ is cancelled to produce
$\rho(wg) = \alpha u'_1u'_2u'_3 \cdots u'_k$.
So $\rho(wg)g^{-1} = w'$. 
Hence to complete consideration of this case, we need to show that $\rho(w')=w$.

It follows from Lemma~\ref{lem1}~(4) and (5), that
reversing the $\tau$-moves in the rightward length reducing sequence
for $wg$ results in a leftward lex reducing sequence
$\S$ that transforms $w'$ back to $w$. 
Our next step is to show that $\S$ is optimal.

So let $\S'$ be the optimal lefward lex reducing
sequence for $w'$, that is the leftward lex reducing sequence
for $w'$ that extends furthest to the left in $w'$. Then $\S'$
involves at least $k$ $\tau$-moves, and the first $k-1$ of those must
match the first $k-1$ $\tau$-moves of $\S$, since 
those must correspond to $\tau(u_k),u'_{k-1},\ldots,u'_2$, defined as maximal
2-generator subwords of $w'$ (as in Lemma~\ref{lem2} (6)).
These first $k-1$ moves 
transform $w'$ back to $\alpha \tau(u_1) w_2\cdots w_k$.
Suppose that $u_1 = {}_p(x,y) \xi (z^{-1},t^{-1})_n$ with
$p+n=m$ for the appropriate $m$, and $\tau(u_1) =
{}_n(x^{-1},y^{-1}) \delta(\xi) (t,z)_p$ (the other case is similar)
where, by Lemma~\ref{lem1} (1), $p \ne 0$.
If the next $\tau$-move in $\mathcal{S'}$ transforms $\tau(u_1)$ back to
$u_1$, then we are back to $w$, and any further $\tau$-moves in
$\mathcal{S'}$ could have been applied to $w$, contradicting $w \in W$.
Now if $\S'$ extends further left than $\S$,
the next $\tau$-move in $\mathcal{S'}$ must apply to a word
$\beta \tau(u_1)$ having $\tau(u_1)$ as a proper suffix.  Since $\beta\tau(u_1)$
is critical, $\beta$ (like $\tau(u_1)$) must have a negative alternating
word of length $n$ as a prefix. But in that case $\beta\,{}_p(x,y)$ must also
be critical, and is a subword of $w$. So this $\tau$-move followed by any
remaining moves in the sequence $\S'$ is a leftward lex reducing sequence for
$w$, contradicting $w \in W$. 
Hence $\S$ is indeed the optimal leftward lex reducing sequence that reduces $w'$
to $w$, that is $\rho_2(w')=w$.

Now we can apply Lemma~\ref{lem3} to see that if $w'$ can also be reduced
using a rightward length reducing sequence, then $w=\rho_2(w')$ must also
admit such a sequence. But this would contradict $w \in W$. Hence $w'$
admits no such reduction, and so we must have $\rho(w')=\rho_2(w')=w$
as required.

Now we suppose that $\rho(wg)=\rho_2(wg)$.
In that case we have a factorisation
$wg= \alpha w_k\cdots w_1$ of $wg$ corresponding to a leftward lex reducing
sequence for of $wg$ to 
\[\rho(wg)=\alpha \tau(u_k)u'_{k-1} \cdots u'_2u'_1.\]
Reversing these $\tau$-moves results in a rightward length reducing sequence
$\S$ for $\rho(wg)g^{-1}$, and we need to verify that there
is no alternative  rightward length reducing sequence
$\S'$ for $\rho(wg)g^{-1}$ that starts further to the right than
$\S$. By  Lemma~\ref{lem2} (7), such a sequence would have to
start to the left of $u'_{k-1}$, and so the factorisation would have
the form
\[\alpha \beta u''_ku'_{k-1} \cdots u'_2u'_1\]
with $\beta u_k'' = \tau(u_k)$ and $\beta$ nonempty.
Let $u_k = {}_p(x,y) \xi (z^{-1},t^{-1})_n$ with
$p+n=m$ for the appropriate $m$, and $\tau(u_k) =
{}_n(x^{-1},y^{-1}) \delta(\xi) (t,z)_p$ (the other case being similar)
where, by Lemma~\ref{lem2} (1), $n \ne 0$.
If $p>0$, then $n(\delta(\xi)) = n(\xi) = n$, in which case
the subword ${}_p(x,y) \xi$ of $w$ contains an upper critical subword,
contradicting $w \in W$. 
The case $p=0$ is ruled out by the definition of critical words in this case,
which requires that $\tau(u_k)$ contains a unique negative alternating
subword of length $n$.  
\end{proofof}

\begin{proofof}{Proposition~\ref{ac2}}
To ease the notation, let $a=a_i$, $b=a_j$, where we may assume that
$a \lexle b$, and $m = m_{ij}$.
We consider the 2-generator Artin group
$\DA_m=\langle a,b \mid {}_m(a,b)= {}_m(b,a) \rangle$.
Our general strategy is to show that in every situtation, in the
course of the computation of $\rho(w\,{}_m(a,b))$ by appending each
letter of ${}_m(a,b)$ in turn to $w$, at most one such appended letter
will precipitate a leftward lex reduction or a rightward length reduction of
the resulting word.  All other appended letters result either in no reduction,
or in the cancellation of the appended letter by free reduction.
In general, a similar
leftward or rightward reduction (if any) is involved in the computation of
$\rho(w\,{}_m(b,a))$, and we then apply Theorem~\ref{artin2} to $\DA_m$ to
infer the result. 

The result is clear if $w$ is empty or if
$w$ is a power of a letter whose name is not $a$ or $b$, for in these cases we
have $\rho(w\,{}_m(a,b)) = \rho(w\,{}_m(b,a)) = w\,{}_m(a,b)$.

Suppose that the name of $\l{ w }$ is $c$, with $c \not\in \{a,b\}$.

If $w$ does not have the form $w' v$ with $v$ a 2-generator word involving
$a,c$ or $b,c$, then again
$\rho(w\,{}_m(a,b)) = \rho(w\,{}_m(b,a)) = w\,{}_m(a,b)$. So from
now on we assume that
$w = w'v$ where $v$ involves $a$ and $c$ (the other case is similar).
In this case, if $\rho(wa) = wa$ then
$\rho(w\,{}_m(a,b)) = \rho(w\,{}_m(b,a)) = w\,{}_m(a,b)$.
So we suppose that $\rho(wa) \neq wa$.

Now we have the usual two cases for $\rho(wa)$.
In either case, by Lemmas~\ref{lem1}~(5) and~\ref{lem2}~(5),
the name of the final letter of
$\rho(wa)$ is $c$, so \[\rho(w\,{}_m(a,b)) = \rho(wa)\, {}_{m-1}(b,a).\]
Note also that $\rho(w\,{}_{m-1}(b,a)) = w\,{}_{m-1}(b,a)$. 

If we are in Case 1 for $\rho(wa)$, and $wa$ has a rightward length reducing
sequence with factorisation  $wa=\alpha w_1\cdots w_ka$, then
$w\,{}_m(b,a)$ has a rightward length reducing sequence with factorisation
$\alpha w_1\cdots w_kw_{k+1}x$, with $w_{k+1} = {}_{m-1}(b,a)$
and $x$ the final letter of $_{m}(b,a)$,
resulting in $u_{k+1} = a^{-1}\,{}_{m-1}(b,a)$,
$\tau(u_{k+1}) = {}_{m-1}(b,a) x^{-1}$, so
\[\rho(w\,{}_m(b,a)) = u'_1\cdots u'_k\, {}_{m-1}(b,a)= \rho(wa)\, {}_{m-1}(b,a),\]
and hence $\rho(w\,{}_m(b,a))= \rho(w\,{}_m(a,b))$, as required.  

Similarly, in Case 2 for $\rho(wa)$, where $wa$ has a leftward lex reducing
sequence with factorisation  $wa=\alpha  w_k\cdots w_1$,
$w\,{}_m(b,a)$ has a leftward lex reducing sequence with factorisation
$\alpha w_k\cdots \p{w_1}\, {}_m(b,a)$, resulting in
\[\rho(w\,{}_m(b,a)) = \alpha u'_k\cdots u'_1\, {}_{m-1}(b,a) = \rho(wa)\,
{}_{m-1}(b,a) = \rho(w\,{}_m(a,b)).\]

Now we suppose that the name of $\l{w}$ is $a$ or $b$.
Without loss of generality, we can assume that it is $a$;
although the other case appears to
be inequivalent, since $a \lexle b$, essentially the same arguments
work in both cases.  So $\l{w} = a$ or $a^{-1}$.
We have $w = w' v$, where $v$ is a word involving only $a$ and (possibly) $b$,
and $w'$ is either empty or else the name of $\l{w'}$ is not $a$ or $b$.
Let $p=p(v), n=n(v)$; so $p+n \le m$.
When $\l{w} = a$ or $a^{-1}$, respectively, let $v = v' (b,a)_k$  or
$v = v' (b^{-1},a^{-1})_k$ with $k$ maximal.

{\bf Case 1.}
Suppose first that $n<m$ and that $w'$ admits a rightward critical sequence that
transforms $w'$ to $w''$ where $\l{w''} \in \{a^{-1}, b^{-1}\}$
and $n(\l{w''}v) = n+1$. Then we must have $p+n < m$,
or else $w$ would admit a rightward length reducing sequence.

If $\l{w} = a$, then we find that $w\,\,{}_{m-n-1}(a,b)$ is critically reduced,
but $w\,{}_{m-n}(a,b)$ admits a rightward reducing sequence starting with the
sequence for $w'$. The remaining $n$ letters of $_m{}(a,b)$ then cancel with
a suffix of the reduction of $w\,\,{}_{m-n}(a,b)$,
and we get $\rho(w \,\,{}_m(a,b)) = \p{w''} v_1$ for some
critically reduced 2-generator word $v_1$ that is equal in $G(a,b)$ 
to $\l{w''}v\,\,{}_m(a,b)$. 
There is also a rightward length reducing sequence starting with the same
sequence for $w'$ for $w \,\,{}_{m-n-k}(b,a)$, following which the next
$n$ letters of ${}_m(b,a)$ cancel and, since $k \le p$ and $p+n<m$,
the final $k$ letters provoke no further reductions.
So we have $\rho(w \,\,{}_m(b,a)) = \p{w''} v_2$
with $v_2$ equal in $G(a,b)$ to $\l{w''}v\,\, {}_m(b,a)$. Since $v_1$ and $v_2$
are reduced 2-generator words representing the same element of
$G(a,b)$, Theorem~\ref{artin2} implies that they are equal, so
$\rho(w \,{}_m(a,b)) = \rho(w \,{}_m(b,a))$.

If $\l{w} = a^{-1}$, then $w\, {}_{m-n-1}(b,a)$ is critically reduced,
$w \,{}_{m-n}(b,a)$ admits a rightward length reducing
sequence starting with the sequence for $w'$,
and the remaining $n$ letters of ${}_m(b,a)$ cancel.
There is also a rightward length reducing sequence starting with the same
sequence for $w'$ for $w \,{}_{m-n+k}(a,b)$, following which the
remaining $n-k$ letters of ${}_m(a,b)$ cancel, and the result
follows as in the previous case.

{\bf Case 2.} Suppose then $n=m$ or that $w'$ admits no such rightward critical
sequence. If $\l{w} = a$, then again
$w \,{}_{m-n-1}(a,b)$ is critically reduced, and $w \,{}_{m-n}(a,b)$ admits no
rightward length reducing sequence, but it may admit a leftward lex reducing sequence.
If so, then the remaining $n$ letters of ${}_m(a,b)$ cancel.
In that case, $w \,{}_{m-n-k}(b,a)$ admits a corresponding leftward
lex reducing sequence, following which the next $n$ letters of ${}_m(b,a)$
cancel. Now, since $w \in W$, we must have $k<m-n$ in this
situation, so the final $k$ letters of ${}_m(b,a)$ provoke no further
reductions. So, as in Case 1, we can apply Theorem~\ref{artin2} to
conclude that $\rho(w \,{}_m(a,b)) = \rho(w \,{}_m(b,a))$.

Suppose, on the other hand, that $w \,{}_{m-n}(a,b)$ is critically reduced.
If $n=0$, then $w\,{}_{m-1}(b,a)$ must be critically reduced (because,
if not, then a corresponding reduction could be applied to
$w \,{}_{m-n}(a,b)$), and we have $\rho(w\,{}_m(b,a)) = w\,{}_m(a,b)$.
If $n>0$, then $w \,{}_{m-n+1}(a,b)$ admits a rightward length reducing
sequence of length 1, and the remaining $n-1$ letters of ${}_m(a,b)$ cancel.
Similarly,  $w \,{}_{m-n-k+1}(b,a)$ admits a corresponding rightward length
reducing sequence, and the following $n-1$ letters of ${}_m(b,a)$ cancel.
The final $k$ letters of ${}_m(b,a)$ can provoke no further reductions,
since such a reduction could only result from the final letter in the case
$k=m-n$, but if there were such a reduction then the original word $w$
would admit a corresponding reduction, contradicting $w \in W$.
So the result follows as before in this case.

If $\l{w} = a^{-1}$, then $w \,{}_{m-n-1}(b,a)$ is critically reduced,
and $w \,{}_{m-n}(b,a)$ admits no rightward length reducing sequence.
If $w \,{}_{m-n}(b,a)$ admits a leftward lex reducing sequence, then the
remaining $n$ letters of ${}_m(b,a)$ cancel.
In that case $w \,{}_{m-n+k}(a,b)$ admits a corresponding leftward
lex reducing sequence, and the remaining $n-k$ letters of ${}_m(b,a)$ cancel,
and the result follows as before.

If, on the other hand, $w \,{}_{m-n}(b,a)$ is critically reduced
(note that this occurs, in particular, when $m=n$), then
$w \,{}_{m-n+1}(b,a)$ admits a rightward length reducing sequence of length 1,
as does $w \,{}_{m-n+k+1}(a,b)$, and again the result follows.
\end{proofof}

\section{Geodesics in Artin groups of large type}
\label{geodesics_sec}
\begin{theorem}\label{fftp:thm} Artin groups of large type on their standard generating sets
satisfy FFTP, and hence the set of geodesic words is regular.
\end{theorem}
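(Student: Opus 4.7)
The plan is to establish FFTP for the geodesic language with fellow-traveller constant $M$ from Lemma~\ref{fellowtravel}; the regularity statement then follows from the standard result of Neumann and Shapiro \cite{NeumannShapiro}. Given a non-geodesic $w\in A^*$, the goal is to exhibit $w' =_G w$ with $|w'|<|w|$ such that $w$ and $w'$ $M$-fellow-travel. First I would locate the shortest non-geodesic prefix $w(i) = w(i-1)g$ of $w$, where $g$ is the $i$-th letter of $w$; so $w(i-1)$ is geodesic while $w(i-1)g$ is not. If $g = \l{w(i-1)}^{-1}$, cancelling those two letters gives an $M$-fellow-travelling word $w'$ of length $|w|-2$, so I may assume $w(i-1)g$ is freely reduced.

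The core of the proof is a multi-generator analogue of Lemma~\ref{geodlem}: any freely reduced non-geodesic word of the form $wg$ with $w$ geodesic admits a rightward length reducing critical sequence whose tail is $g$. Given this, Lemma~\ref{fellowtravel} produces from $w(i-1)g$ a word $w''$ of length $|w(i-1)|-1$ that $M$-fellow-travels with $w(i-1)g$. Replacing the prefix $w(i)$ of $w$ by $w''$ then yields a word $w'$ of length $|w|-2$ with $w' =_G w$ and $w,w'$ fellow-travelling at distance $M$, completing the argument for FFTP.

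To prove the key lemma, I would choose a geodesic $v$ of length $|w(i-1)|-1$ with $v =_G w(i-1)g$; then $vg^{-1}$ is a second geodesic for the element $w(i-1)$, this one ending in $g^{-1}$. By Theorem~\ref{Wslred}, both $w(i-1)$ and $vg^{-1}$ reduce via critical sequences to a common shortlex-minimal word, so they are linked by a chain of critical moves. Among all such chains, selecting one of minimal length and tracking the first move that alters the final letter, in the style of Corollary~\ref{2gen_fftp} and Lemma~\ref{gju_lemma}, should extract a single rightward critical sequence on $w(i-1)$ that transforms its last letter into $g^{-1}$. The casework is expected to parallel Cases~1 and~2 of the proof of Proposition~\ref{Wclosedngen}, drawing on the locality conclusions of Lemmas~\ref{lem1} and~\ref{lem2}.

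The principal obstacle is the isolation of such a single rightward sequence acting at the right-hand end of $w(i-1)$: intermediate moves in the linking chain may act elsewhere in the word, or may be leftward rather than rightward, and a careful inductive argument on the length of the chain will be needed to show that such extraneous moves can be bypassed or rearranged without destroying the fact that the final letter eventually becomes $g^{-1}$. Once the key lemma is in place, the rest of the proof is routine; regularity of the geodesic language follows immediately from FFTP by the standard finite-state construction.
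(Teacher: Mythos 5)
Your overall architecture matches the paper's: both reduce FFTP to the assertion that a geodesic $v$ with $vg$ non-geodesic and freely reduced can be modified, within $M$-fellow-travelling distance, into a geodesic representative of the same element ending in $g^{-1}$ (the paper phrases this as Proposition~\ref{fftp}, about two geodesics for the same element with different last letters). However, your key lemma claims more than the paper proves. You assert that $vg$ itself admits a single rightward length reducing critical sequence with tail $g$. Proposition~\ref{Wclosedngen} establishes this only for $v\in W$, i.e.\ for shortlex-minimal $v$, and its proof leans repeatedly on that hypothesis (``since $w\in W$ it cannot admit a rightward length reducing sequence\dots''). For an arbitrary geodesic $v$ the paper settles for the weaker conclusion that $v$ $M$-fellow travels with \emph{some} geodesic ending in $g^{-1}$, and the word it constructs (see Lemma~\ref{wred3}, Case~(B)) has the form $\p{w_0'}w_2'$: a recursively modified prefix concatenated with a separately modified 2-generator suffix. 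That is not, in general, the output of one rightward critical sequence, so Lemma~\ref{fellowtravel} cannot simply be invoked; the fellow-travelling must be assembled blockwise from the pieces.

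The larger gap is that the key lemma is where all the work lives, and your sketch of its proof (``both words reduce to a common shortlex-minimal word, so they are linked by a chain of critical moves; select a minimal chain and track the first move that alters the final letter'') is a restatement of the difficulty rather than an argument. Moves in such a chain can occur anywhere in the word and in either direction, and localizing their net effect at the right-hand end is precisely the hard part. The paper does this by induction on $|v|$, supported by an apparatus you have not supplied: Lemma~\ref{genandinv} (that $wx$ and $wx^{-1}$ cannot both be non-geodesic), Proposition~\ref{twoposs} (that the last letters of all geodesic representatives of an element take at most two values, with distinct names, and that the maximal 2-generator suffixes involve exactly those two names), and Lemmas~\ref{wred1}--\ref{wred3} tracking how the maximal 2-generator suffix $u^{(i)}$ evolves under the reduction $\rho$. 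You correctly identify this as ``the principal obstacle'', but flagging it is not closing it: as written, the proposal proves the theorem only modulo a lemma that is at least as hard as the theorem itself and is stated in a form stronger than anything the paper establishes.
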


The rest of this section is devoted to the proof of this theorem.
Throughout this section, $G$ will be an Artin group of large type over $X$,
and $W$ the set of shortlex minimal representatives of its elements.
We start with a useful technical result.

\begin{lemma}\label{genandinv}
If $w \in W$, $x \in X$ and $wx$ and $wx^{-1}$ are both freely reduced,
then $wx$ and $wx^{-1}$ cannot both
be non-geodesic.
\end{lemma}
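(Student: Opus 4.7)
The plan is to reduce to the two-generator case by focusing on the maximal 2-generator suffix of $w$ involving the relevant pair of generators. Write $a$ for the name of $x$ and $b$ for the name of $\l{w}$; since both $wx$ and $wx^{-1}$ are freely reduced, $a \neq b$. Let $v$ be the maximal 2-generator suffix of $w$ involving only generators in $\{a, b\}$. As a subword of $w \in W$, $v$ is shortlex minimal and hence geodesic in $G$; using that the subgroup $G(a,b)$ is isomorphic to $\DA_{m_{ab}}$ (a consequence of Theorem~\ref{Wslred}), $v$ is also 2-geodesic in $\DA_{m_{ab}}$, so $p(v) + n(v) \le m_{ab}$.

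As a preliminary I would establish the pure two-generator version of the lemma: in $\DA_m$, a 2-geodesic word $v$ cannot have both $vx$ and $vx^{-1}$ non-2-geodesic when both extensions are freely reduced. This is a direct case analysis using the criterion $p+n \le m$: since $x$ is positive, $n(vx) = n(v)$, so $vx$ non-2-geodesic forces $p(v) + n(v) = m$ and $p(vx) = p(v) + 1$, which requires $v$ to end in a positive alternating suffix terminating in the (positive) letter $b$; symmetrically $vx^{-1}$ non-2-geodesic requires $v$ to end in the negative letter $b^{-1}$. These conditions on $\l{v}$ are incompatible, with the degenerate cases $p(v) = 0$ or $n(v) = 0$ eliminated by short direct computation.

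Assuming for contradiction that both $wx$ and $wx^{-1}$ are non-geodesic, the goal is to show that $vx$ and $vx^{-1}$ are both non-2-geodesic in $\DA_{m_{ab}}$, contradicting the preliminary. By Case 1 of the proof of Proposition~\ref{Wclosedngen}, each of $wx$ and $wx^{-1}$ admits a rightward length reducing sequence. The final factor of each such sequence must share a generator with the trailing $x^{\pm 1}$ (forcing $a$ in the pair), and its other generator is forced to be $b$ by the factorisation condition together with Lemma~\ref{lem1}(6); so both final factors lie in $\{a,b\}$. The associated critical words $u_k$ and $u'_{k'}$ then satisfy that $u_k x$ and $u'_{k'} x^{-1}$ are non-2-geodesic in $\DA_{m_{ab}}$, since $\tau(u_k)x$ and $\tau(u'_{k'})x^{-1}$ respectively admit free reductions. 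When both sequences have length $1$, these critical words are suffixes of $v$, so non-2-geodesicity transfers directly to $vx$ and $vx^{-1}$, completing the argument in that case.

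The main obstacle is the case where one of the sequences has length greater than $1$: the associated critical word then has a letter prepended from the preceding $\tau$-move and is not a subword of $w$, so non-2-geodesicity of $u_k x$ does not immediately yield that of $vx$. I expect to resolve this by applying the earlier $\tau$-moves of the sequence to $w$ itself (rather than to $wx$) to obtain a modified geodesic representative $\tilde{w}$ of $w$ whose maximal 2-generator suffix in $\{a,b\}$ is precisely $u_k$, and then running the argument with $\tilde{w}$ in place of $w$; iterating reduces both sequences to length one. The delicate technical point is carrying out these modifications compatibly for the two reductions while preserving the hypotheses of the lemma.
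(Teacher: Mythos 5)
Your base case (the pure 2-generator computation with $p+n\le m$) and your observation that both $wx$ and $wx^{-1}$ must admit rightward length reducing sequences whose final factors lie over $\{a,b\}$ match the paper, and your length-one case is sound. But the case you defer --- where at least one sequence has length greater than $1$ --- is the heart of the proof, and the strategy you sketch for it does not work as stated. The two reducing sequences apply $\tau$-moves to different (and generally overlapping) portions of $w$, so there is no single modified representative $\tilde{w}$ on which both reductions become length one; moreover, once you replace $w$ by $\tilde{w}$ you lose the hypothesis $w\in W$, so Proposition~\ref{Wclosedngen} and Lemma~\ref{lem1} no longer control the reducing sequences of $\tilde{w}x^{\pm 1}$, and you have exhibited no decreasing quantity that would make the proposed iteration terminate. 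The ``delicate technical point'' you flag is precisely the missing content.

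The paper closes this gap by inducting on $|w|$ rather than iterating $\tau$-moves. Writing $w=\alpha u$ with $u$ the maximal 2-generator suffix, when both sequences have length greater than $1$ their last factors are $gu$ and $hu$ for the \emph{same} $u$, where $g$ and $h$ are the last letters produced by the penultimate $\tau$-moves applied to $\alpha$. A short 2-generator computation (using that all proper subwords of $gux$ and $hux^{-1}$ are geodesic) forces $n(hux^{-1})=1$, $p(u)=m-1$, $h\in X$ with $h\ne\f{u}$, and symmetrically $g\in X^{-1}$ with $g\ne\f{u}^{-1}$, whence $g=h^{-1}$. Since the first $k-1$ moves of each sequence rewrite $\alpha$ as an equal-length word ending in $g$, respectively $h$, both $\alpha g$ and $\alpha g^{-1}$ are non-geodesic and freely reduced, and the inductive hypothesis applied to the proper prefix $\alpha$ with the positive letter $h$ gives the contradiction. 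The mixed case (one sequence of length $1$, one longer) is handled by applying all but the last move of the longer sequence and passing to the maximal 2-generator suffix of the result. You would need to supply an argument of this kind --- some induction on a genuinely smaller instance of the same statement --- to complete your proof.
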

\begin{proof} 
We use induction on $|w|$. The result is clear if
$w$ involves at most two generators because it is easily seen that
$p(wx)+n(wx)>m$ and $p(wx^{-1})+n(wx^{-1})>m$ cannot both hold,
given that $w$ is geodesic.
Otherwise, if $wx$ and $wx^{-1}$ are both non-geodesic, then
Proposition~\ref{Wclosedngen} implies that $wx$ and $wx^{-1}$ both admit
rightward length reducing sequences. It follows from the 2-generator case that
these sequences cannot both have length 1.

Suppose that one of these sequences, the one for $wx$ say, has length 1,
and the other has length greater than 1.
Let $w_1$ be the result of applying all $\tau$-moves except for the last
in the reduction sequence for $wx^{-1}$, and let $u_1$ be the maximal
2-generator suffix of $w_1$. Then $u_1x$ and $u_1x^{-1}$ are both non-geodesic,
so the result again follows from the 2-generator case.

Finally, suppose that both sequences have length greater than 1, and let
$w = \alpha u$, where $u$ is the maximal 2-generator suffix of $w$.
Then applying all terms except the last in the reduction sequences for
$wx$ and $wx^{-1}$ transforms $\alpha$ to words with last letters $g$ and
$h$, where $gux$ and $hux^{-1}$ are 2-generator words with
$p(gux)+n(gux)>m$ and $p(hux^{-1})+n(hux^{-1})>m$, but all proper subwords
of $gux$ and $hux^{-1}$ are geodesic.
Suppose without loss of generality that $\l{u} \in X$. Then since
$p(hux^{-1}) = p(hu)$, we must have $n(hux^{-1}) > n(hu)$, which is only
possible if $n(hux^{-1})=1$, $p(u)=m-1$ and $p(hu) = m$. So we must
have $h \in X$ and $h \ne \f{u}$. Similarly, we find that $p(ux)=m$ and
$n(gux)=1$, so $g \in X^{-1}$. But we cannot have $g = \f{u}^{-1}$, and
so we must have $g=h^{-1}$.
But then $\alpha g$ and $\alpha g^{-1}$ are both non-geodesic, and freely
reduced, by our definition of $\alpha$, and the
result follows by the inductive hypothesis applied to $\alpha$.
\end{proof}

In order to prove the theorem we need to examine in detail the process of
reduction of a geodesic word $v$ to its shortlex minimal representative
$\rho(v)$, and prove a number of technical results. 
We shall use all the notation we established in the previous sections, and 
introduce some more.

The reduction is done in at most $n:=|v|$ steps, through a sequence of
words $v^{(0)}=v,v^{(1)},\cdots ,v^{(n)}=\rho(v)$; for each $i$ from
$1$ to $n$, $v^{(i)}$ is either equal to $v^{(i-1)}$ or is derived from it by replacing its prefix of
length $i$ by its lex reduction.
When $v^{(i)} \neq v^{(i-1)}$, Proposition~\ref{Wclosedngen} says that
the reduction is through a single leftward lex
reducing sequence of which the first $\tau$-move is applied to a word ending
at the $i$-th letter of $v^{(i-1)}$.

In general we assume that $v$ involves at least three generators (the 2-generator
case being dealt with in Section~\ref{dihedral_sec}).
In that case, we define $u$ to be the maximal 2-generator suffix of $v$, and let $a,b$ be the
names of the two generators involved in $u$.
Similarly for each $i$ we define $u^{(i)}$ to be the maximal
suffix of $v^{(i)}$ involving $a$ and $b$ (conceivably $u^{(i)}$ might be empty or
involve just one of those two generators).
Then $v = \alpha gu$ with $g \in A$, where the name of $g$ is neither $a$ nor $b$.
Let $k := |\alpha g|$; so $v^{(k)}=\rho(\alpha g)u$.
We have $u^{(1)}=u^{(2)}=\cdots = u^{(k-1)}=u$.

Let $h := \l{\rho(\alpha g)}$, and suppose that $h$ has name $c$.
Our arguments will divide into two cases:
(A) $c$ is neither $a$ nor $b$;
(B) $c$ is equal to one of $a$ or $b$.

The following two lemmas summarise the properties that we shall need in these
two cases.

\begin{lemma}\label{wred1} Assume that we are in Case (A). Then: 
\begin{mylist}
\item[(1)] $u^{(k)} = u$,
\item[(2)] If $v^{(n-1)} \neq v^{(n)}$ then 
$u^{(n)}$ involves both $a$ and $b$.
\item[(3)] For each $m$ with $k \le m \le n$, $u^{(k)}$ is equal in $G$ to
a geodesic word having $u^{(m)}$ as a suffix.
\end{mylist}
\end{lemma}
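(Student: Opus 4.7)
Part~(1) is immediate from the definitions. Since $v^{(k)} = \rho(\alpha g) \cdot u$ and the last letter $h = \l{\rho(\alpha g)}$ has name $c \notin \{a,b\}$, the maximal suffix of $v^{(k)}$ involving only generators of names $a$ and $b$ is exactly $u$; hence $u^{(k)} = u$.

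For (3) I would proceed by induction on $m$ from $k$ up to $n$, with the base case $m = k$ given by (1) (take the witness geodesic representative of $u$ to be $u$ itself). At each intermediate $m$, write $\rho(v[1..m]) = \beta^{(m)} \eta^{(m)}$ where $\eta^{(m)}$ is the maximal $\{a,b\}$-suffix of $\rho(v[1..m])$, so that $u^{(m)} = \eta^{(m)} \cdot v[m+1..n]$. If no lex reduction occurs at step $m+1$, then $\eta^{(m+1)} = \eta^{(m)} v[m+1]$ and $u^{(m+1)} = u^{(m)}$, so the inductive witness transfers directly. If a reduction does occur, let $w'$ be the critical suffix of $\rho(v[1..m]) v[m+1]$ on which the first $\tau$-move of the leftward lex reducing sequence acts.

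The argument then proceeds by case analysis on $w'$. Because $\beta^{(m)}$ ends in a letter whose name is outside $\{a,b\}$, if $w'$ lies in the pair $\{a,b\}$ it must be entirely contained in $\eta^{(m)} v[m+1]$, and any subsequent $\tau$-moves in the leftward sequence that remain in $\{a,b\}$ also stay inside $\eta^{(m)} v[m+1]$. Thus either the whole sequence consists of $\{a,b\}$-moves, in which case $\eta^{(m+1)}$ is the rewriting $\eta'$ of $\eta^{(m)} v[m+1]$ (and so $u^{(m+1)} =_{G(a,b)} u^{(m)}$, making the witness transfer), or the sequence escapes into a pair $\{a',c\}$ with $c \notin \{a,b\}$ and $a' \in \{a,b\}$; in the latter case the escape $\tau$-move flips the leftmost letter of $\eta'$ from name $a'$ to name $c$, shrinking the $\{a,b\}$-suffix by one letter, and the new witness is obtained by extending the previous one on the right by that flipped $a'$-letter (regarded as a factor in $G(a,b)$), using Corollary~\ref{2gen_fftp} to rectify the result into a genuine geodesic in $\{a,b\}^*$. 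The case in which $w'$ itself is in a mixed pair is handled analogously, with the boundary of $\eta$ shifting in the same controlled manner. Part~(2) then follows by applying this analysis at step $n$: if a reduction occurs and the first critical suffix $w'$ were in $\{a,c\}$ with $c \notin \{a,b\}$, the last letter of $v^{(n)} = \rho(v)$ would have name $c$ and $u^{(n)}$ would be empty; ruling this out reduces to showing, via the inductive control of (3) on the structure of $\eta^{(n-1)}$ combined with the geodesicity of $v$, that the positions covered by $w'$ in $v^{(n-1)}$ cannot all consist of letters of names in $\{a,c\}$. Hence $w'$ is in $\{a,b\}$, $\tau(w')$ is a suffix of $v^{(n)}$ lying in $\{a,b\}^*$ of length at least $m_{ab} \ge 3$, and since critical words of such length in $\{a,b\}$ necessarily involve both generators, $u^{(n)}$ contains $\tau(w')$ and thus involves both $a$ and $b$.

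The main obstacle is the bookkeeping in the mixed-pair subcase of the induction and in the exclusion argument for (2): when the leftward lex reducing sequence threads through several generator pairs via the critical-overlap relation, the boundary between $\eta^{(m)}$ and $\beta^{(m)}$ can shift several positions, and one must argue carefully that the corresponding geodesic witness in $G(a,b)$ is maintained throughout, invoking the 2-generator results of Section~\ref{dihedral_sec} (notably Theorem~\ref{artin2} and Corollary~\ref{2gen_fftp}) at each step to control the algebraic rewriting.
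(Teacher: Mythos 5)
Your treatment of (1) is correct, and your induction for (3) is, once the garbled description of the witness update is straightened out, essentially the argument the paper gives: the paper chains together the \emph{first} $\tau$-move of each non-trivial leftward lex reducing sequence, applying them to $u$ to produce geodesics $\hat{u}^{(m)}$ in $G(a,b)$ each having $u^{(m)}$ as a suffix (so no appeal to Corollary~\ref{2gen_fftp} is needed to ``rectify'' anything -- a $\tau$-move applied to a geodesic witness yields another geodesic witness of the same length, and in the escape case $u^{(m+1)}$ is simply a proper suffix of that image). Two small inaccuracies: when the leftward sequence has length greater than $1$ and its first factor lies in $\{a,b\}$, only $\s{\tau(w')}$ survives as a suffix of $v^{(m+1)}$ (the first letter of $\tau(w')$ is absorbed into the next factor), so in part (2) your claim that ``$\tau(w')$ is a suffix of $v^{(n)}$'' needs Lemma~\ref{lem2}(2) to conclude that the surviving piece still involves both generators.

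The genuine gap is in part (2), and it is exactly the point you defer. You correctly identify that one must exclude the possibility that the first critical suffix at step $n$ lies in a mixed pair $\{a,c\}$ (equivalently, that the ``$c$-contamination'' of the letters of $u$ propagates all the way to position $n$), but the sentence ``ruling this out reduces to showing \dots that the positions covered by $w'$ in $v^{(n-1)}$ cannot all consist of letters of names in $\{a,c\}$'' is a restatement of the problem, not a proof, and nothing in your inductive setup for (3) supplies it: the induction for (3) is perfectly consistent with the boundary of the $\{a,b\}$-suffix marching one step to the right at every stage. The paper's mechanism is concrete: let $l$ be maximal with the $l$-th letter of $v^{(l)}$ of name $c$. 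For each step $k<m\le l$ at which the letter in position $m$ acquires name $c$, the responsible first critical suffix is a word in $\{x,c\}$ for some $x\in\{a,b\}$ ending at position $m$, and because such a critical word must end in an alternating block of length at least $3$ and overlaps the critical suffix used at the previous such step, the generator $x$ is forced to be the \emph{same} element of $\{a,b\}$ at every step. Hence if $l=n$, every letter of $u$ would have name $x$, contradicting the fact that $u$, being the maximal 2-generator suffix, involves both $a$ and $b$. This forces $l<n$, after which every first $\tau$-move lies in the pair $\{a,b\}$ and (2) follows. Without some version of this ``same generator each time'' observation your proof of (2) does not close.
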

\begin{proof}
(1) is clear from the definition of Case (A).
We examine the reduction of $\rho(\alpha g)u$ to
$\rho(\rho(\alpha g)u)=\rho(v)$.
For each $m>k$, if $v^{(m-1)}$ and
$v^{(m)}$ are distinct, the names of the $m$-th letters of $v^{(m-1)}$ and
$v^{(m)}$ are the two generators of the maximal 2-generator subword that ends at
the $m$-th letter of $v^{(m-1)}$.  Let $l$ be maximal such that $l\geq k$
and the $l$-th letter of $v^{(l)}$ has name $c$.

We see that $l<n$. This is obvious if $l=k$.
If $l>k$ then for each $k < m \leq l$, the prefix of length $m$ in
$v^{(m-1)}$ has a critical suffix involving $c$ and one of $a,b$;  
the fact that $l<n$ follows immediately from the fact that
it must involve the same one each time (for each critical
suffix must end with an alternating subword of length at least 3).

Now by definition of $l$, any reduction of $v^{(m-1)}$
to $v^{(m)}$ with $m>l$ must start with a $\tau$-move involving $a$ and $b$. 
So if $v^{(n-1)} \neq v^{(n)}$  the maximal 2-generator suffices of both
$v^{(n-1)}$ and $v^{(n)}$ must contain both $a$ and $b$, and we have (2).

Now for each $m$ with $k \leq m \leq l$, $u^{(m)}$ is a suffix of $u^{(k)}=u$,
so (3) holds for all such $m$.
For any $m > l$, if $v^{(m)} \ne v^{(m-1)}$,
then the first $\tau$-move in that reduction is to a subword of $u^{(m-1)}$,
and $u^{(m)}$ is a suffix
of the word derived from $u^{(m-1)}$ by applying the first $\tau$-move of
that reduction. Hence we see that we could take the sequence of $\tau$-moves
that form the first steps of each of the non-trivial leftward lex reducing
sequences that reduce $v^{(l)}$ through $v^{(l+1)},v^{(l+2)},\ldots$ to
$v^{(n)}=\rho(v)$,
This sequence of $\tau$-moves  transforms $u$ through a sequence of geodesics
$\hat{u}^{(k+1)},\ldots,\hat{u}^{(n)}$, with $u^{(m)}$ a suffix of
$\hat{u}^{(m)}$ and $\hat{u}^{(m)}=_G u$ and for each $m$ with $k<m \leq n$.
This completes the
proof of (3).
\end{proof}

\begin{lemma}\label{wred2} Assume that we are in Case (B). Then:
\begin{mylist}
\item[(1)] $u^{(k)} = h^ju$ for some $j \ge 1$.
\item[(2)] $u^{(n)}$ involves both $a$ and $b$.
\item[(3)] For each $m$ with $k \le m \le n$, $u^{(k)}$ is equal in $G$ to
a geodesic word having $u^{(m)}$ as a suffix.
\end{mylist}
\end{lemma}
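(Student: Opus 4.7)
The plan falls into three steps, corresponding to (1), (2) and (3).

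First, for (1), I would observe that since $v$ is geodesic, so is the prefix $\alpha g$, and hence (because the reduction sequence $v^{(0)},\ldots,v^{(n)}$ cannot contain a rightward length reducing step without shortening a prefix of a geodesic word) the transformation of $\alpha g$ into $\rho(\alpha g)$ is achieved by a single leftward lex reducing sequence. Using the section's notation with factorisation $\alpha g = \alpha w_k \cdots w_1$, we get $\rho(\alpha g) = \alpha \tau(u_k) u'_{k-1} \cdots u'_1$ when $k \ge 2$, and $\rho(\alpha g) = \alpha \tau(w_1)$ when $k = 1$. Since $\l{w_1} = g$, the critical word $w_1$ involves the two generators with names $d := $ name of $g$ and $c := $ name of $h$, where $d \notin \{a,b\}$ and $c \in \{a,b\}$.

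Second, I would identify the trailing letters of $\rho(\alpha g)$ whose names lie in $\{a,b\}$: these must occur within $u'_1$ (or within $\tau(w_1)$ when $k = 1$), and inside that piece the $\{a,b\}$-named letters are exactly those of name $c$. By free reducedness consecutive name-$c$ letters must coincide with $h$, so the trailing name-$c$ run is $h^j$ for some $j \ge 1$. The crux is that this run cannot exhaust $u'_1$ (respectively $\tau(w_1)$): if it did, then $\tau(w_1)$ would have the form $h^* h^j$ with $h^*$ a single letter of name $d$ (necessarily present, because $\tau(w_1)$ is critical and therefore involves both of its generators, and necessarily at the first position). But a direct count over the four sign possibilities for the pair $(h^*,h)$ gives $p(\tau(w_1))+n(\tau(w_1)) \le 2$, contradicting the equality $p(\tau(w_1))+n(\tau(w_1)) = m \ge 3$ for the Coxeter exponent $m$ of the pair $\{d,c\}$, as guaranteed by Proposition~\ref{critical_properties}(2). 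Hence the letter immediately preceding the $h$-run has name $d \notin \{a,b\}$, and so $u^{(k)} = h^j u$.

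Third, for (2) and (3) I would imitate the tracking argument in the proof of Lemma~\ref{wred1}, with base case $u^{(k)} = h^j u$ rather than $u^{(k)} = u$. For each $m$ with $k < m \le n$, if a reduction occurs at step $m$ then its first $\tau$-move acts on a critical 2-generator subword ending at position $m$ of $v^{(m-1)}$, and one builds in parallel a geodesic word $\hat{u}^{(m)}$ equal in $G$ to $u^{(k)}$ and having $u^{(m)}$ as a suffix, by applying (a restriction of) the same $\tau$-move to $\hat{u}^{(m-1)}$; this yields (3). For (2), note that because $u^{(k)} = h^j u$ already involves both $a$ and $b$ (since $u$ does), and every $\tau$-move acting within $u^{(m-1)}$ replaces a critical subword by another critical subword in the same pair of generators (and critical words involve both of their generators, so $u^{(m)}$ retains both names $a$ and $b$), $u^{(n)}$ will involve both $a$ and $b$ without needing the conditional hypothesis $v^{(n-1)}\ne v^{(n)}$ used in Lemma~\ref{wred1}(2).

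The main obstacle I anticipate is the bookkeeping for (3) in the straddling case: when the critical subword at step $m$ has exactly one generator in $\{a,b\}$ (necessarily equal to $c$), so that it extends strictly to the left of $u^{(m-1)}$. One has to identify which portion of $\hat{u}^{(m-1)}$ to act on, and verify both that $u^{(m)}$ emerges as a genuine suffix of the resulting $\hat{u}^{(m)}$ and that equality in $G$ is preserved. This will parallel the corresponding part of the proof of Lemma~\ref{wred1}(3), but with the roles of the names shifted because here $c \in \{a,b\}$ rather than $c \notin \{a,b\}$.
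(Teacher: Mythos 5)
Your treatment of part (1) is sound and essentially matches the paper's: the paper simply cites Lemma~\ref{lem2}(2) to see that the last factor $u'_1$ of $\rho(\alpha g)$ involves both of its two generators (one of which is the name of $g$, hence not in $\{a,b\}$), so that $\rho(\alpha g)=\eta g'h^j$ with $g'=g^{\pm 1}$; your direct count showing $p+n\le 2<m$ for a word of the form $h^*h^j$ proves the same fact by hand and correctly covers the length-one case.

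For parts (2) and (3), however, there is a genuine gap, and it sits exactly where you parked the ``main obstacle.'' Your argument for (2) is conditioned on ``every $\tau$-move acting within $u^{(m-1)}$,'' and your plan for (3) proposes to \emph{handle} the straddling case (a first $\tau$-move at step $m$ applied to a critical word in $c$ and some generator outside $\{a,b\}$, reaching left past $g'$). But that case must be \emph{ruled out}, not accommodated: if the first $\tau$-move at step $m$ were applied to such a straddling critical word, the $m$-th letter of $v^{(m)}$ would acquire a name outside $\{a,b\}$ (Proposition~\ref{critical_properties}(3)), and at $m=n$ this would make $u^{(n)}$ fail to involve both $a$ and $b$ — which is why (2) holds unconditionally here, unlike Lemma~\ref{wred1}(2). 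The paper's key step, absent from your proposal, is the claim that $\eta g'h^{j'}$ is already critically reduced for every $j'>j$: any critical suffix $v'$ of $\eta g'h^{j'}$ has the form $v''h^{j'-j}$ with $v''$ a critical suffix of $\eta g'h^{j}=\rho(\alpha g)$, and Corollary~\ref{critical_subword} gives $\f{\tau(v')}=\f{\tau(v'')}$, so a leftward lex reducing sequence starting at $v'$ would yield one for $\rho(\alpha g)$, contradicting $\rho(\alpha g)\in W$. Since any straddling critical word ending at position $m$ would have to be a suffix of some $\eta g'h^{j'}$ (the intervening letters of name in $\{a,b\}$ being forced to be powers of $h$ by free reduction and the two-generator constraint), no straddling occurs, every first $\tau$-move for $m>k$ lands inside $u^{(m-1)}$, and both the unconditional statement (2) and the tracking argument for (3) then go through. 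Modelling the straddling case on Lemma~\ref{wred1}(3) would not work here, because in Case (A) the straddling moves genuinely occur and merely shorten $u^{(m)}$, whereas in Case (B) they would destroy the conclusion of (2).
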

\begin{proof}
It follows from Lemma~\ref{lem2} (2) that
$\rho(\alpha g) = \eta g'h^j$, for some word $\eta$ and $j \ge 1$, where
$g' = g^{\pm 1}$, and so $u^{(k)}=h^ju$, and (1) holds.

To prove (2) and (3), we consider the further reduction of
$v^{(k)} = \rho(\alpha g)u$.
Again we consider the sequence $v^{(k+1)},\cdots,v^{(n)}$ of successive
reductions of $v^{(k)}$ to $v^{(n)}$.
 
We claim that, for any $j'>j$, $\eta g'h^{j'}$ is already reduced.
To see that, note that a critical suffix
$v'$ of $\eta g'h^{j'}$ must have the form $v''h^{j'-j}$ where $v''$ is a
critical suffix of $\eta g'h^j$. And then by Corollary~\ref{critical_subword} 
$\tau(v')$ and $\tau(v'')$ have the same
first letter. So if $v'$ were part of a critical factorisation leading to a
leftward lex reducing sequence of $\eta g'h^{j'}$ then $\eta g'h^j$ would also
have such a reduction, which it does not, since $hg'h^j=\rho(\alpha g) \in W$.

So the first $\tau$-move in any non-trivial reduction of $v^{(m-1)}$ to
$v^{(m)}$ for $k < m \le n$ is to a subword of $u^{(m-1)}$.
Since $u^{(k)}$ involves both $a$ and $b$, the same applies to
$u^{(m)}$ for all $k < m \le n$, which proves (2).

Much as in Case (A), we see that this sequence of first $\tau$-moves
can be applied to $u^{(k)} = h^ju$ to transform it
through a sequence of geodesics
$\hat{u}^{(k+1)},\ldots,\hat{u}^{(n)}$, with $u^{(m)}$ a suffix of
$\hat{u}^{(m)}$ and $\hat{u}^{(m)}=_G u^{(k)}$ and for each $m$ with
$k<m \leq n$, so (3) is true.
\end{proof}

\begin{proposition}\label{twoposs}
Suppose that $v,w$ are any two geodesics in $G$ representing the same group
element, and that $\l{v} \ne \l{w}$.
Then:
\begin{mylist}
\item[(1)] $\l{v}$ and $\l{w}$ have different names;
\item[(2)] The maximal 2-generator suffices of $v$ and $w$ involve generators
with names equal to those of $\l{v}$ and $\l{w}$;
\item[(3)]
Any geodesic word equal in $G$ to $v$ must end in $\l{v}$ or in $\l{w}$.
\end{mylist}
\end{proposition}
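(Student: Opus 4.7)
The plan is to prove (1) via a simple length-preservation dichotomy together with Lemma~\ref{genandinv}, and then derive (2) and (3) using Lemmas~\ref{wred1} and~\ref{wred2}. The 2-generator case was handled in Section~\ref{dihedral_sec}, so I assume $v$ (and hence the common group element $e$) involves at least three generators, and set $s=\rho(v)=\rho(w)$.

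\emph{A dichotomy for $\l s$.} Since $v$ is geodesic, $|v|=|s|$, and every step $v^{(i-1)}\to v^{(i)}$ of the reduction must preserve length; this rules out both free cancellation and rightward length-reducing sequences (each decreases length by $2$), leaving only trivial steps or leftward lex reducing sequences. At step $n$, applied to $\rho(c_1\cdots c_{n-1})\cdot c_n$ with $\rho(c_1\cdots c_{n-1})\in W$, any leftward lex reducing sequence must have its last factor $w_1$ ending at $c_n=\l v$ (otherwise the sequence would reduce a prefix of $\rho(c_1\cdots c_{n-1})\in W$); Proposition~\ref{critical_properties}(3) then forces the name of the last letter of $s$ to differ from the name of $\l v$. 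So either $\l s=\l v$ or $\l s$ and $\l v$ have different names, and the same dichotomy holds for $w$.

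\emph{Proof of (1).} Four cases arise. The case $\l s=\l v$ and $\l s=\l w$ is impossible since $\l v\ne\l w$. The cases $\l s=\l v$ (combined with $\l s$ and $\l w$ having different names) and its symmetric counterpart each immediately yield distinct names for $\l v$ and $\l w$. In the remaining case $\l s$ has a name different from both $\l v$ and $\l w$; if further $\l v$ and $\l w$ shared a common name $x$, say of the generator $y\in X$, then $\l s\notin\{y,y^{-1}\}$, so $sy$ and $sy^{-1}$ are both freely reduced, and writing $v=v'y^\varepsilon$ and $w=w'y^{-\varepsilon}$ with $v',w'$ geodesic gives $|ey|_G=|ey^{-1}|_G=|v|-1<|s|$, so $sy$ and $sy^{-1}$ are both non-geodesic, contradicting Lemma~\ref{genandinv}.

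\emph{Proofs of (2) and (3).} By Lemma~\ref{wred1}(2) or Lemma~\ref{wred2}(2) (tracing back to the last non-trivial reduction step if the degenerate Case (A) sub-case occurs), the maximal 2-generator suffix $u^{(n)}$ of $s$ involves both generators $\{a,b\}$ of the maximal 2-generator suffix of $v$. The symmetric analysis for $w$ shows this same suffix involves the generator pair $\{a',b'\}$ of the maximal 2-generator suffix of $w$; uniqueness of the maximal 2-generator suffix of $s$ gives $\{a,b\}=\{a',b'\}$, and combined with (1) the two distinct names of $\l v,\l w$ exhaust $\{a,b\}$, proving (2). For (3), for any geodesic $z=_G v$ the same analysis gives that the name of $\l z$ lies in $\{a,b\}$; hence $\l z$ shares its name with either $\l v$ or $\l w$, and applying (1) to the pair $(v,z)$ respectively $(w,z)$ forces $\l z\in\{\l v,\l w\}$.

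The principal obstacle is the crux case in (1): one must ensure that both $sy$ and $sy^{-1}$ are freely reduced and non-geodesic simultaneously so that Lemma~\ref{genandinv} can be applied, which is precisely what the dichotomy delivers once the two last letters share a name. A secondary technical point is handling the Case (A) sub-case of Lemma~\ref{wred1}(2) in which the last reduction step is trivial, but this is resolved by following the reduction back to the last non-trivial step.
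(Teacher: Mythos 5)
Your proof of (1) is correct and essentially identical to the paper's: the dichotomy ``either $\l{\rho(v)}=\l{v}$ or they have different names'', obtained from Proposition~\ref{critical_properties}(3) applied to the final leftward lex reducing step, followed by the application of Lemma~\ref{genandinv} to $\rho(v)y$ and $\rho(v)y^{-1}$ in the residual case, is exactly the published argument. Your derivation of (3) from (1) and (2) is also fine.

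The gap is in (2). You apply Lemma~\ref{wred1}(2)/Lemma~\ref{wred2}(2) ``symmetrically'' to $w$, but those statements require the final reduction step to be non-trivial, i.e.\ $w^{(n-1)}\neq w^{(n)}$, which is equivalent to $\l{w}\neq\l{\rho(w)}$. Since $\rho(v)=\rho(w)$ and $\l{v}\neq\l{w}$, at least one of $v,w$ satisfies this, but the other need not: precisely when $\l{w}=\l{\rho(v)}$ the last reduction step for $w$ is trivial and the lemmas tell you nothing about how the maximal 2-generator suffix of $\rho(w)$ relates to that of $w$. Your parenthetical fix --- ``tracing back to the last non-trivial reduction step'' --- is not supported by the lemmas as stated: if the last non-trivial step occurs at a position $m\le l$ (in the notation of the proof of Lemma~\ref{wred1}, where the reduction involves the outside generator $c$ rather than the pair $a,b$), the untouched suffix of $w$ beyond position $m$ may involve only one of the two generators of $w$'s maximal 2-generator suffix, and the maximal 2-generator suffix of $\rho(w)$ could then a priori involve a different pair. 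This is exactly why the paper proves (2) by induction on $|v|$: in the case $\l{w}=\l{\rho(v)}$ it compares $w$ with the word $v'$ obtained by applying only the first $\tau$-move of the last reduction of $v$, and when the maximal common suffix of $v'$ and $w$ is a power $g^j$ of a single letter it strips off that power and applies the inductive hypothesis to $v'_0$ and $w_0$. You need some version of that induction (or another argument) to close the case in which $w$'s reduction ends with trivial steps.
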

\begin{proof} 
Since $\rho(v)=\rho(w)$, 
either $\l{\rho(v)} \neq \l{v}$ or $\l{\rho(w)}\neq \l{w}$. We assume
without loss of generality that $\l{\rho{v}} \neq \l{v}$.
This implies in particular that $v^{(n-1)} \neq v^{(n)}$.

Then $\l{v}=\l{v^{(n-1)}}$ and $\l{\rho(v)}=\l{v^{(n)}}$, and
$v^{(n-1)}$ and $v^{(n)}$ are related by a leftward lex reducing
sequence. Hence we
can deduce from Proposition~\ref{critical_properties}~(3)
that $\l{v}$ and $\l{\rho(v)}$ have distinct names.
If $\l{\rho(v)}=\l{w}$, then it follows immediately that
$\l{v}$ and $\l{w}$ have distinct names, and so (1) holds.
Otherwise  we can repeat
the argument above, replacing $v$ by $w$, to deduce that $\l{w}$ and
$\l{\rho(w)}=\l{\rho(v)}$ have distinct names.
In that case, if (1) is false, then we must have $\l{v}=g$ and $\l{w}=g^{-1}$ for
some $g \in A$,
and so $vg^{-1}$ and $wg$ cannot be geodesic, and neither can 
$\rho(v)g^{-1}$ or $\rho(v)g=\rho(w)g$.
Since both $\rho(v)g^{-1}$ and $\rho(v)g$ are freely reduced, this
contradicts Lemma~\ref{genandinv}. So (1) is true.

Now we prove (2) by induction on $|v|$.
The application of a $\tau$-move to a word does not change the generators 
it involves.
So if $v$ involves at most two generators, then $w$ involves the same
ones, and the result is immediate.

So suppose that $v$ involves at least three generators.
Since $v^{(n-1)} \ne v^{(n)}$, it follows from
Lemmas~\ref{wred1} (2) and~\ref{wred2} (2) that the two generators involved
in the maximal 2-generator suffix of $\rho(v)$ are the same as those
in the maximal 2-generator suffix of $v$.

If $\l{w} \neq \l{\rho(v)}$ then we can apply the argument of the last
paragraph to $w$
in place of $v$, and then (2) is proved. 
So suppose that $\l{w} = \l{\rho(v)}$.
We need to prove that the maximal 2-generator suffix of $w$ involves the same
two generators as that of $\rho(v)$.
Let $v'$ be the result of applying the first $\tau$-move in the reduction
of $v^{(n-1)}$ to $v^{(n)}=\rho(v)$. Then also $\l{w}=\l{v'}$.
Consider the maximal suffix common to $v'$ and $w$.
If this involves two generators then the result is proved, so assume not.
Then $v' = v'_0g^j$ and $w=w_0g^j$ for some $j\geq 1$, and $v'_0=_G w_0$.
Since $v'$ has a critical word as a suffix, $v'_0$ must involve both
of the final two generators  involved in the maximal 2-generator suffix
of $\rho(v)$, so (2) follows by applying induction to the words
$v'_0$ and $w_0$.

(3) now follows from (1) and (2).
\end{proof}

To prove Theorem~\ref{fftp:thm}, it is enough to show that any minimal non-geodesic word
in the generators of $G$ $M$-fellow travels with
a geodesic word representing the same group element. So suppose $vg$ is
minimal non-geodesic with $g \in A$. The result is clear if
$\l{v} = g^{-1}$ so suppose not.  We have $vg =_G v'$ with $|v'| = |v| - 1$ and
hence $w:=v'g^{-1}$ and $v$  are geodesic words representing the same group
element. So it is enough to prove the following proposition.

\begin{proposition}\label{fftp}
Suppose that $v =_G w$ with $v,w$ both geodesic, and
$\l{v} \ne \l{w}$. Then $v$ $M$-fellow travels with a geodesic word $w'$
with $v=_Gw'$ and $\l{w'} = \l{w}$.
\end{proposition}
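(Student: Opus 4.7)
My plan is to construct $w'$ from $v$ by applying a single rightward critical sequence, so that Lemma~\ref{fellowtravel} immediately yields the required $M$-fellow travel. By Proposition~\ref{twoposs} the letters $\l{v}$ and $\l{w}$ have distinct names $a$ and $b$, and the maximal 2-generator suffixes of both $v$ and $w$ involve precisely the generators with those names. When $v$ is itself a 2-generator word the proposition reduces to Corollary~\ref{2gen_fftp} via the identification $G(a,b)\cong\DA_{m_{ab}}$ provided by Theorem~\ref{Wslred}, so I assume from now on that $v$ involves at least three generators.

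The main idea runs as follows. Set $g:=\l{w}$. Since $v=_Gw$ and $|v|=|w|$, the word $wg^{-1}$ is geodesic of length $|v|-1$, so $vg^{-1}=_Gwg^{-1}$ is non-geodesic; and since $\l{v}\ne g$ it is freely reduced. Assuming first that $v\in W$, Proposition~\ref{Wclosedngen} applied to $vg^{-1}\notin W$ supplies either a rightward length reducing or a leftward lex reducing sequence putting $vg^{-1}$ into $W$ in one step. Leftward lex reduction preserves length and cannot turn a non-geodesic word into a geodesic one, so the rightward option must occur. Because $v\in W$ admits no length reducing sequence of its own, the tail of the resulting rightward length reducing sequence cannot lie inside $v$ and so must be the appended letter $g^{-1}$, giving a factorisation $\alpha w_1\cdots w_kg^{-1}$ with $\l{\tau(u_k)}=g$. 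Applying the same chain of $\tau$-moves to $v$ itself, without the final free cancellation, produces $w':=\alpha u'_1\cdots u'_{k-1}\tau(u_k)$, a word of length $|v|$ equal to $v$ in $G$ with $\l{w'}=g=\l{w}$; since $|w'|=|v|_G$, the word $w'$ is geodesic, and Lemma~\ref{fellowtravel} applied to the rightward critical sequence just constructed gives $v$ and $w'$ $M$-fellow travelling.

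The main obstacle is removing the assumption $v\in W$, since Proposition~\ref{Wclosedngen} requires its input to lie in $W$. I plan to deal with this by induction on $|v|$: when $\f{v}=\f{w}$ the common first letter strips off, producing a shorter pair $v_1,w_1$ of geodesic words representing the same element to which the inductive hypothesis applies; when $\f{v}\ne\f{w}$ I would analyse the right-hand end of the reduction $v\to\rho(v)$ using Lemmas~\ref{wred1} and~\ref{wred2} to locate the critical sequence that ultimately changes the last letter's name from $a$ to $b$, and use Lemma~\ref{gju_lemma} to reabsorb the initial $h^j$ block arising in Case~(B) of Lemma~\ref{wred2}. The principal technical difficulty will be verifying that the critical sequence one extracts at the $\rho$-level can be transported back to a single rightward critical sequence acting on $v$ itself, whose tail cancels the appended $g^{-1}$ and whose application therefore gives a geodesic word $w'$ with the required properties.
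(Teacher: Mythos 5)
Your core construction is correct and is genuinely different from the paper's argument, but only in the special case $v\in W$: there, appending $g^{-1}=\l{w}^{-1}$ gives a freely reduced non-geodesic word, Case~2 of Proposition~\ref{Wclosedngen} is excluded because a leftward lex reducing sequence preserves length and lands in $W$ (hence in the set of geodesics), the tail of the forced rightward length reducing sequence must be the appended $g^{-1}$ because $v\in W$, and running the same $\tau$-moves on $v$ alone produces a geodesic $w'$ ending in $\l{w}$ that $M$-fellow travels with $v$ by Lemma~\ref{fellowtravel}. That is a clean and valid derivation of the proposition for shortlex-minimal $v$, and it is not how the paper argues even in that case.

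The genuine gap is the general case $v\notin W$, which is where essentially the entire content of the paper's proof lies, and your proposal only gestures at it. Stripping a common first letter handles $\f{v}=\f{w}$, but a geodesic $v$ with $\f{v}\ne\f{w}$ can still fail to be in $W$ because of internal non-minimality, so the induction does not bottom out in the $v\in W$ case. You cannot repair this by passing to $\rho(v)$ and composing fellow-traveller relations, since chaining degrades the constant from $M$ to a multiple of $M$. Moreover, the step you defer --- ``transporting the critical sequence extracted at the $\rho$-level back to a single rightward critical sequence on $v$'' --- is not in general achievable in the form you want: in the paper's Case~(B) (where $\rho(\alpha g)$ ends in $h^j$ with $h$ named $a$ or $b$), the witness $w'$ is assembled as $\p{w_0'}w_2'$, where $w_0'$ comes from the \emph{inductive hypothesis applied to the proper prefix} $\alpha g$ (paired with a geodesic ending in $h$) and $w_2'$ from a $\tau$-move on $h^ju$ via Lemma~\ref{gju_lemma}; it is not obtained by applying one critical sequence to $v$. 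The paper's proof also needs Proposition~\ref{twoposs}~(3) at two separate points to identify which letter the relevant prefixes can end in, and a case split on whether $|u^{(n)}|<|u^{(k)}|$; none of this is replaced by anything in your sketch. So as written the proposal establishes the proposition only for $v\in W$ and leaves the inductive step for general geodesics unproved.
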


\begin{proof}
Since $v,w$ are geodesics,
any non-trvial reduction of $v$ to $\rho(v)$, or of $w$ to $\rho(w)=\rho(v)$
must be through a leftward lex reducing sequence.
It follows from Lemma~\ref{lem2} that a leftward lex reducing sequence does
not change the set of generators involved in a word, so $v$ and $w$ involve
the same generators.

The proof is by induction on $n=|v|$.
The base of the induction is provided by the 2-generator result
Corollary~\ref{2gen_fftp}, which also allows us to assume from now on that
$v$ and $w$
involve at least three generators.

Now Proposition~\ref{twoposs}~(2) tells us that the maximal 2-generator
suffices of $v,w$ involve the same two generators. As above we call those
two generators $a,b$, and let $a$ be the name of $\l{v}$. Then the name of
$\l{w}$ is $b$;
it is distinct from the name of $\l{v}$ by Proposition~\ref{twoposs}~(1).

We need to verify the inductive step. So we assume the result holds for
pairs of geodesics of length less than $n$, and verify that it holds for
the given pair of geodesics $v,w$.

We use the following lemma.
\begin{lemma}\label{wred3}
Suppose that $v,w$ satisfy the hypothesis of Proposition~\ref{fftp},
and that the conclusion of Proposition~\ref{fftp} holds for geodesic words
shorter than $v$ and $w$. Then
if $u^{(k)}$ is equal in $G$ to a geodesic word $w_1$ that ends in $\l{w}$, 
the conclusion of  Proposition~\ref{fftp} holds for $v$ and $w$.
\end{lemma}
\begin{proof}
In both Cases (A) and (B), we have $\l{u^{(k)}}=\l{u} = \l{v} \ne \l{w}=\l{w_1}$,
and Corollary~\ref{2gen_fftp} tells us that a single $\tau$-move
can be applied to a suffix of $u^{(k)}$ to transform it to a word ending
in $\l{w}$.

In Case (A), it also follows from Corollary~\ref{2gen_fftp} that 
$u=u^{(k)}$ $M$-fellow travels with a geodesic
word $w_1'$, with $w_1'=_G u$ and $\l{w_1'}=\l{w_1}=\l{w}$.
So $\alpha gw_1'$ $M$-fellow travels with $\alpha gu=v$, and represents the 
same element of $G$ as $v$.

In Case (B), Lemma~\ref{gju_lemma} implies that a single $\tau$-move
can be applied to $hu$ to transform it to a geodesic word $w_2'$
with $\l{w_2'}=\l{w_1}=\l{w}$.
Then $hu$ $M$-fellow travels with $w_2'$ and $w_2'=_G hu$.
Since $\alpha g$ is equal in $G$ to a geodesic word ending in
$h \neq g$ and $|\alpha g| < |v|$, it follows from the hypothesis that
$\alpha g$ $M$-fellow travels with a geodesic word $w_0'$, 
with $\alpha g=_G w_0'$ and $\l{w_0'} = h$.
Now let $w' := \p{w_0'}w_2'$.  Then $\l{w'} = \l{w_2'} = \l{w}$,
\[ w' = \p{w_0'}w_2' =_G \p{w_0'}hu=w_0'u=_G \alpha gu=v, \]
and the fact that $w'$ $M$-fellow travels with $v$ is an immediate
consequence of that fact that the pairs $w_0'$, $\alpha g$ and
$w_2'$, $hu$ $M$-fellow travel.
\end{proof}

If $\l{w} = \l{\rho(v)}$, then we may assume that $w=\rho(v) = v^{(n)}$.
By Lemmas~\ref{wred1}~(3) and~\ref{wred2}~(3), $u^{(k)}$ is equal in $G$ to
a geodesic word ending in $u^{(n)}$ and the conclusion of
Proposition~\ref{fftp} follows immediately from  Lemma~\ref{wred3}.

So we assume from now on that $\l{w} \ne \l{\rho(v)}$.
We can assume (by replacing $w$ by $\rho(\p{w})\l{w}$) that a single
leftward lex reducing sequence transforms $w$ to $\rho(v)=\rho(w)$.

By Proposition~\ref{twoposs} (3), we must have $\l{\rho(v)} = \l{v}$.
We have $\rho(v) = v^{(n)} = \beta g'' u^{(n)}$ for some word $\beta$ and $g''
\in A$, where the name of $g''$ is not equal to $a$ or to $b$.

If the single leftward lex reducing sequence that reduces $w$ to
$\rho(v)$ has length 1, then $w = \beta g'' u'$ with $u' =_G u^{(n)}$.
In that case, by Lemmas~\ref{wred1}~(3) and~\ref{wred2}~(3), $u^{(k)}$ is equal in $G$ to
a geodesic word ending in $\l{w}$, and the result follows by Lemma~\ref{wred3}.

So we suppose that this sequence has length
greater than 1. Then we have $w = \gamma u'$, where $u'$ is the maximal
2-generator suffix of $w$ and $u^{(n)} = \s{\tau(u')}$. 
So $\gamma \f{\tau(u')} =_G \beta g''$.

If $|u^{(n)}| < |u^{(k)}|$, then the $g''$ in $\rho(v)$
appeared as a result of the application of a $\tau$-move during one of the
reductions from $v^{(m-1)}$ to $v^{(m)}$ for some $m>k$. This
application was of the form $v^{(m-1)}=\delta v' u'' \rightarrow \delta \tau(v') u''=v^{(m)}a$, 
where $\l{v'}$ has name $a$ or $b$, $\l{\tau(v')} = g''$,
and $u''$ is a 2-generator suffix of $v^{(m-1)}$.
Since all reductions from $v^{(m'-1)}$ to $v^{(m')}$ for $m'>m$ must consist
of a single $\tau$-move applied to $u^{(m'-1)}$, we have $u''=_G u^{(n)}$.

Then $\delta \tau(v') =_G \delta v'$.
But also \begin{eqnarray*}
 \delta v'u^{(n)}&=_G& \delta v'u''=v^{(m-1)}=_G w = \gamma u'\\
&=_G& \gamma\tau(u') = \gamma\f{\tau(u')}\s{\tau(u')}
=_G \gamma \f{\tau(u')}u^{(n)}\end{eqnarray*} 
so in fact all three of the geodesics $\delta \tau(v')$, $\delta v'$ and
$\gamma \f{\tau(u')}$ represent the same element of $G$.
The first of these has last letter $g''$ (whose name is neither $a$ nor $b$), but the second
and third end in letters with name $a$ or $b$.
So Proposition~\ref{twoposs}~(3) tells us that $\l{v'} = \f{\tau(u')}$,
and hence
\[\l{v'}u''=_G \l{v'}u^{(n)}=_G \f{\tau(u')}\s{\tau(u')} = \tau(u')=_G u'.\]
So a 2-generator suffix of $\delta v'u''$ is equal in $G$
to the 2-generator suffix $u'$ of $w$, which ends in $\l{w}$.
But the maximal 2-generator suffix of $\delta v' u''$ is $u^{(m-1)}$,
and then by
Lemmas~\ref{wred1}~(3) and~\ref{wred2}~(3), $u^{(k)}$ is equal in $G$ to
a word ending in $\l{u^{(n)}}=\l{w}$
and the result follows once again from Lemma~\ref{wred3}.

Otherwise $|u^{(n)}| = |u^{(k)}|$, so
$u^{(n)} =_G u^{(k)}$ and any non-trivial reduction of $v^{(m-1)}$ to $v^{(m)}$ for $m>k$
consists of a single $\tau$-move applied to $u^{(m-1)}$.

In Case (i), we have
$$\alpha g u^{(n)} =_G v =_G w = \gamma u' =_G \gamma \f{\tau(u')} u^{(n)},$$
so $\alpha g =_G \gamma \f{\tau(u')}$. Then, by the inductive hypothesis,
$\alpha  g$ $M$-fellow travels with a word ending in $\f{\tau(u')}$ and
$\f{\tau(u')} u^{(n)} = \tau(u')$ $M$-fellow travels with the word $u'$
ending in $\l{w}$, so the result follows.

Recall that in Case (ii) $\rho(\alpha  g) = \eta g' h^j$.
Since $\rho(v) = \beta g'' u^{(n)} =_G \beta g'' u^{(k)}$, we have
$g'=g''$ and $\beta = \eta$ in this situation.
We saw earlier that $\gamma \f{\tau(u')} =_G \beta g''$, so
$\alpha  g$ and $\gamma \f{\tau(u')} h^j$
are two geodesics representing the same group element.
Since the names of $\f{\tau(u')}$ and $h$ are both $a$ or $b$ and the name of
$g$ is neither $a$ nor $b$, Proposition~\ref{twoposs} (2) implies that $\f{\tau(u')}$
has the same name as $h$ and hence $\f{\tau(u')} = h$.
But now, since
$$h^{j+1} u = \f{\tau(u')} h^j u = \f{\tau(u')} u^{(n)} =_G
\f{\tau(u')}\s{\tau(u')} =_G u',$$
with $\l{u'}=\l{w}$, we can apply
Corollary~\ref{2gen_fftp} and Lemma~\ref{gju_lemma} to deduce that
$h^ju$ $M$-fellow travels with a word
ending in $\l{w}$, and then the result follows from Lemma~\ref{wred3}.
\end{proof}


\begin{thebibliography}{99}
\bibitem{BradyMcCammond} T Brady and J P McCammond, Three-generator Artin groups
of large type are biautomatic, J. Pure Appl. Alg. 151 (2000) 1--9. 
\bibitem{Charney} R Charney, Artin groups of finite type are biautomatic,
Math. Ann. 292 (1992) 671--683.
\bibitem{CharneyMeier} R. Charney and J. Meier, The language of geodesics
for Garside groups, Math. Z. 248 (2004) 495--509.
\bibitem{ECHLPT} D.B.A. Epstein et al., Word processing in groups, Jones and Bartlett, 1992.
\bibitem{HermillerMeier} S. Hermiller and J. Meier, Algorithms and geometry
for graph products of groups, J. Alg. 171 (1995).
\bibitem{MairesseMatheus} J. Mairesse and F. Math\'eus, Growth series for Artin
groups of dihedral type, Int. J. Alg. Comp. 16 (2006) 1087--1107.
\bibitem{NeumannShapiro}
W.D. Neumann and M.~Shapiro,
Automatic structures, rational growth, and geometrically finite
hyperbolic groups,
Invent. Math. 120 (1995) 259--287.
\bibitem{Peifer} D Peifer, Artin groups of extra-large type are biautomatic,
J. Pure Appl. Alg. 110 (1996) 15--56.
\bibitem{VanWyk} L. VanWyk, Graph groups are biautomatic, J. Pure App. Alg. 94 (1994).
\end{thebibliography}
\end{document}